 \documentclass[draft]{article}

\usepackage{amsmath,amsfonts,amsthm,amssymb,amscd,color}
\setlength{\textheight}{8in}
\setlength{\oddsidemargin}{-0.1in}
\setlength{\textwidth}{6in}
\setlength{\parindent}{0.75cm}

\binoppenalty=9999 \relpenalty=9999

\renewcommand{\Re}{\mathop{\rm Re}\nolimits}

\theoremstyle{plain} \newtheorem{theorem}{Theorem}[section]
\newtheorem{lemma}[theorem]{Lemma}
\newtheorem{proposition}[theorem]{Proposition}
\newtheorem{corollary}[theorem]{Corollary} \theoremstyle{definition}
\newtheorem{definition}[theorem]{Definition} \theoremstyle{remark}
\newtheorem{remark}[theorem]{Remark}

\newcommand{\R}{{\mathbb R}}

\newcommand{\Z}{{\mathbb Z}}

\newcommand{\N}{{\mathbb N}}

\def\im{{\rm i}}

\newcommand{\C}{\mathbb{C}}

\newcommand{\E}{\mathcal{E}}

\def\({\left(}
\def\){\right)}
\def\<{\left\langle}
\def\>{\right\rangle}

\def\cprime{$'$}
\providecommand{\bysame}{\leavevmode\hbox to3em{\hrulefill}\thinspace}
\providecommand{\MR}{\relax\ifhmode\unskip\space\fi MR }

\providecommand{\href}[2]{#2}

\numberwithin{equation}{section}

\setcounter{section}{0}
\begin{document}

\title{Existence and asymptotic stability of quasi-periodic solutions of discrete NLS with potential}

\author {Masaya Maeda}

\maketitle

\begin{abstract}
We prove the existence of a 2-parameter family of small quasi-periodic solutions of discrete nonlinear Schr\"odinger equation (DNLS).
We further show that all small solutions of DNLS decouples to one of these quasi-periodic solutions and dispersive wave.
As a byproduct, we show that all small nonlinear bound states including excited states are unstable.
\end{abstract}

\section{Introduction}
In this paper, we consider small solutions of the discrete nonlinear Schr\"odinger equation (DNLS):
\begin{align}\label{1}
\im \partial_t u=H u + |u|^{6}u, \quad u:\R\times \Z\to \C,
\end{align}
where, $H=-\Delta + V$, $\Delta$ is the discrete Laplacian:
\begin{align*}
(\Delta u)(n):=u(n+1)-2u(n) + u(n-1),
\end{align*}
and $\sum_{n\in\Z}(1+|n|^2)^{1/2}|V(n)|<\infty$.
We assume that
$\sigma_d(H)=\{e_1<e_2\}$ with  
\begin{align}\label{2}
e_1+n(e_2-e_1)\notin [0,4],\quad \forall n\in\Z,
\end{align}
where $\sigma_d(H)$ is the set of discrete spectrum of $H$.
Further, set $\phi_1$, $\phi_2$ to be the normalized real valued eigenfunctions associated to $e_1,e_2$ respectively.

The aim of this paper is to study the long time behavior of small solutions of DNLS \eqref{1}.
Before explaining our results, we briefly recall the known results for the ``continuous" nonlinear Schr\"odinger equations (NLS):
\begin{align*}
\im u_t=H_c u+|u|^2u,\quad u:\R\times \R^3\to \C.
\end{align*}
Here, we set $H_c=-\Delta+V$, $V$ is a Schwartz function and assume $\sigma_d(H)=\{e_1<e_2\}$ with $e_2<0$.
In this case, it is known that all small solutions decouple into a nonlinear bound state and dispersive wave \cite{CuMaAPDE, SW04RMP, TY02ATMP, TY02IMRN, TY02CPDE}.
Here, a nonlinear bound state is a time periodic solution with the form $e^{-\im \omega t}\phi_\omega(x)$ and a dispersive wave is a solution which tends to $0$ in $L^\infty$ (or $l^\infty$ in the discrete case) as $t\to \infty$.
In particular, since dispersive wave vanishes locally, we see that all solutions locally converges to some nonlinear bound state.
Because the linear Schr\"odinger equation has quasi-periodic  solutions such as $a_1 e^{-\im e_1 t}\phi_1 + a_2 e^{-\im e_2 t}\phi_2$, it is striking that NLS has no small quasi-periodic solutions.
The mechanism which prevent the existence of quasi-periodic solution is due to the interaction between the continuous spectrum and the discrete spectrum.
In particular, when the frequencies $e_1+n(e_2-e_1)$ hit the continuous spectrum, there is a damping from the discrete spectrum to the continuous spectrum $[0,\infty)$. 

We now come back to the discrete case.
For the discrete case there is a possibility that the frequencies $e_1+n(e_2-e_1)$ never hit the continuous spectrum since the spectrum of the discrete Laplacian is $[0,4]$.
This is assumption \eqref{2}.
In this case, there is no nonlinear interaction between the continous spectrum and the discrete spectrum.
Thus, we can expect there may exists a quasi-periodic solution.
Indeed, in this paper we show the existence of quasi-periodic solutions in the form $\Psi(z_1,z_2)\sim z_1\phi_1+z_2\phi_2$, parametrized by small complex parameters $z_1,z_2$ (see, Theorem \ref{thm:1}).
Using this family of quasi-periodic solutions, we also show that all solutions decouples into this quasi-periodic solution and dispersive wave (Theorem \ref{thm:2}).

We now prepare some notations to state our results precisely.

\begin{itemize}
\item
For $p\geq 1$, $\sigma\in\R$, we set
$l^{p,\sigma}(\Z):=\left\{ u=\{u(n)\}_{n\in\Z}\ |\ \|u\|_{l^{p,\sigma}}^p:=\sum_{n\in\Z}\<n\>^{p\sigma}|u(n)|^p<\infty\right\},$ where $\<n\>:=(1+n^2)^{1/2}$.
Further, $l^p(\Z):=l^{p,0}(\Z)$.
\item
We define the inner-product of $l^2(\Z)$ by
$
\<u,v\>:=\Re\sum_{n\in \Z}u(n)\overline{v(n)}.
$
\item
For $a\in \R$, we set
$l^a_e(\Z):=\{u=\{u(n)\}_{n\in\Z}\ |\ \|u\|_{l^a_e}^2:=\sum_{n\in\Z} e^{2a|n|}|u(n)|^2<\infty\}.
$
\item
We often write $a\lesssim b$ by meaning that there exists a constant $C$ s.t.\  $a\leq Cb$.
If we have $a\lesssim b$ and $b\lesssim a$, we write $a\sim b$.
\item
For a Banach space $X$ equipped with the norm $\|\cdot\|_X$,
we set
$
B_X(\delta):=\{u\in X\ |\ \|u\|_{X}<\delta\}.
$
\item
For Banach spaces $X,Y$, we set $\mathcal L(X;Y)$ to be the Banach space of all bounded operators from $X$ to $Y$, and $\mathcal L(X):=\mathcal L(X;X)$.
Further, we set $\mathcal L^n(X;Y)$ inductively by $\mathcal L^n(X;Y)=\mathcal L(X;\mathcal L^{n-1}(X;Y))$ and $\mathcal L^0(X;Y)=Y$.
\item
We set $C^\omega(B_X(\delta);Y)$ to be all real analytic functions from $B_X(\delta)$ to $Y$.
By real analytic functions, we mean that $f:B_X(\delta)\to Y$ can be written as $f(x)=\sum_{n\geq 0}a_n x^n$ with $\sum_{n\geq 0}\|a_n\|_{\mathcal L^n(X;Y)}r^n<\infty$ for all $r<\delta$, where $a_n\in \mathcal L^n(X;Y)$ and $a_nx^n:=a_n(x, x, \cdots, x)$.

\end{itemize}

It is well known that there exist families of nonlinear bound states of \eqref{1}.
For the convenience of the readers, we will give the proof in the appendix of this paper.

\begin{proposition}\label{prop:1}

Fix $j\in \{1,2\}$.
There exist $a_0>0$ and $\delta_0>0$ s.t.\ for all $z\in B_{\C}(\delta_0)$, there exists $\tilde e_j\in C^\omega\(B_{\R}(\delta_0^2); \R\)$ and $q_j\in C^\omega\(B_{\R}(\delta_0^2); l^{a_0}_e(\Z;\R)\)$ s.t.\ $\<\phi_j, q_j\>=0$ and 
\begin{align}\label{3}
\phi_j(z):=z\tilde\phi_j(|z|^2)=z\(\phi_j+q_j(|z|^2)\),
\end{align} satisfies
\begin{align}\label{4}
\(H-E_j(|z|^2)\)\phi_j(z)+|\phi_j(z)|^6\phi_j(z)=0,
\end{align}
where $E_j(|z|^2)=e_j+\tilde e_j(|z|^2)$.
Further, we have $|\tilde e_j(|z|^2)|+\|q_j(|z|^2)\|_{l_e^{a_0}}\lesssim |z|^6$.
\end{proposition}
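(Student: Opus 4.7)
The plan is to perform a Lyapunov--Schmidt reduction on \eqref{4} and apply the analytic implicit function theorem in an exponentially weighted space. The $U(1)$-gauge symmetry $u\mapsto e^{\im\theta}u$ of \eqref{1} together with the reality of $\phi_j$ allows me to seek $q_j$ and $\tilde e_j$ as real analytic functions of the single real parameter $\lambda:=|z|^2$. Plugging the ansatz \eqref{3} into \eqref{4}, dividing by $z$, and using that $\phi_j+q_j$ is real and $(H-e_j)\phi_j=0$, \eqref{4} reduces to
\begin{equation*}
(H-e_j)q_j - \tilde e_j(\phi_j+q_j) + \lambda^3(\phi_j+q_j)^7 = 0.
\end{equation*}

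Let $P$ denote the orthogonal projection of $l^2(\Z;\R)$ onto $\mathrm{span}\{\phi_j\}$. Taking the inner product of the displayed equation with $\phi_j$ and using $\<\phi_j,q_j\>=0$, $\|\phi_j\|=1$ yields the scalar \emph{bifurcation equation}
\begin{equation*}
\tilde e_j = \lambda^3\,\<(\phi_j+q_j)^7,\phi_j\>,
\end{equation*}
which is of order $\lambda^3$ as soon as $q_j$ is bounded. Projecting onto $\mathrm{Ran}(I-P)$ and inverting $(H-e_j)$ on that range yields the \emph{range equation}
\begin{equation*}
q_j = R_j\bigl(\tilde e_j\, q_j - \lambda^3 (I-P)(\phi_j+q_j)^7\bigr),
\end{equation*}
where $R_j:=\bigl((H-e_j)|_{\mathrm{Ran}(I-P)}\bigr)^{-1}(I-P)$. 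Substituting the bifurcation equation into the range equation gives a single real analytic equation $F(q_j,\lambda)=0$ for $q_j$ alone.

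I would carry this out in the Banach space $X:=\{u\in l^{a_0}_e(\Z;\R) : \<u,\phi_j\>=0\}$ for $a_0>0$ small. Three ingredients are needed: (i) $\phi_j\in l^{a_0}_e$, which follows from the Combes--Thomas exponential decay of bound states, valid since $e_j\notin[0,4]=\sigma_{\mathrm{ess}}(H)$; (ii) $R_j\in\mathcal L(l^{a_0}_e)$, by a Combes--Thomas conjugation estimate applied to the reduced resolvent at the simple isolated eigenvalue $e_j$; and (iii) the pointwise bound $|u(n)|\le e^{-a_0|n|}\|u\|_{l^{a_0}_e}$ makes $l^{a_0}_e$ a Banach algebra under pointwise multiplication, so $u\mapsto u^7$ is real analytic $l^{a_0}_e\to l^{a_0}_e$. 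With these, $F:X\times B_\R(\delta)\to X$ is real analytic with $F(0,0)=0$ and $D_qF(0,0)=I$, so the analytic implicit function theorem produces a unique real analytic $\lambda\mapsto q_j(\lambda)$ with $q_j(0)=0$. Reading $\tilde e_j$ off the bifurcation equation then gives $\tilde e_j\in C^\omega$, and the leading behaviour $q_j\approx -\lambda^3 R_j(I-P)\phi_j^7$ yields $\|q_j\|_{l^{a_0}_e}+|\tilde e_j|\lesssim \lambda^3=|z|^6$.

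I expect the main technical hurdle to be item (ii): showing that the reduced resolvent stays bounded between exponentially weighted spaces. This relies on the Combes--Thomas argument — conjugating $H-e_j$ by the multiplier $e^{a_0|n|}$ produces a perturbation of the discrete Laplacian of order $O(a_0)$, so for $a_0$ small enough the conjugated operator remains invertible on $\mathrm{Ran}(I-P)$ with uniform bounds, which translates into the boundedness of $R_j$ on $l^{a_0}_e$.
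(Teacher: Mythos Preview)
Your proposal is correct and follows essentially the same Lyapunov--Schmidt/implicit function theorem argument as the paper, including the identification of the key technical ingredient: the paper's Lemma~\ref{lem:a1} is precisely the Combes--Thomas conjugation estimate you describe in (ii), giving boundedness of the reduced resolvent $(H-e_j)|_{\phi_j^\perp}^{-1}$ on $l_e^{a_0}$ for small $a_0$. The only cosmetic difference is that the paper applies the implicit function theorem to $\mathcal F(q,s)=(H-e_j)q-s^3\langle f(q),\phi_j\rangle q+s^3 Qf(q)$ with $D_q\mathcal F(0,0)=H-e_j$, whereas you first invert $H-e_j$ and apply it to $F$ with $D_qF(0,0)=I$; these are equivalent.
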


\begin{remark}
Notice that if $\phi$ satisfies \eqref{4}, then $e^{-\im E_j t}\phi$ is the solution of \eqref{1}.
\end{remark}

The first result of this paper is the existence of quasi-periodic solutions of \eqref{1}.
\begin{theorem}\label{thm:1}
There exist $a_1\in(0,a_0)$ and $\delta_1\in (0,\delta_0)$ s.t.\ there exist
$
\psi\in C^\omega\(B_{\C^2}(\delta_1); l^{a_1}_e(\Z;\C)\)$
and
$
\varepsilon_j\in C^\omega\(B_{\R^2}(\delta_1^2); \R\)$ for $j=1,2$,
s.t.
\begin{align*}
\Psi(z_1,z_2):=\phi_1(z_1)+\phi_2(z_2)+\psi(z_1,z_2),
\end{align*}
is a solution of \eqref{1} if $z_j$ ($j=1,2$) satisfies
\begin{align}\label{4.0}
\im \dot z_j = \(E_j(|z_j|^2)+\varepsilon_j(|z_1|^2,|z_2|^2)\)z_j.
\end{align}
Further, for arbitrary $\theta\in\R$, we have
\begin{align}\label{4.001}
e^{\im \theta}\psi(z_1,z_2)=\psi(e^{\im \theta}z_1,e^{\im \theta}z_2),
\end{align}
and
\begin{align}
&\|\psi(z_1,z_2)\|_{l_e^{a_1}}\lesssim |z_1||z_2| (|z_1|^5+|z_2|^5)\label{4.01},\\&
|\varepsilon_j(|z_1|^2,|z_2|^2)|\lesssim |z_{3-j}|^2\(|z_1|^4+|z_2|^4\).\label{4.02}
\end{align}
\end{theorem}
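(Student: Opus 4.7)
Substituting the ansatz $\Psi = \phi_1(z_1) + \phi_2(z_2) + \psi$ into \eqref{1} and using \eqref{4} to replace $H\phi_j(z_j)$, I note that along \eqref{4.0} each $|z_j|^2$ is conserved (since $\omega_j := E_j(|z_j|^2) + \varepsilon_j$ is real), so $\partial_t\phi_j(z_j) = \dot z_j\tilde\phi_j(|z_j|^2) = -\im\omega_j\phi_j(z_j)$ and $\im\partial_t\psi = \omega_1 D_1\psi + \omega_2 D_2\psi$ with $D_j := z_j\partial_{z_j} - \bar z_j\partial_{\bar z_j}$. Collecting terms reduces the PDE to the functional equation
\begin{align*}
L\psi + \varepsilon_1\phi_1(z_1) + \varepsilon_2\phi_2(z_2) = F(z_1, z_2, \psi),
\end{align*}
where $L := \omega_1 D_1 + \omega_2 D_2 - H$ and
\begin{align*}
F(z_1, z_2, \psi) := |\Psi|^6\Psi - |\phi_1(z_1)|^6\phi_1(z_1) - |\phi_2(z_2)|^6\phi_2(z_2).
\end{align*}
The task is to solve this for $(\psi, \varepsilon_1, \varepsilon_2)$ as real-analytic functions of $(z_1, z_2)$.

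\textbf{Step 2: Mode decomposition and Lyapunov--Schmidt.} Gauge covariance \eqref{4.001} forces $\psi$ to be a formal power series in monomials $z_1^a \bar z_1^b z_2^c \bar z_2^d$ with $a-b+c-d = 1$. On each such mode $L$ acts, in the $n$-variable, as multiplication by $\mu_{a,b,c,d} - H$ with $\mu_{a,b,c,d} := (a-b)\omega_1 + (c-d)\omega_2$; at leading order this equals $e_1 + (a-b-1)(e_1-e_2)$, which by assumption \eqref{2} stays uniformly outside the continuous spectrum $[0,4]$ of $H$. The only resonances with $\sigma_d(H)$ occur for $(a-b, c-d) \in \{(1,0),(0,1)\}$, i.e.\ monomials of the form $z_1 f(|z_1|^2,|z_2|^2)$ and $z_2 g(|z_1|^2,|z_2|^2)$, where $L$ reduces to $\omega_j - H$ with kernel $\C\phi_j$. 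I will normalize $\psi$ by imposing $\<\psi(z_1,z_2),\phi_j\>=0$ on the two resonant modes, and determine $\varepsilon_j$ as the Lagrange multiplier enforcing Fredholm solvability along $\phi_j$; concretely, $\varepsilon_j$ is obtained by dividing the projection of $F$ onto $\phi_j$ in that mode by $\<\phi_j(z_j)/z_j,\phi_j\>$, which is non-degenerate near $z_j=0$ since $\phi_j(z_j)/z_j \to \phi_j$.

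\textbf{Step 3: Fixed-point iteration and smallness bounds.} A Combes--Thomas-type weighted resolvent estimate provides a uniform bound on $(\mu - H)^{-1}$ as an operator on $l^{a_1}_e(\Z; \C)$ for $a_1 > 0$ small and $\mu$ staying a fixed distance from $\sigma(H)$; this makes $L^{-1}$ well-defined and bounded off the two resonant directions. The system then becomes a fixed-point equation for $(\psi, \varepsilon_1, \varepsilon_2)$ in a small ball of $C^\omega(B_{\C^2}(\delta_1); l^{a_1}_e) \times C^\omega(B_{\R^2}(\delta_1^2); \R)^2$; real-analyticity of $F$ together with the vanishing of $F|_{\psi=0}$ to order $7$ yields contraction for $\delta_1$ small. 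Every term of $F|_{\psi=0}$ is mixed, containing at least one factor from each of $\phi_1(z_1), \phi_2(z_2)$: writing the total degree as $a+c+b+d=7$ with $a+c \geq 1$ and $b+d \geq 1$ gives $|z_1|^{a+c}|z_2|^{b+d} \lesssim |z_1||z_2|(|z_1|^5 + |z_2|^5)$, which is \eqref{4.01}. Analogously, restricting to the $(1,0)$-mode forces $a-b=1$, $c=d \geq 1$ with $b+d = 3$, so the coefficient in front of $z_1$ is $O(|z_1|^{2b}|z_2|^{2d}) \lesssim |z_2|^2(|z_1|^4 + |z_2|^4)$, yielding \eqref{4.02}.

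\textbf{Main obstacle.} The technical heart will be the uniform resolvent estimate $\|(\mu_{a,b,c,d} - H)^{-1}\|_{\mathcal{L}(l^{a_1}_e)}$ across the infinite family of modes, combined with the factorial-growth bookkeeping needed to control the $C^\omega$ norm throughout the iteration; a subsidiary issue is the self-consistent dependence of $L$ on the unknown $\varepsilon_j$, which I would handle either by nesting two fixed-point iterations (first fix $\omega_j$, solve for $\psi$, then update $\varepsilon_j$) or by invoking the analytic implicit function theorem directly on the full triple $(\psi, \varepsilon_1, \varepsilon_2)$.
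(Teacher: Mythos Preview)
Your proposal is correct and follows essentially the same route as the paper: mode decomposition under the gauge constraint, Lyapunov--Schmidt on the two resonant directions to fix $\varepsilon_j$, and a contraction using the weighted resolvent bound. For the two obstacles you flag, the paper (i) packages the infinitely many modes into a single Banach space $X_{ar}$ with norm $\sum_{j,m} r^{2m+1}\|v_{jm}\|_{l_e^a}$, reducing the $C^\omega$ bookkeeping to one contraction, and (ii) inverts only $H-\omega_{jm}$ with $\omega_{jm}=(m+1)e_j-me_{3-j}$ and moves the $\tilde e_j,\varepsilon_j$ corrections to the right, where the resulting factor of $m$ is absorbed by the $\<\omega_{jm}\>^{-1}$ decay of the resolvent in $l_e^a$; the final estimates \eqref{4.01}--\eqref{4.02} come from uniqueness of the fixed point when $z_1=0$ or $z_2=0$, rather than your direct degree count.
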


The second result of this paper is about the asymptotic behavior of small solution of \eqref{1}.

\begin{theorem}\label{thm:2}
Assume $H$ is generic (for the definition see Lemma 5.3 of \cite{CT09SIAM}).
Then, there exists $\delta_2\in (0,\delta_1)$ s.t.\ if $\|u_0\|_{l^2}<\delta_2$, then the solution of \eqref{1} with $u(0)=u_0$ exists globally in time and
there exist $z_j(t):[0,\infty)\to \C$, $\rho_{j,+}\in \R_{\geq 0}$ for $j=1,2$ and $v_+\in l^2$ s.t.
\begin{align*}
\lim_{t\to\infty}\|u(t)-\Psi(z_1(t),z_2(t))-e^{\im t \Delta}v_+\|_{l^2}= 0,\quad \lim_{t\to \infty}|z_j(t)|=\rho_{j,+},\ (j=1,2).
\end{align*}
Further, we have $\|v_+\|_{l^2}+\rho_{1,+}+\rho_{2,+}\lesssim \|u_0\|_{l^2}$.
\end{theorem}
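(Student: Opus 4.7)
My plan is to look for a modulation ansatz
\[
u(t) = \Psi(z_1(t),z_2(t)) + \eta(t),
\]
where $\eta$ is required to be symplectically orthogonal to the tangent space of the quasi-periodic manifold from Theorem~\ref{thm:1}. At $(z_1,z_2)=(0,0)$ this orthogonality reduces to $\<\eta,\phi_j\>=\<\eta,\im\phi_j\>=0$ for $j=1,2$, so the implicit function theorem gives a unique and analytic decomposition $u \mapsto (z_1,z_2,\eta)$ for $\|u\|_{l^2}$ small. Substituting into \eqref{1} and using that $\Psi$ already solves \eqref{1} when $z_j$ obeys \eqref{4.0}, I obtain a modulation ODE for $(z_1,z_2)$ coupled to an equation of the schematic form
\[
\im\partial_t\eta = H\eta + \sum_{\alpha,\beta} z^{\alpha}\bar z^{\beta}\, g_{\alpha\beta} + R(z,\eta),
\]
where $R$ is at least linear in $\eta$ and each $g_{\alpha\beta}$ inherits the exponential localization of $\phi_1,\phi_2$.

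\textbf{Normal form.} Assumption \eqref{2} (together with \eqref{4.02}) prevents $(\alpha-\beta)\cdot E(|z|^2)$ from meeting $\sigma_c(H)=[0,4]$ for any bounded $(\alpha,\beta)$ and small $z$. This is precisely the non-resonance whose violation would drive a Fermi golden rule damping between discrete and continuous spectrum. I therefore perform a finite sequence of Poincar\'e normal-form transformations $\eta \mapsto \xi$, eliminating the driving terms by solving $(H-(\alpha-\beta)\cdot E)\,h_{\alpha\beta}=g_{\alpha\beta}$ with $h_{\alpha\beta}\in l^{a_1}_e$ (the exponential weight is preserved via a Combes--Thomas type resolvent bound for $a_1$ slightly smaller than $a_0$). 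The transformation can be chosen to respect \eqref{4.001} by restricting to $(\alpha,\beta)$ with $|\alpha|-|\beta|=1$. Afterwards the $\xi$-equation contains only super-linear coupling terms $z^\alpha\bar z^\beta \xi^k$ with $k\ge 1$, and the modulation ODE for $|z_j|^2$ has a right-hand side that vanishes when $\xi\equiv 0$.

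\textbf{Dispersive bootstrap.} I close a continuity argument for $\xi$ in a combined Strichartz and weighted local-decay norm, exploiting $\|\<n\>^{-\sigma}e^{-\im Ht}P_c\|_{l^2 \to l^2_t l^2}\lesssim 1$ and discrete Strichartz bounds for $e^{-\im Ht}P_c$; these are exactly the estimates available under the genericity hypothesis as used in Lemma~5.3 of \cite{CT09SIAM}. Each coupling term $z^\alpha\bar z^\beta \xi^k$ is $l^1_t$-integrable because $\xi$ appears either weighted against the exponentially localized factor coming from $g_{\alpha\beta}$ (controlled by local decay) or raised to a high enough power to close through Strichartz. From the $l^1_t$-integrability one deduces simultaneously that $|z_j(t)|\to\rho_{j,+}$, that $\xi(t) - e^{-\im Ht}\xi_+\to 0$ in $l^2$, and the a priori size bound. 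The displayed conclusion with $e^{\im t\Delta}v_+$ then follows from the existence of the wave operator $W = \lim_{t\to\infty} e^{\im t H} e^{\im t\Delta}P_c$, which converts $e^{-\im Ht}P_c\xi_+$ into $e^{\im t\Delta}v_+$ up to an $l^2$-null term.

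\textbf{Main obstacle.} The substantive work lies in the normal-form bookkeeping: two independent rotating phases $e^{-\im E_j(|z_j|^2)t}$ generate many resonance combinations $(\alpha-\beta)\cdot E$, each requiring its own inversion of $H-(\alpha-\beta)\cdot E$ on the continuous subspace, while still preserving the gauge symmetry \eqref{4.001} and the exponential weight. One must also verify that after finitely many transformations the residual coupling is genuinely controllable by the local-decay and Strichartz estimates; by contrast, global existence and the uniqueness of the $(z,\eta)$ decomposition along the flow are straightforward once the bootstrap is in place, since on $\mathbb Z$ the nonlinearity $|u|^6u$ is bounded on $l^2$ by the trivial embedding $l^2\hookrightarrow l^\infty$.
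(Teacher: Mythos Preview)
Your decomposition and the dispersive bootstrap are in the right spirit, but the normal-form step misdiagnoses the obstruction. There are no $\eta$-independent driving terms $z^\alpha\bar z^\beta g_{\alpha\beta}$ to remove: since $\Psi$ from Theorem~\ref{thm:1} is an \emph{exact} solution of \eqref{1} whenever $z_j$ obeys \eqref{4.0}, evaluating your system at $\eta=0$ forces the modulation ODE to reduce to \eqref{4.0} and the $\eta$-equation to $0=0$. Every coupling term already carries at least one factor of $\eta$; the inversion of $H-(\alpha-\beta)\cdot E$ on $P_cl^2$ under \eqref{2} was the content of constructing $\Psi$ in Theorem~\ref{thm:1}, and there is nothing further for it to do here.

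The actual obstruction is that in the raw modulation coordinates the symplectic form is not block-diagonal in $(z,\eta)$, so the correction $\dot z_j+\im\mathcal E_j z_j$ picks up contributions from $\nabla_\eta E$ and is only \emph{linear} in a localized norm of $\eta$. Kato smoothing then puts $\tfrac{d}{dt}|z_j|^2$ in $L^2_t$ but not in $L^1_t$; your claim that each $z^\alpha\bar z^\beta\xi^k$ is $l^1_t$-integrable fails precisely for $k=1$, and you cannot conclude $|z_j(t)|\to\rho_{j,+}$. The paper fixes this not by a Poincar\'e normal form on the Hamiltonian but by a Darboux coordinate change (Lemmas~\ref{lem:11}--\ref{lem:13}) that block-diagonalizes the \emph{symplectic form} into $\Omega_0$. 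In the new coordinates $(X_K)_z$ depends only on $\partial_z K$, and since the pulled-back energy has no linear-in-$\eta$ piece (Lemma~\ref{lem:14}, a direct consequence of $\Psi$ being exact), the modulation remainder $R_j=\{z_j,\mathcal N\}$ is \emph{quadratic} in the localized norm of $\eta$ by \eqref{33}. This upgrades $\tfrac{d}{dt}|z_j|^2$ to $L^1_t$ and closes the argument.
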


\begin{remark}
Theorem \ref{thm:1} actually holds even if we replace the nonlinearity $|u|^6 u$ to $|u|^{2p}u$ for $p\in \N$.
However, for Theorem \ref{thm:2}, we need $p\geq 3$.
For simplicity, we decided only to consider the case $p=3$.
The assumption for $H$ is used for the linear estimates of $e^{\im t H}$.
See section \ref{sec:linest}.
\end{remark}

As a corollary of Theorem \ref{thm:2}, we have orbital stability of nonlinear bound states $\phi_j(z)$.
Here, for fixed $j$ and $z$, we say $\phi_j(z)$ is orbitally stable if for all $\varepsilon>0$, there exists $\delta>0$ s.t.\ if $\|u(0)-\phi_j(z)\|_{l^2}$, then $\sup_{t>0}\inf_{\theta\in\R}\|u(t)-e^{\im \theta}\phi_j(z)\|_{l^2}<\varepsilon$.

\begin{corollary}\label{cor:1}
Under the assumptions of Theorem \ref{thm:2}, for $j=1,2$ and $|z|\lesssim \delta_2$, $\phi_j(z)$ is orbitally stable.
\end{corollary}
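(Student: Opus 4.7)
The plan is to deduce orbital stability from Theorem \ref{thm:2} combined with the two conservation laws of \eqref{1}: the mass $M(u)=\|u\|_{l^2}^2$ and the Hamiltonian $E(u)=\tfrac12\langle Hu,u\rangle+\tfrac18\|u\|_{l^8}^8$. Fix $j\in\{1,2\}$ and $z$ with $|z|\le\delta_2/2$, and let $u_0$ satisfy $\|u_0-\phi_j(z)\|_{l^2}<\delta$ for $\delta$ small. Theorem \ref{thm:2} applies to $u_0$ and produces $z_1(t),z_2(t),\rho_{1,+},\rho_{2,+},v_+$ with the stated asymptotic decomposition, but the bare estimate $\rho_{k,+}+\|v_+\|_{l^2}\lesssim\|u_0\|_{l^2}\lesssim|z|+\delta$ is too weak. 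The goal is to sharpen this to $\rho_{3-j,+}+\|v_+\|_{l^2}=O(\delta^{1/2})$ and $|\rho_{j,+}-|z||=O(\delta^{1/2})$; once this is done, for $t\ge T_0$ large enough, the asymptotic decomposition together with $\phi_j(w)=w\tilde\phi_j(|w|^2)$ ($\tilde\phi_j$ real-valued) and \eqref{4.01} yields $\|u(t)-e^{\im\theta(t)}\phi_j(z)\|_{l^2}<\varepsilon$ with $\theta(t):=\arg z_j(t)-\arg z$.

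The refinement is obtained by passing to $t\to\infty$ in $M(u(t))=M(u_0)$ and $E(u(t))=E(u_0)$. Exponential localization of $\Psi(z_1(t),z_2(t))\in l_e^{a_1}$ together with $\|e^{\im t\Delta}v_+\|_{l^p}\to 0$ for every $p>2$ (by the discrete Schr\"odinger dispersive estimate and density of $l^1$ in $l^2$) makes all cross-terms and the nonlinear part of the energy of the radiation vanish. Combined with continuity of $M,E$ on $l^2$, this produces
\begin{align*}
\rho_{1,+}^2+\rho_{2,+}^2+\|v_+\|_{l^2}^2&=|z|^2+O(\delta),\\
e_1\rho_{1,+}^2+e_2\rho_{2,+}^2+\langle Hv_+,v_+\rangle&=e_j|z|^2+O(\delta),
\end{align*}
up to negligible $O(|z|^8)$ corrections from Proposition \ref{prop:1} and \eqref{4.01}. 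The combination that eliminates $\rho_{j,+}^2$ gives
\begin{align*}
(e_{3-j}-e_j)\rho_{3-j,+}^2+\langle(H-e_j)v_+,v_+\rangle=O(\delta).
\end{align*}
Assumption \eqref{2} places $e_j$ strictly outside $[0,4]$, so $H-e_j$ is definite on the continuous spectral subspace; whenever its sign agrees with that of $e_{3-j}-e_j$---which is the case when $e_j$ is the eigenvalue farther from $[0,4]$---both terms cannot cancel and each is $O(\delta)$, giving the refined asymptotic bounds immediately.

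The main obstacle is the opposite case, in which $e_j$ is the eigenvalue closer to $[0,4]$: mass and energy conservation then leave a one-parameter family of admissible asymptotic states and do not pin down $\rho_{3-j,+}$ on their own. To close the argument, one must go inside the proof of Theorem \ref{thm:2}: the modulation decomposition $u(t)=\Psi(z_1(t),z_2(t))+\eta(t)$ comes with a differential equation $\im\dot z_{3-j}=(E_{3-j}+\varepsilon_{3-j})z_{3-j}+R_{3-j}(z,\eta)$ whose error term $R_{3-j}$ is quadratic in $\eta$, since assumption \eqref{2} rules out all resonances between $e_1,e_2$ and the continuous spectrum so that the normal-form reduction used in Theorem \ref{thm:2} removes every would-be secular term. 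Consequently $\frac{d}{dt}|z_{3-j}|^2=2\Im(\bar R_{3-j}z_{3-j})$ is integrable in $t$ by the Strichartz/dispersive bounds driving Theorem \ref{thm:2}, giving $||z_{3-j}(t)|-|z_{3-j}(0)||\lesssim\delta$ uniformly in $t\ge 0$; since the initial modulation provides $|z_{3-j}(0)|\lesssim\delta$ and $\|\eta(0)\|_{l^2}\lesssim\delta$, orbital stability follows.
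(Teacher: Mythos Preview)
Your final paragraph---controlling $\tfrac{d}{dt}|z_{3-j}|^2$ via the modulation equation and the Strichartz bounds from the proof of Theorem~\ref{thm:2}---is essentially the paper's proof, and it works uniformly for both values of $j$, with no case distinction needed. The paper simply invokes Proposition~\ref{prop:4}: estimate \eqref{51} gives $\bigl||z_k(t)|^2-|z_k(0)|^2\bigr|\lesssim\epsilon^6\|\eta(0)\|_{l^2}^2$ for \emph{both} $k=1,2$, and \eqref{50} gives $\sup_t\|\eta(t)\|_{l^2}\lesssim\|\eta(0)\|_{l^2}$. Since the initial modulation yields $|z_{3-j}(0)|+\bigl||z_j(0)|-|z|\bigr|+\|\eta(0)\|_{l^2}\lesssim\delta$, these bounds give the same control for all $t\ge 0$, and orbital stability follows after undoing the Darboux change of coordinates via Lemma~\ref{lem:13}.

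The mass/energy detour you propose for the ``easy'' sign case has a genuine gap. Passing to $t\to\infty$ in the conservation laws pins down the \emph{asymptotic} quantities $\rho_{k,+}$ and $\|v_+\|_{l^2}$, but orbital stability is a uniform-in-$t$ statement: you also need $|z_{3-j}(t)|$ and $\|\eta(t)\|_{l^2}$ small for every $t$, not just in the limit. Your phrase ``for $t\ge T_0$ large enough'' hides that $T_0$ depends on the rate of convergence in Theorem~\ref{thm:2}, hence on the individual $u_0$; patching $[0,T_0]$ by continuous dependence would then require $T_0$ to be bounded uniformly over all admissible $u_0$, which you have not shown. (A smaller slip: the quadratic form that survives in the energy limit is $\langle(-\Delta-e_j)v_+,v_+\rangle$, not $\langle(H-e_j)v_+,v_+\rangle$, since the $V$-contribution vanishes as $e^{\im t\Delta}v_+\to 0$ in $l^\infty$; fortunately $\sigma(-\Delta)=[0,4]$, so your sign analysis is unaffected.) The simplest repair is to drop the conservation-law step altogether and run your final-paragraph argument in all cases---at which point you have recovered exactly the paper's proof.
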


\begin{remark}
For the case $e_1<0$, one can show $\phi_1(z)$ is trapped by the energy (i.e.\ $\phi_1(z)$ is a minimizer of the energy (given in \eqref{9.01}) under the constraint $\|u\|_{l^2}=\|\phi_1(z)\|_{l^2}$).
By a classical argument by Cazenave-Lions \cite{CL82}, one can conclude that $\phi_1(z)$ is orbitally stable.
Similarly, if $e_2>4$, $\phi_2(z)$ is a maximizer of the energy under the constraint $\|u\|_{l^2}=\|\phi(z)\|_{l^2}$, and we can show it is orbitally stable.
However, the case $e_1<e_2<0$ is interesting.
In this case $\phi_2(z)$ is not trapped by the energy, which means that $\phi_2(z)$ is not a minimizer (nor a maximizer) of energy $E$ under the constraint $\|u\|_{l^2}=\|\phi(z)\|_{l^2}$.
Therefore, in this case one cannot show the orbital stability by variational methods.
\end{remark}

We now recall the known results related to our results on continuous and discrete NLS.
There is a long list of papers on asymptotic stability of both large and small nonlinear bound states of NLS \cite{Bambusi13CMP,BP92StP,BP92Ast,BP92AA,BP95,BS03AIHPN,Cuccagna01CPAM,Cuccagna03RMP,Cuccagna06JDE,Cuccagna08JDE,Cuccagna11CMP,Cuccagna14TAMS,
CuMaAPDE,CM08CMP,CP14AA,CT09AIHPN,GS05RMP,GNT04,KM09JFA,KZ07CMP,KZ09JDE,Koo11JDE,
Miz07JMKU,Miz08JMKU,
Perelman04CPDE,PW97JDE,SW90CMP, SW92JDE,SW04RMP,TY02ATMP,TY02IMRN,TY02CPDE}.

The asymptotic stability for small nonlinear bound states of NLS was first proved by Soffer-Weinstein \cite{SW90CMP}.
They assume that the Schr\"odinger operator $H_c=-\Delta+V$ has exactly one eigenvalue and the initial data is small in some weighted space.
Later, Gustafson-Nakanishi-Tsai \cite{GNT04} proved the asymptotic stability in the energy space $H^1$ for the $3$ dimensional case.
One of the main tool of \cite{GNT04} was the endpoint Strichartz estimate \cite{GoSc04,KT98AJM} which collapse in the $1$ and $2$ dimensional cases.
For $1$ and $2$ dimensional cases, Mizumachi \cite{Miz07JMKU,Miz08JMKU} prove the asymptotic stability result in the energy space by replacing the endpoint Strichartz estimates to Kato type smoothing estimates.
The results \cite{GNT04, Miz07JMKU,Miz08JMKU} tells us that, under the assumption that $H_c$ has exactly one eigenvalue, the dynamics of small solutions of NLS is similar to the linear Schr\"odinger equation .
This is because the solution of linear Schr\"odinger equation also decomposes to a periodic solution associated to the eigenvalue and dispersive wave associated to the absolutely continuous spectrum.

The situation changes drastically when $H_c$ has more than two eigenvalues.
Indeed, in this case there exist quasi-periodic solutions of the linear Schr\"odinger equation associated to the two eigenvalues of $H_c$.
However, \cite{Sigal93CMP} proved that there exists no small quasi-periodic solution of NLS.
Further, \cite{SW04RMP}, \cite{TY02IMRN}, \cite{TY02CPDE}, \cite{TY02ATMP} proved that if $H_c$ has two eignvalues with $e_1<2e_2$, all small solutions in some weighted space decomposes to a nonlinear bound state and dispersive wave.
Recently, \cite{CuMaAPDE} extended these result to the case $H_c$ has more than two eigenvalues and removed the assumption $e_1<2e_2$.
See also related results for nonlinear Klein-Gordon equation (NLKG) \cite{BC11AJM,CuMaP,SW99IM} and nonlinear Dirac equation \cite{CTpreDirac}.
The mechanism which prevents the existence of quasi-periodic solutions is the nonlinear interaction between the eigenvalue and the absolutely continuous spectrum.
The non-degeneracy condition for such interaction is called Fermi Golden Rule (FGR) which all the above papers assume.

We now turn to the known results of DNLS.
For the case that the discrete Schr\"odinger operator $H$ has only one eigenvalue, \cite{CT09SIAM,KPS09SIAM} proved the asymptotic stability result in the energy space $l^2$.
See also \cite{MP12DCDS} for asymptotic stability results in weighted space for lower power nonlinearity.
This result corresponds to the continuous case.
However, for the case $H$ has two eigenvalues with $e_1<0<4<e_2$, \cite{Cuccagna10DCDS} proved that the ground state (which is $\phi_1(z)$ in Proposition \ref{prop:1}) is orbitally stable but not asymptotically stable.
For the continuous case, ground state is asymptotically stable, so this result shows that in this case the small solution of continuous and discrete NLS has different asymptotic dynamics.
As mentioned in \cite{Cuccagna10DCDS}, the situation that the nonlinear bound state is orbitally stable but not asymptotically stable suggests that there exist quasi-periodic solutions.
Indeed, Theorem \ref{thm:1} shows that there exists a $2$-parameter family of quasi-periodic solutions which bifurcates from the two eigenvalues of $H$.
Note that the fact that the standing wave is not asymptotically stable is a direct consequence of the existence of quasi-periodic solution near standing waves.

Up to here, we have only discussed the nonlinear bound states and quasi-periodic solutions which bifurcate from the eigenvalues of the Schr\"odinger operator.
We note that it is known that there exist different kinds of periodic and quasi-periodic solutions for DNLS (mainly considered in the translation invariant case, i.e.\ $V\equiv 0$).
First, if the nonlinearity is attractive, there exists a nonlinear bound state which can be approximated by the nonlinear bound state of the continuous NLS \cite{BP10N} (See also \cite{BPP10AA}).
Second, in the ``anticontinous limit" (which is the situation we are putting $\varepsilon\ll1$ in front of $\Delta$), there exists quasi-periodic solutions (See for example \cite{Bambusi13CMP, JA97N, MA94N}).
After rescaling, the quasi-periodic solutions of this kind will have large amplitude.

We prove the existence of the quasi-periodic solutions starting from assuming that the quasi-periodic solution can be written as $\sum_{n\in \Z}e^{-\im (\mathcal E_1 + n(\mathcal E_2-\mathcal E_1))t}v_n$, where $\mathcal E_j\sim e_j$ and solve \eqref{1} for each frequency.
Notice that the frequencies $\{ \mathcal E_1 + n (\mathcal E_2-\mathcal E_1)\}_{n\in \Z}$ are generated from the two standing waves and the nonlinearity.
Further, there is no intersection between these frequencies and the continuous spectrum of $H$ because of \eqref{2}.
This assumption is crucial for the existence of quasi-periodic solution.
Indeed, for the continuous NLS case, condition \eqref{2} always fails because the continuous spectrum is $[0,\infty)$.
Then, by the nonlinear interaction, we have a damping from the point spectrum to the continuous spectrum which prevents the existence of the quasi-periodic solutions.
By the same reason, we conjecture that for the case $H$ has more than $3$ eigenvalues there will be no quasi-periodic solution like
\begin{align*}
\Psi(z_1,z_2,z_3)\sim z_1\phi_1+z_2\phi_2+z_3\phi_3.
\end{align*}
This is because the nonlinear interaction between the point spectrum and absolutely continuous spectrum arises again and there will be a damping.

For the asymptotic stability result Theorem \ref{thm:2}, we start from a standard modulation argument and adapt the nonlinear coordinate given in \cite{GNT04}.
However, since our quasi-periodic solution is not a standing wave, it seems to be difficult to get a simple equations for the modulation parameters in this coordinate.
To overcome this difficulty,
we use the Darboux theorem which was introduced in \cite{Cuccagna11CMP} and used in \cite{Bambusi13CMP, CuMaAPDE,CM14JDE, CTpreDirac}.
In fact, after changing the coordinates by the Darboux theorem, we get a well decoupled equations (see \eqref{40}, \eqref{41}) which are easy to analyze.
We note that although we have made the change of coordinate with a real analytic regularity, we actually need only $C^3$.
The real analyticity comes from the real analyticity of the nonlinearity.
Therefore, for the asymptotic stability, we do not need real analyticity.
However, for the existence of the quasi-periodic solution, we can only handle a polynomial nonlinearity because we have expanded the solution as $\sum_{n\in \Z}e^{-\im (\mathcal E_0 + n(\mathcal E_1-\mathcal E_0))t}v_n$.
Further, real analyticity reduces the amount of some computations for the estimate of the derivatives of the coordinate change (see Lemma \ref{lem:15}).
These are the reasons why we have adapted the real analytic framework for the change of coordinate.

We now mention about the difference between the proof of \cite{Bambusi13CMP} whcih shows the asymptotic stability of periodic solutions obtained by the anti-continuous limit (which is a large solutions) and the proof of Theorem \ref{thm:2}.
The difference is that \cite{Bambusi13CMP} uses the normal form argument infinite times (the Birkhoff normal form).
For this method, it is necessary to have the analyticity of the nonlinear term for the convergence of the normal form steps.
On the other hand, we only use the normal form argument (the Darboux theorem) once.
As mentioned before, our argument only requires $C^3$ regularity for the coordinate change so it is not necessary to have a analytic nonlinearity for the proof of asymptotic stability.
However, we need the nonlinearity to be polynomial for the proof of the existence of the quasi-periodic solution.

The paper is organized as follows:
In section \ref{sec:existence}, we prove Theorem \ref{thm:1}.
In section \ref{sec:asymp}, following \cite{GNT04}, we set up the nonlinear coordinate.
In section \ref{sec:darboux}, we prove the Darboux theorem and rewrite DNLS in the new coordinate, the new system is given in \eqref{40}-\eqref{41}.
In section \ref{sec:linest}, we introduce the linear estimates which were originally given in \cite{CT09SIAM} and in section \ref{sec:proofthm2}, we prove Theorem \ref{thm:2}.
In the appendix we give the proof of Proposition \ref{prop:1}, Lemma \ref{lem:2.0}.


\section{Proof of Theorem \ref{thm:1}}\label{sec:existence}

In this section, we construct solutions of \eqref{1} under the following ansatz:
\begin{align}
\Psi(z_1,z_2)=\phi_1(z_1)+\phi_2(z_2)+\sum_{m\geq 0}\( z_1^{m+1}\overline{z_2}^mv_{1m}+ \overline{z_1}^mz_2^{m+1}  v_{2m}\),\label{4.1}
\end{align}
where, $v_{1m},v_{2m}$ are real valued and $\<v_{j0},\phi_j\>=0$ for $j=1,2$.
\begin{remark}
Notice that if $v_{jm}=v_{jm}(|z_1|^2,|z_2|^2)$, then we have $\Psi(e^{\im \theta}z_1,e^{\im \theta}z_2)=e^{\im \theta}\Psi(z_1,z_2)$.
\end{remark}
Since we want to reduce the problem of construction of quasi-periodic solution to the construction of solution of system elliptic equations, we assume that for $\varepsilon_j\in \R$ given below (see, \eqref{6}), $z_j$ ($j=1,2$) satisfies
\begin{align}
\im \dot z_j = \mathcal E_j z_j,
\end{align}
where $\mathcal E_j=E_j(|z_j|^2)+\varepsilon_j=e_j+\tilde e_j(|z_j|^2)+\varepsilon_j$.
Then, we have
\begin{align*}
\im \partial_t \Psi(z_1,z_2)=&\sum_{j=1,2}\mathcal E_jz_j\(\tilde \phi_j(|z_j|^2)+v_{j0}\)\\&
+\sum_{m\geq 1}z_1^{m+1}\overline{z_2}^m\((m+1)\mathcal E_1-m \mathcal E_2\)v_{1m}+\overline{z_1}^mz_2^{m+1}\((m+1)\mathcal E_2-m \mathcal E_1\)v_{2m},\\
H\Psi(z_1,z_2)=&\sum_{j=1,2}z_j\(H\tilde\phi_j(|z_j|^2)+H v_{j0}\)+\sum_{m\geq 1}\(z_1^{m+1}\overline{z_2}^m H v_{1m} +  \overline{z_1}^mz_2^{m+1} H v_{2m}\),
\end{align*}
where $\tilde \phi_j(|z_j|^2)$ is given in \eqref{3}.
Further, for $\mathbf v=\{v_{j,m}\}_{j=1,2,m\geq 0}$, we have
\begin{align*}
|\Psi(z_1,z_2)|^6\Psi(z_1,z_2)=|\phi_1(z_1)|^6\phi_1(z_1)+|\phi_2(z_2)|^6\phi_2(z_2)+\mathcal N(|z_1|^2,|z_2|^2, \mathbf v),
\end{align*}
where
\begin{align}
\mathcal N(|z_1|^2,|z_2|^2, \mathbf v)=\sum_{m\geq 0}z_1^{m+1}\overline{z_2}^m N_{1m}(|z_1|^2,|z_2|^2,\mathbf v)
+\sum_{m\geq 0}\overline{z_1}^mz_2^{m+1}  N_{2m}(|z_1|^2,|z_2|^2,\mathbf v),\label{4.2}
\end{align}
for some $\{N_{jm}\}_{j=1,2,m\geq 0}$ (see Lemma \ref{lem:2.0} below).
Therefore, to construct a solution of \eqref{1} in the form \eqref{4.1}, it suffices to solve the system of  equations of the coefficients of $z_1^{m+1}\overline{z_2}^m$ and $\overline{z_1}^mz_2^{m+1}$.
In particular, we solve the system of elliptic equations
\begin{align}\label{5}
\(H-\omega_{jm}\)v_{jm}=&\delta_{0m}\varepsilon_j\tilde \phi_j(|z_j|^2)+\delta(j) m(\tilde e_1(|z_1|^2)-\tilde e_2(|z_2|^2)+\varepsilon_1-\varepsilon_2)v_{jm}\nonumber\\&+(\tilde e_j(|z_j|^2)+\varepsilon_j)v_{jm} -N_{jm},
\end{align}
where $\delta_{0m}=1$ if $m=0$ and $0$ otherwise, $\delta(1)=1$, $\delta(2)=-1$ and $\omega_{1m}=(m+1)e_1-m e_2$ and $\omega_{2m}=(m+1)e_2-me_1$.
Let $Q_jv=\<v,\phi_j\>\phi_j$.
By applying $Q$ to \eqref{5} with $m=0$ , we have
\begin{align}\label{6}
\varepsilon_j=\varepsilon_j(|z_1|^2,|z_2|^2,\mathbf v)=\<N_{j0}(|z_1|^2,|z_2|^2,\mathbf v),\phi_j\>.
\end{align}
Therefore, it suffices to solve
\begin{align}
(H-\omega_{jm})v_{jm}=\delta_{0m}\varepsilon_j q_j+\delta(j) m(\tilde e_1-\tilde e_2+\varepsilon_1-\varepsilon_2)v_{jm}+(\tilde e_j+\varepsilon_j)v_{jm} -(1-\delta_{0m}Q_j)N_{jm},\label{6.0}
\end{align}
where $\varepsilon_j$ is now given by \eqref{6}.

Since we want to solve the system \eqref{6.0} by fixed point argument, we define a function space $X_{ar}$ for $a,r>0$ by
\begin{align*}
&X_{ar}:=\{\mathbf v=\{v_{jm}\}_{j=1,2,m\geq 0}\ |\ v_{jm}\in l_e^a,\ \|\mathbf v\|_{ar}:=\sum_{j=1,2,m\geq 0}r^{2m+1}\|v_{jm}\|_{l_e^a}<\infty\}.
\end{align*}
For $\mathbf v=\{v_{jm}\}_{j=1,2,m\geq 0}$, we set
\begin{align*}
\mathcal P \mathbf v:=\{(1-\delta_{0m}Q_j)v_{jm}\}_{j=1,2,m\geq 0},
\end{align*}
and define $X_{ar}^c:=\mathcal P X_{ar}$.
We next define the operator $\mathcal A, \mathcal B$
 on $X_{ar}^c$ by
\begin{align*}
\mathcal A \mathbf v&=\{(H-\omega_{jm})^{-1}v_{jm}\}_{j=1,2,m\geq 0},\\
\mathcal B \mathbf v&=\{\delta(j)m(H-\omega_{jm})^{-1}v_{jm}\}_{j=1,2,m\geq 0},
\end{align*}
where
$\mathbf v=\{v_{jm}\}_{j=1,2,m\geq 0}$.
Notice that $(H-e_j)$ are invertible on $(1-Q_j)l_e^a$ (see Lemma \ref{a1}).

\begin{lemma}\label{lem:3}
For sufficiently small $a>0$, we have
$\mathcal A,\mathcal B \in \mathcal L(X_{ar}^c,X_{ar}^c)$.
\end{lemma}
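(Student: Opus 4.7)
The plan is to conjugate by the exponential weight to reduce to resolvent estimates on $l^2$, then exploit hypothesis \eqref{2} to extract uniform-in-$m$ decay. First I would introduce the multiplication isometry $U_a \colon l^2 \to l_e^a$ defined by $(U_a f)(n) := e^{-a|n|} f(n)$ and compute
\begin{align*}
\tilde H_a := U_a^{-1} H U_a = -\Delta + V + R_a,
\end{align*}
where $R_a$ is the explicit conjugation remainder. Since $|n|-|n\pm 1| \in \{-1,0,1\}$, one checks that $R_a$ is bounded on $l^2$ with $\|R_a\|_{\mathcal L(l^2)} = O(a)$. Thus the spectrum of $H$ acting on $l_e^a$ coincides with the spectrum of $\tilde H_a$ on $l^2$, which for small $a$ is a small deformation of $\sigma(H) = [0,4] \cup \{e_1, e_2\}$: the essential spectrum deforms into a narrow complex neighborhood of $[0,4]$, while the simple eigenvalues $e_1, e_2$ persist (with eigenfunctions $U_a^{-1}\phi_j$, which lie in $l^2$ by Proposition \ref{prop:1} once $a < a_0$).

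Next I would use assumption \eqref{2} to locate the $\omega_{jm}$. Writing $\omega_{1m} = e_1 - m(e_2 - e_1)$ and $\omega_{2m} = e_1 + (m+1)(e_2 - e_1)$, these are numbers of the form $e_1 + k(e_2 - e_1)$ with $k \in \mathbb Z \setminus \{0,1\}$ for $m \geq 1$, uniformly separated from $[0,4]$ and from $\{e_1, e_2\}$, and with $|\omega_{jm}| \gtrsim 1+m$. Choosing $a$ small enough that the deformed essential spectrum remains in a fixed neighborhood of $[0,4]$, standard contour estimates give
\begin{align*}
\|(H - \omega_{jm})^{-1}\|_{\mathcal L(l_e^a)} \lesssim \frac{1}{1+m} \quad (m \geq 1), \qquad \|(H - e_j)^{-1}\|_{\mathcal L((1-Q_j) l_e^a)} \lesssim 1,
\end{align*}
the latter by Lemma \ref{a1}. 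Note that $Q_j$ extends boundedly to $l_e^a$ because $\phi_j \in l_e^{a_0}$ with $a < a_0$, and the symmetry of $H$ under the $l^2$ pairing gives $(H-e_j)^{-1}\bigl((1-Q_j)l_e^a\bigr) \subset (1-Q_j)l_e^a$, so the image of $\mathcal A$ and $\mathcal B$ really does lie in $X_{ar}^c$.

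Finally, summing against the weights $r^{2m+1}$ defining $\|\cdot\|_{ar}$ yields
\begin{align*}
\|\mathcal A \mathbf v\|_{ar} \lesssim \sum_{j,m} \frac{r^{2m+1}}{1+m}\|v_{jm}\|_{l_e^a} \leq C\|\mathbf v\|_{ar}, \qquad \|\mathcal B \mathbf v\|_{ar} \lesssim \sum_{j,m} \frac{m\, r^{2m+1}}{1+m}\|v_{jm}\|_{l_e^a} \leq C\|\mathbf v\|_{ar},
\end{align*}
using the trivial bound $m/(1+m) \leq 1$, which gives the conclusion. I expect the main obstacle to be the spectral step: verifying that for all sufficiently small $a$ the essential spectrum of $\tilde H_a$ stays in a region whose distance to every $\omega_{jm}$ is at least $c(1+m)$, uniformly in $m$. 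This is plausible since after Fourier conjugation $-\Delta$ becomes the multiplier $2 - 2\cos\xi$, and the shift $\xi \mapsto \xi + ia$ after conjugation by $U_a$ deforms the symbol holomorphically in $a$, so the essential spectrum moves analytically and remains at bounded distance from the discrete family $\{\omega_{jm}\}$. Once this spectral input is established, the lemma is just a convergent geometric-type summation.
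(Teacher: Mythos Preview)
Your approach is correct and essentially identical to the paper's: the paper's proof is a one-liner citing the appendix Lemma~\ref{lem:a1}, whose proof is precisely your conjugation argument $U_a^{-1} H U_a = H + R_a$ with $\|R_a\|_{\mathcal L(l^2)} = O(a)$ followed by a Neumann-series perturbation (what you call ``standard contour estimates''), together with assumption~\eqref{2} to place the $\omega_{jm}$ uniformly away from $\sigma(H)$ with $|\omega_{jm}|\gtrsim 1+m$. The only cosmetic difference is that the paper packages this into an appendix lemma rather than inlining it, and uses the self-adjoint $l^2$ resolvent bound plus Neumann series directly instead of speaking of the deformed complex essential spectrum of $\tilde H_a$.
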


\begin{proof}
As written above, $\mathcal A$ and $\mathcal B$ are well defined on $X_{ar}^c$.
By Lemma \ref{lem:a1}, we have $$\|(1+m)(H-\omega_{jm})^{-1}v_{jm}\|_{l_e^a}\lesssim \|v_{jm}\|_{l_e^a}.$$
Therefore, we see that $\mathcal A$ and $\mathcal B$ are bounded on $X_{ar}^c$.
\end{proof}

Using the above notations, we can rewrite \eqref{6.0} as
\begin{align}\label{6.01}
\mathbf v = \mathbf \Phi (|z_1|^2,|z_2|^2,\mathbf v),
\end{align}
where
\begin{align}\label{6.02}
&\mathbf \Phi(|z_1|^2,|z_2|^2,\mathbf v)= \mathcal A \mathbf q(|z_1|^2,|z_2|^2,\mathbf v)+\sum_{l=1,2}\(\tilde e_l(|z_l|^2)+\varepsilon_l(|z_1|^2,|z_2|^2,\mathbf v)\)\mathbf 1_l \mathcal A \mathbf v \\&\quad+ \(\tilde e_1(|z_1|^2)-\tilde e_2(|z_2|^2)+\varepsilon_1(|z_1|^2,|z_2|^2,\mathbf v)-\varepsilon_2(|z_1|^2,|z_2|^2,\mathbf v)\)\mathcal B \mathbf v-\mathcal A \mathcal P \mathcal N(|z_1|^2,|z_2|^2,\mathbf v),\nonumber
\end{align}
where $\mathbf q(|z_1|^2,|z_2|^2,\mathbf v) =\{\delta_{0m}\varepsilon_j(|z_1|^2,|z_2|^2,\mathbf v) q_j(|z_j|^2)\}_{j=1,2,m\geq 0}$ and $\mathbf 1_l \mathbf v=\{\delta_{lj}v_{jm}\}_{j=1,2,m\geq 0}$.

%
%

%
%
%

To express $\mathcal N$, we introduce the following multilinear operator on $X_{ar}$.
\begin{definition}
For $\mathbf v^k=\{v_{jm}^k\}_{j=1,2,m\geq 0}$, $k=1,2,3$, we define $\mathcal M(|z_1|^2,|z_2|^2,\mathbf v^1,\mathbf v^2,\mathbf v^3)=\{M_{jm}(|z_1|^2,|z_2|^2,\mathbf v^1,\mathbf v^2,\mathbf v^3)\}_{j=1,2,m\geq 0}$ by the relation
\begin{align}
&\sum_{m\geq 0}\( z_1^{m+1}\overline{z_2}^mM_{1m}+ \overline{z_1}^mz_2^{m+1}  M_{2m}\)=
\sum_{m_1\geq 0}\( z_1^{m_1+1}\overline{z_2}^{m_1}v_{1m_1}^1+ \overline{z_1}^{m_1}z_2^{m_1+1}  v_{2m_1}^1\)\nonumber\\&
\quad\quad \times\overline{\sum_{m_2\geq 0}\( z_1^{m_2+1}\overline{z_2}^{m_2}v_{1m_2}^2+ \overline{z_1}^{m_2}z_2^{m_2+1}  v_{2m_2}^2\)}\sum_{m_3\geq 0}\( z_1^{m_3+1}\overline{z_2}^{m_3}v_{1m_3}^3+ \overline{z_1}^{m_3}z_2^{m_3+1}  v_{2m_3}^3\).\label{6.03}
\end{align}
We inductively define $\mathcal M_{2k+1}(|z_1|^2,|z_2|^2,\mathbf v_1,\cdots,\mathbf v_{2k+1})$ for $k\geq 1$ by $\mathcal M_3=\mathcal M$ and
\begin{align*}
\mathcal M_{2k+1}(|z_1|^2,|z_2|^2,\mathbf v^1,\mathbf v^2,\mathbf v^{3})&=\mathcal M(|z_1|^2,|z_2|^2,\mathbf v^1,\mathbf v^2,\mathcal M_{2k-1}(|z_1|^2,|z_2|^2,\mathbf v^{3},\cdots,\mathbf v^{2k+1})),
\end{align*}
and
$\mathcal M_{2k+1}(|z_1|^2,|z_2|^2,\mathbf v):=\mathcal M_{2k+1}(|z_1|^2,|z_2|^2,\mathbf v,\cdots,\mathbf v)$.
\end{definition}

\begin{lemma}\label{lem:2.0}
Let $\delta< r$.
Then, we have $\mathcal M_{2k+1}\in C^\omega(B_{\R^2}(\delta^2);\mathcal L^{2k+1}(X_{ar};X_{ar}))$ and
\begin{align*}
\sup_{(z_1,z_2)\in B_{\C^2}(\delta)}\|\mathcal M_{2k+1}(|z_1|^2,|z_2|^2,\cdot)\|_{\mathcal L^{2k+1}(X_{ar};X_{ar})}\lesssim 1.
\end{align*}
\end{lemma}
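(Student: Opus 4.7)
The strategy is to treat the base case $\mathcal M_3=\mathcal M$ by direct algebraic computation and then iterate to obtain $\mathcal M_{2k+1}$ by induction. For the base case I would expand the triple product
\begin{align*}
w_1(z_1,z_2)\overline{w_2(z_1,z_2)}w_3(z_1,z_2),\quad w_k:=\sum_{m\geq 0}\(z_1^{m+1}\overline{z_2}^m v_{1m}^k + \overline{z_1}^m z_2^{m+1}v_{2m}^k\),
\end{align*}
as a formal power series in $z_1,\overline{z_1},z_2,\overline{z_2}$. Each resulting monomial has the form $z_1^a\overline{z_1}^b z_2^d\overline{z_2}^c\cdot v^1_{j_1m_1}v^2_{j_2m_2}v^3_{j_3m_3}$, and the gauge covariance $w_k\mapsto e^{\im\theta}w_k$ under $z_j\mapsto e^{\im\theta}z_j$ forces $(a-b)-(c-d)=1$. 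A short case analysis in the signs of $s:=a-b$ and $u:=c-d$ shows that, after factoring out $|z_1|^{2\min(a,b)}|z_2|^{2\min(c,d)}$, the remaining monomial reduces either to $z_1^{m+1}\overline{z_2}^m$ or to $\overline{z_1}^m z_2^{m+1}$ for suitable $m\geq 0$. Grouping, $M_{jm}$ emerges as a formal power series in $(|z_1|^2,|z_2|^2)$ whose coefficients are trilinear in $(\mathbf v^1,\mathbf v^2,\mathbf v^3)$.

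For the norm bound I would use that $l^a_e$ is a Banach algebra: since $\|v\|_{l^\infty}\leq\|v\|_{l^a_e}$ one has $\|uv\|_{l^a_e}\leq\|u\|_{l^a_e}\|v\|_{l^a_e}$. Thus each term in the expansion of $M_{jm}$ contributes at most $|z_1|^{2p}|z_2|^{2q}\|v^1_{j_1m_1}\|_{l^a_e}\|v^2_{j_2m_2}\|_{l^a_e}\|v^3_{j_3m_3}\|_{l^a_e}$, and for $|z_j|\leq\delta$ we get
\begin{align*}
\|\mathcal M(|z_1|^2,|z_2|^2,\mathbf v^1,\mathbf v^2,\mathbf v^3)\|_{ar}=\sum_{j,m}r^{2m+1}\|M_{jm}\|_{l^a_e}\leq \sum r^{2m+1}\delta^{2(p+q)}\|v^1\|\|v^2\|\|v^3\|,
\end{align*}
where the right-hand sum ranges over the six-tuples $(j_1,j_2,j_3,m_1,m_2,m_3)$. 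In each of the eight $(j_1,j_2,j_3)$-subcases the indices $(m,p,q)$ are explicit affine functions of $(m_1,m_2,m_3)$, and a short computation using the hypothesis $\delta<r$ reorganizes the sum into a product $\|\mathbf v^1\|_{ar}\|\mathbf v^2\|_{ar}\|\mathbf v^3\|_{ar}$ multiplied by a convergent geometric factor in $(\delta/r)^2$. This gives $\|\mathcal M(|z_1|^2,|z_2|^2,\cdot)\|_{\mathcal L^3(X_{ar};X_{ar})}\lesssim 1$ uniformly on $B_{\C^2}(\delta)$.

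For the induction step, the recursion $\mathcal M_{2k+1}(\cdot,\cdot,\mathbf v^1,\ldots,\mathbf v^{2k+1})=\mathcal M(\cdot,\cdot,\mathbf v^1,\mathbf v^2,\mathcal M_{2k-1}(\cdot,\cdot,\mathbf v^3,\ldots,\mathbf v^{2k+1}))$ combined with the base-case bound gives boundedness and the inductive norm estimate. Real analyticity in $(|z_1|^2,|z_2|^2)\in B_{\R^2}(\delta^2)$ is then immediate from the representation of each $M_{jm}$ as a convergent power series in $|z_1|^2,|z_2|^2$ with coefficients in $\mathcal L^{2k+1}(X_{ar};X_{ar})$, the uniform operator-norm bound providing the geometric control on the radius of convergence. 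I expect the main obstacle to be the bookkeeping in the base case: one must correctly enumerate the eight $(j_1,j_2,j_3)$-subcases, verify in each that the factoring procedure lands in one of the two canonical forms, and track the indices so that the triple sum over $(m_1,m_2,m_3)$ factors into three single sums matching $\|\mathbf v^k\|_{ar}$. Once this combinatorics is in place, both the norm estimate and the real-analytic regularity follow in a standard way.
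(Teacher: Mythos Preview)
Your proposal is correct and follows essentially the same route as the paper's proof: expand the triple product $w_1\overline{w_2}w_3$, use the gauge constraint to show every monomial factors as $|z_1|^{2p}|z_2|^{2q}$ times one of the two canonical forms, use the algebra property of $l^a_e$ pointwise, and then check that the resulting sum over $(m,p,q,m_1,m_2,m_3)$ reorganizes into $\prod_k\|\mathbf v^k\|_{ar}$ times a geometric series in $(\delta/r)^2$. The paper simply carries this out in full by writing the fourteen explicit sums for $M_{1m}$ and $M_{2m}$ and grouping them as $\mathcal M=\tilde{\mathbf m}^{00}+\sum_{l\geq 0}\sum_{|l_1-l_2|\leq 2}|z_1|^{2l_1}|z_2|^{2l_2}\mathbf m^{l_1l_2}$ with $\|\mathbf m^{l_1l_2}\|_{ar}\lesssim r^{-2(l_1+l_2)}\prod_k\|\mathbf v^k\|_{ar}$; your more conceptual description via the case analysis on the signs of $s=a-b$ and $u=c-d$ is exactly the mechanism behind those formulas. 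One small clarification: the eight $(j_1,j_2,j_3)$-cases do not each land in a single canonical form --- most of them split further according to the sign of a linear combination of $m_1,m_2,m_3$ (this is your ``case analysis in the signs of $s$ and $u$''), which is why the paper's explicit list has seven terms in each of $M_{1m}$ and $M_{2m}$ rather than eight in total.
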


We prove Lemma \ref{lem:2.0} in the appendix of this paper.

We set $\Phi_l(|z|^2)= \{\delta_{jl}\delta_{0m}\tilde \phi_j(|z|^2)\}_{j=1,2,m\geq 0}$.
Using, $\mathcal M_7$, we have
\begin{align}\label{7}
\mathcal N(|z_1|^2,|z_2|^2,\mathbf v)=&\mathcal M_7(|z_1|^2,|z_2|^2,\Phi_1(|z_1|^2)+\Phi_2(|z_2|^2)+\mathbf v)\\&-\mathcal M_7(|z_1|^2,|z_2|^2,\Phi_1(|z_1|^2))-\mathcal M_7(|z_1|^2,|z_2|^2,\Phi_2(|z_2|^2))\nonumber
\end{align}

We set $\mathcal C_j$ by $\mathcal C_j \mathbf v =\<\phi_j,v_{j0}\>$.
Then, since $|\mathcal C_j \mathbf v|\leq \|v_{j0}\|_{l_e^a}\leq r^{-1}\|\mathbf v\|_{ar}$, we have
$\mathcal C_j \in \mathcal L(X_{ar};\R)$ with $\|\mathcal C_j\|_{\mathcal L(X_{ar};\R)}\leq r^{-1}$.
Using $\mathcal C_j$, we have
\begin{align}\label{8}
\varepsilon_j(|z_1|^2,|z_2|^2,\mathbf v)=\mathcal C_j \circ \mathcal N(|z_1|^2,|z_2|^2,\mathbf v).
\end{align}

\begin{proposition}\label{prop:2}
There exists $r_0>0$ s.t.\ for $\delta<r\leq r_0$, we have $\mathbf \Phi \in C^\omega(B_{\R^2}(\delta^2)\times X_{ar}^c;X_{ar}^c)$.
Further, there exists $C_0>0$ s.t.\ for $|z|\leq \delta$, $\mathbf \Phi(|z_1|^2,|z_2|^2,\cdot)$ is a contraction mapping on $B_{X_{ar}^c}(Cr^7)$.
\end{proposition}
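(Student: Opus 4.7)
The plan is to establish three properties of $\mathbf\Phi$ in sequence: (i) real analyticity with values in $X^c_{ar}$, (ii) that $\mathbf\Phi(|z_1|^2,|z_2|^2,\cdot)$ sends $B_{X^c_{ar}}(C_0r^7)$ into itself, and (iii) that its Lipschitz constant on this ball is strictly less than one. All three will rest on the multilinear estimates of Lemma~\ref{lem:2.0}, the boundedness of $\mathcal A,\mathcal B$ from Lemma~\ref{lem:3}, and the Proposition~\ref{prop:1} bounds on $\tilde e_j,q_j,\tilde\phi_j$.

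For (i), Proposition~\ref{prop:1} gives real analyticity of $\tilde e_j,q_j,\tilde\phi_j$ in $|z|^2$, while Lemma~\ref{lem:2.0} gives real analyticity of each $\mathcal M_{2k+1}(|z_1|^2,|z_2|^2,\cdot)$ as a map into $\mathcal L^{2k+1}(X_{ar};X_{ar})$. Via \eqref{7} and \eqref{8}, this propagates to $\mathcal N$ and $\varepsilon_j$, each of which is polynomial (hence real analytic) in $\mathbf v$ with coefficients real analytic in $|z|^2$. Composition with the bounded linear operators $\mathcal A,\mathcal B,\mathbf 1_l,\mathcal P$ from Lemma~\ref{lem:3} then turns \eqref{6.02} into a real analytic map into $X_{ar}$. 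To check the image actually lies in $X^c_{ar}$: $\mathcal A\mathbf q$ is supported on the $(j,0)$ slot with entry $\varepsilon_j(H-e_j)^{-1}q_j$, which is orthogonal to $\phi_j$ because $q_j\perp\phi_j$ by Proposition~\ref{prop:1} and $(H-e_j)^{-1}$ preserves $\phi_j^\perp$; $\mathcal A\mathbf v$ and $\mathcal B\mathbf v$ preserve $X^c_{ar}$ by the same slotwise mechanism; and the last term carries $\mathcal P$ explicitly.

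For (ii), the decisive observations are $\|\Phi_l(|z_l|^2)\|_{ar}=r\|\tilde\phi_l(|z_l|^2)\|_{l^a_e}\lesssim r$ and the standing assumption $\|\mathbf v\|_{ar}\le C_0 r^7\le r$, valid once $r\le r_0$ is small. Lemma~\ref{lem:2.0} applied to \eqref{7} then yields $\|\mathcal N(|z_1|^2,|z_2|^2,\mathbf v)\|_{ar}\lesssim r^7$, and hence $|\varepsilon_j|\le\|\mathcal C_j\|\,\|\mathcal N\|_{ar}\lesssim r^{-1}\cdot r^7=r^6$. Combined with $\|q_j(|z_j|^2)\|_{l^a_e}+|\tilde e_j(|z_j|^2)|\lesssim\delta^6\le r^6$ from Proposition~\ref{prop:1}, the four groups of terms in \eqref{6.02} are bounded by
\begin{align*}
\|\mathcal A\mathbf q\|_{ar}&\lesssim r\cdot r^6\cdot \delta^6,\\
\Bigl\|\sum_l(\tilde e_l+\varepsilon_l)\mathbf 1_l\mathcal A\mathbf v\Bigr\|_{ar}&\lesssim r^6\cdot r^7,\\
\|(\tilde e_1-\tilde e_2+\varepsilon_1-\varepsilon_2)\mathcal B\mathbf v\|_{ar}&\lesssim r^6\cdot r^7,\\
\|\mathcal A\mathcal P\mathcal N\|_{ar}&\lesssim r^7,
\end{align*}
with the last bound dominating and giving $\|\mathbf\Phi(|z_1|^2,|z_2|^2,\mathbf v)\|_{ar}\le C_0r^7$ for a suitable absolute $C_0$ and all sufficiently small $r_0$.

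For (iii), I differentiate in $\mathbf v$: Lemma~\ref{lem:2.0} and the $7$-linearity of $\mathcal M_7$ give $\|D_{\mathbf v}\mathcal N(|z|^2,\mathbf v)\|_{\mathcal L(X^c_{ar};X_{ar})}\lesssim(\|\Phi_1\|+\|\Phi_2\|+\|\mathbf v\|)^6\lesssim r^6$, hence $|D_{\mathbf v}\varepsilon_j|\lesssim r^5$. The four groups of terms in $\mathbf\Phi$ then have Lipschitz constants (in $\mathbf v$) bounded by $\lesssim r^6\delta^6$, $\lesssim r^6+r^{12}$, $\lesssim r^6+r^{12}$, and $\lesssim r^6$ respectively, all strictly less than one once $r\le r_0$ is small. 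Hence $\mathbf\Phi(|z_1|^2,|z_2|^2,\cdot)$ is a contraction on $B_{X^c_{ar}}(C_0 r^7)$. The main obstacle in this scheme is the careful bookkeeping of powers of $r$ through the weight $r^{2m+1}$ in the $X_{ar}$ norm; one must use that $\Phi_j$ picks up only a single factor of $r$ (being concentrated at $m=0$) in order to see that the natural scales of $\mathcal N$ and $D_{\mathbf v}\mathcal N$ really are $r^7$ and $r^6$, which is what makes the self-mapping radius $C_0 r^7$ consistent with the contraction constant.
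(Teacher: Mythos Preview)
Your proof is correct and follows essentially the same strategy as the paper: real analyticity from Proposition~\ref{prop:1} and Lemma~\ref{lem:2.0} pushed through \eqref{7}--\eqref{8}, the key scale $\|\Phi_l\|_{ar}\lesssim r$ giving $\|\mathcal N\|_{ar}\lesssim r^7$ and $|\varepsilon_j|\lesssim r^6$, and then multilinearity of $\mathcal M_7$ yielding the Lipschitz constant $\lesssim r^6$. Your version is in fact somewhat more explicit than the paper's, in particular your slotwise verification that $\mathbf\Phi$ actually lands in $X^c_{ar}$ and your term-by-term bookkeeping of the four pieces of \eqref{6.02}.
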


\begin{proof}
By Proposition \ref{prop:1}, Lemma \ref{lem:2.0}, \eqref{6.02}, \eqref{7} and \eqref{8}, we have $\mathbf \Phi \in C^\omega(B_{\R^2}(\delta)\times X_{ar}^c;X_{ar}^c)$.

Next, notice that since $\|\Phi_j(|z_j|^2)\|_{ar}\lesssim r$, for $\|\mathbf v\|_{ar}\leq r$, we have
\begin{align*}
\|\mathcal N(|z_1|^2,|z_2|^2,\mathbf v)\|_{ar}\leq &\|\mathcal M_7(|z_1|^2,|z_2|^2,\Phi_1(|z_1|^2)+\Phi_2(|z_2|^2)+\mathbf v)\|_{ar}\\&+\|\mathcal M_7(|z_1|^2,|z_2|^2,\Phi_1(|z_1|^2))\|_{ar}+\|\mathcal M_7(|z_1|^2,|z_2|^2,\Phi_2(|z_2|^2))\|_{ar}\lesssim r^7.
\end{align*}
Therefore, by \eqref{8}, we have $|\varepsilon(|z_1|^2,|z_2|^2,\mathbf v)|\lesssim r^6$.
Next, by Proposition \ref{prop:1}, one can show $\|\mathbf q(|z_1|^2,|z_2|^2,0)\|_{ar}\lesssim r^{13}$.
Thus, we have
\begin{align}
\|\mathbf \Phi(|z_1|^2,|z_2|^2,\mathbf v)\|_{ar}\lesssim r^7.
\end{align}
We set $C_0>0$ to satisfy $\|\mathbf \Phi(|z_1|^2,|z_2|^2,0)\|_{ar}\leq 2C_0r^7$ for all $z\in B_{\C^2}(r/2)$.

Next, by the multilinearity of $\mathcal M_7(|z_1|^2,|z_2|^2,\cdot)$, for $\mathbf v^1, \mathbf v^2 \in B_{X_{ar}^c}(r)$, we have
\begin{align*}
\|\mathcal N(|z_1|^2,|z_2|^2,\mathbf v^1)-\mathcal N(|z_1|^2,|z_2|^2,\mathbf v^2)\|_{ar}\lesssim r^6\|\mathbf v^1-\mathbf v^2\|_{ar}.
\end{align*}
Thus, we also have $|\varepsilon(|z_1|^2,|z_2|^2,\mathbf v^1)-\varepsilon(|z_1|^2,|z_2|^2,\mathbf v^2)|\lesssim r^5\|\mathbf v^1-\mathbf v^2\|_{ar}$.
Combining the above estimates, we have
\begin{align*}
\|\mathbf \Phi(|z_1|^2,|z_2|^2,\mathbf v^1)-\mathbf \Phi(|z_1|^2,|z_2|^2,\mathbf v^2)\|_{ar}\lesssim r^6 \|\mathbf v^1-\mathbf v^2 \|_{ar}.
\end{align*}
This implies that for $r\ll1$, $\mathbf \Phi(|z_1|^2,|z_2|^2,\cdot)$ is a contraction mapping on $B_{X_{ar}^c}(C_0r^7)$.
\end{proof}

Theorem \ref{thm:1} follows from Proposition \ref{prop:1} almost immediately.

\begin{proof}[Proof of Theorem \ref{thm:1}]
Let $\mathbf v(|z_1|^2,|z_2|^2)$ be the solutions of the fixed point problem \eqref{6.01}.
Then, since $\mathbf \Phi\in C^\omega(B_{\R^2}(\delta^2)\times X_{ar}^c;X_{ar}^c)$, we have $\mathbf v \in C^\omega(B_{\R^2}(\delta^2);X_{ar}^c)$.
Now, for $\mathbf v(|z_1|^2,|z_2|^2)=\{v_{jm}(|z_1|^2,|z_2|^2)\}_{j=1,2,m\geq 0}$, set
\begin{align*}
\psi(z_1,z_2)=\sum_{m\geq 0} z_1^{m+1}\overline{z_2}^mv_{1m}+ \overline{z_1}^mz_2^{m+1}  v_{2m},
\end{align*}
and $\varepsilon_j(|z_1|^2,|z_2|^2)=\varepsilon_j(|z_1|^2,|z_2|^2;\mathbf v((|z_1|^2,|z_2|^2)))$, where $\varepsilon_j$ in the r.h.s.\ is given by \eqref{8}.
Then, we have $\psi \in C^\omega(B_{\C^2}(\delta);l_e^a)$ and $\varepsilon_j \in C^\omega(B_{\R^2}(\delta^2);\R)$.
The gauge property \eqref{4.001} is a direct consequence of the form of $\psi$.
Now, since $\mathbf v$ is a unique solution of \eqref{6.01} in $B_{X_{ar}^c}(Cr^7)$, we see that $\mathbf v(|z_1|^2,0)=\mathbf v(0,|z_2|^2)=0$. Further, we have $\varepsilon (|z_1|^2,0)=\varepsilon(0,|z_2|^2)=0$.
Thus, the estimates \eqref{4.01} and \eqref{4.02} follows from Proposition \ref{prop:2} and the analyticity.
Finally the fact that $\Psi(z_1,z_2)=\phi_1(z_1)+\phi_2(z_2)+\psi(z_1,z_2)$ is a solution of \eqref{1} under the condition \eqref{4.0} follows from the construction of $\mathbf v$.
\end{proof}

\section{Coordinate}\label{sec:asymp}
In this section, we prepare the standard modulation argument for the proof of Theorem \ref{thm:2}.
We show that for $u\in l^2$ with $\|u\|_{l^2}\ll1$, there exists $z_1,z_2$ s.t.\ $u=\Psi(z_1,z_2)+v$, where $v$ corresponds to the dispersive wave.
In particular, we define a ``nonlinear continuous space" $\mathcal H_c[z_1,z_2]$ (see \eqref{9.041}) and show that we can choose $z_1,z_2$ s.t.\ $v\in \mathcal H_c[z_1,z_2]$ (Lemma \ref{lem:8}).
Further, since we want to fix the space of the dispersive wave, we introduce a map $R[z_1,z_2]:\mathcal H_c[0,0]\to \mathcal H_c[z_1,z_2]$ (Lemma \ref{lem:9}) so that we can express $u$ as
$u=\Psi(z_1,z_2)+R[z_1,z_2]\eta$ for $\eta\in \mathcal H_c[0,0]$.
As a conclusion, we obtain a coordinate $(z_1,z_2,\eta)\in \C^2\times\mathcal H_c[0,0]$ on $B_{l^2}(\delta)$ with $\delta\ll1$ (Lemma \ref{lem:10}).

We first explain how to define the ``nonlinear continuous space".
Since $\Psi(e^{-\im \E_1 t}z_1, e^{-\im \E_2 t}z_2)$ is a solution of \eqref{1} for fixed $z_1,z_2$, we have
\begin{align}\label{9}
H\Psi + |\Psi|^{6}\Psi = \im \sum_{j=1,2} \(\mathcal E_{j}z_{j,I}D_{j,R}\Psi(z_1,z_2)-\mathcal E_{j}z_{j,R}D_{j,I}\Psi(z_1,z_2)\) ,
\end{align}
where $D_{j,A} f :=\partial_{z_{j,A}}f$ for $j=1,2$, $A=R,I$.

Recall that \eqref{1} conserves the energy $E$ and the $l^2$ norm, where
\begin{align}\label{9.01}
E(u)=\frac 1 2 \<H u,  u\>+\frac 1 8 \<|u|^6u,u\>.
\end{align}
Substituting, $\Psi(z_1,z_2) + v$, we have
\begin{align}
&E(\Psi+v)=E(\Psi(z_1,z_2))+E(v)+\<H \Psi(z_1,z_2)+|\Psi|^6\Psi, v\>+N(z_1,z_2,v)\nonumber\\&=E(\Psi(z_1,z_2))+E(v)+\sum_{j=1,2}\(\mathcal E_j z_{j,I}\<\im D_{j,R}\Psi , v\>-\mathcal E_j z_{j,R}\<\im D_{j,I}\Psi , v\>\)+N(z_1,z_2,v),\label{9.02}
\end{align}
where we have used \eqref{9} in the second equality and
\begin{align}
&N(z_1,z_2,v)=\sum_{k=2}^7\sum_{i+j=k,\ i\geq j}\<G_{k,i,j}(z_1,z_2),v^i\overline{v}^j\>,\label{9.03}\\&
G_{k,i,j}(z_1,z_2)=\sum_{l+r=8-k}C_{k,i,j,l,r}\Psi(z_1,z_2)^l\overline{\Psi(z_1,z_2)}^r,\label{9.04}
\end{align}
for some $C_{k,i,j,l,r}\in \R$.
We take the orthogonality condition for $v$ to eliminate the first order term of $v$ in \eqref{9.02}.
Therefore, we set
\begin{align}\label{9.041}
\mathcal H_c[z_1,z_2]:=\{v\in l^2\ |\ \<\im v,D_{j,A}\Psi\>=0,\ j=1,2,\ A=R,I\}.
\end{align}

Now, by a standard argument using implicit function theorem, one can choose $z_1,z_2$ to have $v\in \mathcal H_c[z_1,z_2]$.
In the following, we use the notation $\phi_{j,R}:=\phi_j$ and $\phi_{j,I}=\im \phi_j$ for $j=1,2$.

%

\begin{lemma}\label{lem:8}
There exists $\delta>0$ s.t.\ there exists
$
(z_1(\cdot), z_2(\cdot))\in C^\omega (B_{l^2}(\delta);\C\times \C),
$
s.t.\  
\begin{align}\label{9.05}
v(u):=u-\Psi(z_1(u),z_2(u))\in \mathcal H_c[z_1(u),z_2(u)].
\end{align}
\end{lemma}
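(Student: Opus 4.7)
My plan is to apply the real-analytic implicit function theorem to the map
\begin{align*}
F_{j,A}(z_1, z_2, u) := \<\im (u - \Psi(z_1, z_2)), D_{j,A}\Psi(z_1, z_2)\>, \qquad j \in \{1,2\},\ A \in \{R, I\},
\end{align*}
viewed as $F : B_{\C^2}(\delta_1) \times l^2 \to \R^4$. Since Theorem \ref{thm:1} gives $\Psi \in C^\omega(B_{\C^2}(\delta_1); l_e^{a_1}) \hookrightarrow C^\omega(B_{\C^2}(\delta_1); l^2)$, the map $F$ is real-analytic in $(z_1, z_2, u)$, and trivially $F(0, 0, 0) = 0$.

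Next I would compute the partial Jacobian $\partial_{(z_1,z_2)} F$ at $(0,0,0)$. Using \eqref{3} and \eqref{4.01} one sees that $D_{j,R}\Psi(0,0) = \phi_j = \phi_{j,R}$ and $D_{j,I}\Psi(0,0) = \im \phi_j = \phi_{j,I}$. Differentiating $F_{j,A}$ in $z_{k,B}$ at the origin, the term coming from acting on $u - \Psi$ contributes $-\<\im \phi_{k,B}, \phi_{j,A}\>$, while the term containing a second derivative of $\Psi$ vanishes because $u - \Psi(0,0) = 0$. The normalization $\<\phi_j, \phi_k\> = \delta_{jk}$ and the reality of the $\phi_j$ then yield
\begin{align*}
-\<\im \phi_{k,B}, \phi_{j,A}\> = \delta_{jk}\, \omega_{AB}, \qquad \omega_{AB} = \begin{cases} 1 & (A,B) = (R,I), \\ -1 & (A,B) = (I,R), \\ 0 & \text{otherwise,} \end{cases}
\end{align*}
so the Jacobian is block-diagonal in the index $j$ with each $2\times 2$ block equal to $\bigl(\begin{smallmatrix} 0 & 1 \\ -1 & 0 \end{smallmatrix}\bigr)$, hence invertible.

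The real-analytic implicit function theorem now produces $\delta > 0$ and real-analytic $z_1, z_2 : B_{l^2}(\delta) \to \C$ with $z_j(0) = 0$ satisfying $F(z_1(u), z_2(u), u) = 0$ on $B_{l^2}(\delta)$, which is exactly the condition \eqref{9.05}. The only substantive step is the Jacobian calculation: the off-diagonal blocks ($j \neq k$) vanish because distinct eigenfunctions are $l^2$-orthogonal, and the diagonal blocks are non-degenerate thanks to $\|\phi_j\|_{l^2} = 1$; once that is in hand the result is immediate.
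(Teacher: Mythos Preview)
Your proof is correct and follows essentially the same approach as the paper: define the four constraint functions $F_{j,A}(z_1,z_2,u)=\<\im(u-\Psi),D_{j,A}\Psi\>$, verify real-analyticity, compute the Jacobian in $(z_1,z_2)$ at the origin using $D_{j,A}\Psi(0,0)=\phi_{j,A}$ and the vanishing of $u-\Psi(0,0)$, and invoke the implicit function theorem. Your explicit identification of the block-diagonal structure and the reason the off-diagonal blocks vanish (orthogonality of $\phi_1,\phi_2$) is slightly more detailed than the paper's presentation but otherwise identical in substance.
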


\begin{proof}
Set
\begin{align*}
\mathcal F(u,z_1,z_2):=
\begin{pmatrix}
\<\im(u-\Psi(z_1,z_2)),D_{0,R}\Psi(z_1,z_2)\>\\
\<\im(u-\Psi(z_1,z_2)),D_{0,I}\Psi(z_1,z_2)\>\\
\<\im(u-\Psi(z_1,z_2)),D_{1,R}\Psi(z_1,z_2)\>\\
\<\im(u-\Psi(z_1,z_2)),D_{1,I}\Psi(z_1,z_2)\>
\end{pmatrix}
\end{align*}
By implicit function theorem, to obtain $z_1(u),z_2(u)$ which satisfy $\mathcal F(u,z_1(u),z_2(u))=0$, it suffices to show $\left.\frac {\partial \mathcal F}{\partial (z_{1,R},z_{1,I},z_{2,R},z_{2,I})}\right|_{(u,z_1,z_2)=0}$ is invertible if $\|u\|_{l^2}\ll 1$.
Since for $j,k=1,2$, $A,B=R,I$,
$\Psi=o(1)$, $D_{j,A}\Psi=\phi_{j,A}+o(1)$ and $D_{j,A}D_{k,B}\Psi=o(1)$ as $\|u\|_{l^2}, |z_1|, |z_2|\to 0$,
we have
\begin{align*}
\left.\frac {\partial \mathcal F}{\partial (z_{1,R},z_{1,I},z_{2,R},z_{2,I})}\right|_{(u,z_1,z_2)=(0,0,0)}=\begin{pmatrix} 0 & 1 & 0 & 0\\
-1 & 0  & 0 & 0 \\ 0 & 0 & 0 & 1 \\ 0 & 0 & -1 & 0 \end{pmatrix}.
\end{align*}
Therefore, there exists $(z_1(\cdot), z_2(\cdot))\in C^\omega (B_{l^2}(\delta);\C\times \C)$ s.t.\ $\mathcal F(u,z_1(u),z_2(u))=0$ which is equivalent to \eqref{9.05}.
\end{proof}

Next, set
\begin{align*}
&P_d:=\sum_{j=1,2,A=R,I}\<\cdot,\phi_{j,A}\>\phi_{j,A},\quad
P_c:=1-P_d.
\end{align*}
Notice that $\mathcal H_c[0,0]=P_c l^2=:l_c^2$.

We now define the inverse of the map $\left.P_c\right|_{\mathcal H_c[z_1,z_2]}$ which was used in \cite{GNT04}.

\begin{lemma}\label{lem:9}
There exists $\delta>0$ s.t. there exists $\alpha_{j,A}\in C^\omega (B_{\C^2}(\delta);l_e^{a_1}(\Z;\C))$ ($j=1,2$, $A=R,I$), where, $a_1$ is the constant given in Theorem \ref{thm:1},
 s.t.\ 
\begin{align*}
\|\alpha_{j,A}(z_1,z_2)\|_{l_e^{a_1}}\lesssim|z|^{6},
\end{align*}
Further, 
\begin{align}\label{9.2}
R[z_1,z_2]\eta=\eta + \sum_{j=1,2,A=R,I}\<\alpha_{j,A}(z_1,z_2), \eta\>\phi_{j,A}.
\end{align}
satisfies $R[z_1,z_2]:l_c^2\to \mathcal H_c[z_1,z_2]$ and $\left.P_c\right|_{\mathcal H_c[z_1,z_2]}=R[z_1,z_2]^{-1}$.
\end{lemma}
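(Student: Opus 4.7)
My plan is to seek $R[z_1,z_2]\eta$ in the ansatz $v=\eta+\sum_{j,A}c_{j,A}(z_1,z_2;\eta)\phi_{j,A}$ with real coefficients $c_{j,A}$ depending linearly on $\eta$; such a $v$ automatically satisfies $P_cv=\eta$ since $\eta\in l_c^2$ and each $\phi_{j,A}$ lies in $\ker P_c$. Imposing $v\in\mathcal{H}_c[z_1,z_2]$ then becomes the $4\times 4$ real linear system
\[\sum_{j,A}M_{kB,jA}(z_1,z_2)\,c_{j,A}=-\<\im\eta,D_{k,B}\Psi(z_1,z_2)\>,\qquad M_{kB,jA}(z_1,z_2):=\<\im\phi_{j,A},D_{k,B}\Psi(z_1,z_2)\>.\]
At $(z_1,z_2)=0$ one has $D_{k,B}\Psi|_0=\phi_{k,B}$ by Theorem~\ref{thm:1}, and using $\im\phi_{j,R}=\phi_{j,I}$, $\im\phi_{j,I}=-\phi_{j,R}$ together with orthonormality of $\phi_1,\phi_2$, $M(0,0)$ is a block-diagonal matrix with $\pm 1$ anti-diagonal entries, hence invertible. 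Real analyticity of $\Psi$ as an $l_e^{a_1}$-valued map then implies that $M(z_1,z_2)$ stays invertible and $M^{-1}$ is real analytic on some ball $B_{\C^2}(\delta)$.

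Next, I would solve to get $c_{j,A}=-\sum_{k,B}(M^{-1})_{jA,kB}\<\im\eta,D_{k,B}\Psi\>$, rewrite $\<\im\eta,\xi\>=-\<\eta,\im\xi\>$, and exploit the key fact that $\eta\in l_c^2$ is orthogonal to every $\phi_{\ell,C}$---hence to $\im\phi_{k,B}\in\mathrm{span}\{\phi_{k,R},\phi_{k,I}\}$---so that $\im D_{k,B}\Psi$ may be replaced by $\im(D_{k,B}\Psi-\phi_{k,B})$ inside the inner product without altering $c_{j,A}$. This motivates the definition
\[\alpha_{j,A}(z_1,z_2):=\sum_{k,B}(M^{-1})_{jA,kB}(z_1,z_2)\,\im\bigl(D_{k,B}\Psi(z_1,z_2)-\phi_{k,B}\bigr),\]
so that $c_{j,A}=\<\alpha_{j,A}(z_1,z_2),\eta\>$ and \eqref{9.2} holds. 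Real analyticity of $\alpha_{j,A}:B_{\C^2}(\delta)\to l_e^{a_1}(\Z;\C)$ follows from that of $\Psi$ and $M^{-1}$.

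The quantitative estimate $\|\alpha_{j,A}\|_{l_e^{a_1}}\lesssim|z|^6$ is the point requiring some care. From $\Psi=\phi_1(z_1)+\phi_2(z_2)+\psi$ a direct differentiation yields, for instance,
\[D_{1,R}\Psi-\phi_{1,R}=q_1(|z_1|^2)+2z_1z_{1,R}q_1'(|z_1|^2)+D_{1,R}\psi,\]
and analogous formulas for the other indices. Proposition~\ref{prop:1} gives $\|q_j(|z_j|^2)\|_{l_e^{a_0}}\lesssim|z_j|^6$, and since $q_j$ is analytic with $q_j(s)=O(s^3)$ also $\|q_j'(|z_j|^2)\|_{l_e^{a_0}}\lesssim|z_j|^4$, so the first two terms are $O(|z|^6)$ in $l_e^{a_0}\hookrightarrow l_e^{a_1}$. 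For $D_{k,B}\psi$ the hard part will be a Cauchy-inequality argument: combining the $l_e^{a_1}$-valued real analyticity of $\psi$ on $B_{\C^2}(\delta_1)$ with the stronger bound $\|\psi(z_1,z_2)\|_{l_e^{a_1}}\lesssim|z_1||z_2|(|z_1|^5+|z_2|^5)=O(|z|^7)$ from Theorem~\ref{thm:1}, and shrinking $\delta$ if needed, one obtains $\|D_{k,B}\psi\|_{l_e^{a_1}}\lesssim|z|^6$. Putting these pieces together gives $\|D_{k,B}\Psi-\phi_{k,B}\|_{l_e^{a_1}}\lesssim|z|^6$, and hence the bound on $\alpha_{j,A}$. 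Finally, to see that $\left.P_c\right|_{\mathcal{H}_c[z_1,z_2]}=R[z_1,z_2]^{-1}$, injectivity of $R[z_1,z_2]$ is immediate upon applying $P_c$ to \eqref{9.2}, while for surjectivity onto $\mathcal{H}_c[z_1,z_2]$ I would, given $v\in\mathcal{H}_c[z_1,z_2]$, set $\eta:=P_cv$ and $w:=v-R[z_1,z_2]\eta\in\mathcal{H}_c[z_1,z_2]\cap\ker P_c$, write $w=\sum_{j,A}a_{j,A}\phi_{j,A}$, and note that the defining orthogonality reduces to the homogeneous system $M(z_1,z_2)a=0$, which by invertibility of $M$ forces $a=0$ and thus $v=R[z_1,z_2]\eta$.
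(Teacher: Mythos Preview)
Your proposal is correct and follows essentially the same approach as the paper: set up the ansatz $\eta+\sum_{j,A}c_{j,A}\phi_{j,A}$, solve the resulting $4\times4$ linear system whose coefficient matrix is invertible near the origin, and then use the orthogonality of $\eta\in l_c^2$ to the $\phi_{k,B}$ so that only the remainder $D_{k,B}\Psi-\phi_{k,B}=D_{k,B}(z_1q_1+z_2q_2+\psi)$ contributes, yielding the $O(|z|^6)$ bound. You are a bit more explicit than the paper (e.g., your Cauchy-type argument for $\|D_{k,B}\psi\|_{l_e^{a_1}}$, and your surjectivity argument via $\mathcal H_c[z_1,z_2]\cap\ker P_c=\{0\}$), but the strategy is identical.
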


\begin{proof}
We define $\beta_{j,A}[z_1,z_2]\eta\in \R$ ($j=1,2$, $A=R,I$) for $\eta\in l_c^2$ to be the unique solution of
\begin{align}\label{10}
\<\im\(\eta + \sum_{j=1,2, A=R,I}(\beta_{j,A}[z_1,z_2]\eta)\phi_{j,A}\),D_{k,B}\Psi(z_1,z_2)\>=0,
\end{align}
for $k=0,1$, $B=R,I$ and set
\begin{align*}
R[z_1,z_2]\eta=\eta + \sum_{j=1,2,A=R,I}\(\beta_{j,A}(z_1,z_2)\eta\)\phi_{j,A}
\end{align*}
By the form of $R[z_1,z_2]$, it is obvious that $P_cR[z_1,z_2]=\mathrm{id}_{l_c^2}$.
On the other hand for $\eta\in \mathcal H_c[z_1,z_2]$, we have
\begin{align*}
R[z_1,z_2]P_c\eta&=P_c\eta +  \sum_{j=1,2,A=R,I}(\beta_{j,A}(z_1,z_2)P_c\eta)\phi_{j,A})\\&=
\eta +\sum_{j=1,2,A=R,I}\((\beta_{j,A}(z_1,z_2)P_c\eta)-\<\eta,\phi_{j,A}\>\)\phi_{j,A}.
\end{align*}
Since $P_c\eta + \sum_{j=1,2,A=R,I}\<\eta,\phi_{j,A}\>\phi_{j,A} \in \mathcal H_c[z_1,z_2]$, by the uniqueness of the solution of \eqref{10}, we have
\begin{align*}
\beta_{j,A}(z_1,z_2)P_c\eta=\<\eta,\phi_{j,A}\>,\quad j=1,2,\ A=R,I.
\end{align*}
Therefore, we have $R[z_1,z_2]P_c\eta=\eta$.

We finally prove \eqref{10} has a unique solution.
\eqref{10} can be written as
\begin{align}\label{10.01}
\sum_{j=1,2,A=R,I}\(\beta_{j,A}(z_1,z_2)\eta\< \phi_{j,A},D_{k,B}\Psi\>\)=-\<\im \eta, D_{k,B}\Psi\>=-\<\im \eta, D_{k,B}(q_0+q_1+\psi)\>,
\end{align}
where $k=0,1$ and $B=R,I$.
Writing \eqref{10.01} in the matrix form, one can see the coefficient matrix becomes invertible.
Therefore, we have a unique solution of \eqref{10} and the solution $\beta_{j,A}[z_1,z_2]\eta$ can be expressed as $\<\alpha_{j,A}(z_1,z_2),\eta\>$ where $\alpha_{j,A}(z_1,z_2)$ are linear combinations of $D_{k,B}(q_0+q_1+\psi)$ for $k=0,1$ and $B=R,I$.
This expression combined with Theorem \ref{thm:1} gives us the desired estimates for $\alpha_{j,A}$ for $j=1,2$ and $A=R,I$.
\end{proof}

Combining Lemmas \ref{lem:8}, \ref{lem:9}, we obtain a system of coordinates near the origin of $l^2$.
\begin{lemma}\label{lem:10}
Let $\delta>0$ sufficiently small.
Then there exits a $C^\omega$ diffeomorphism
\begin{align}\label{10.1}
B_{\C^2\times l_c^2}(\delta)\ni (z_1,z_2,\eta)\mapsto u=\Psi(z_1,z_2)+R[z_1,z_2]\eta \in l^2.
\end{align}
Further, we have
\begin{align}\label{11}
|z_1|+|z_2|+\|\eta\|_{l^2}\sim \|u\|_{l^2}.
\end{align}
\end{lemma}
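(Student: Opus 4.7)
The plan is to set
\[
F(z_1,z_2,\eta):=\Psi(z_1,z_2)+R[z_1,z_2]\eta
\]
and show that $F$ is a real-analytic local diffeomorphism near $0\in\C^2\times l_c^2$, then use Lemmas \ref{lem:8} and \ref{lem:9} to identify its inverse explicitly, which will also give the norm equivalence \eqref{11}. Real analyticity of $F$ is immediate: $\Psi\in C^\omega$ by Theorem \ref{thm:1}, and $R[z_1,z_2]\eta$ is linear in $\eta$ with $(z_1,z_2)$-dependent coefficients $\alpha_{j,A}\in C^\omega$ by Lemma \ref{lem:9}.

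First I would apply the analytic inverse function theorem. One computes $dF_{(0,0,0)}:\C^2\times l_c^2\to l^2$ by noting that, because $\eta=0$, only the $\Psi$-piece contributes to the $(z_1,z_2)$-derivatives, and because $R[0,0]=\mathrm{id}_{l_c^2}$ by \eqref{9.2} (using $\alpha_{j,A}(0,0)=0$), the $\eta$-derivative is the inclusion $l_c^2\hookrightarrow l^2$. Since $D_{j,A}\Psi(0,0)=\phi_{j,A}$, the differential is the map
\[
(z_{1,R},z_{1,I},z_{2,R},z_{2,I},\eta)\ \longmapsto\ \sum_{j=1,2,\,A=R,I} z_{j,A}\,\phi_{j,A}\ +\ \eta,
\]
which is a bounded linear isomorphism from $\C^2\times l_c^2$ onto the topological direct sum $l_d^2\oplus l_c^2=l^2$. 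The inverse function theorem therefore provides $\delta>0$ and a $C^\omega$ inverse on some $l^2$-neighborhood of the origin.

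Next I would identify that inverse with the explicit construction supplied by the previous two lemmas, thereby certifying that it is defined on the whole ball $B_{l^2}(\delta)$ (shrinking $\delta$ if needed). Given $u\in B_{l^2}(\delta)$, Lemma \ref{lem:8} produces $(z_1(u),z_2(u))\in C^\omega$ with $v(u):=u-\Psi(z_1(u),z_2(u))\in \mathcal{H}_c[z_1(u),z_2(u)]$. Setting $\eta(u):=P_c v(u)\in l_c^2$, the identity $\left.P_c\right|_{\mathcal H_c[z_1,z_2]}^{-1}=R[z_1,z_2]$ from Lemma \ref{lem:9} yields $R[z_1(u),z_2(u)]\eta(u)=v(u)$, so $F(z_1(u),z_2(u),\eta(u))=u$. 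Uniqueness on the ball follows since any preimage $(z_1,z_2,\eta)$ of $u$ under $F$ forces $u-\Psi(z_1,z_2)=R[z_1,z_2]\eta\in\mathcal H_c[z_1,z_2]$, whence $(z_1,z_2)$ must coincide with those produced by Lemma \ref{lem:8} and $\eta=P_c(u-\Psi(z_1,z_2))$ is then determined. This also shows both $(z_1,z_2)$ and $\eta$ depend analytically on $u$.

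Finally, for \eqref{11}, the inequality $\|u\|_{l^2}\lesssim|z_1|+|z_2|+\|\eta\|_{l^2}$ is immediate from $\Psi(z_1,z_2)=z_1\tilde\phi_1+z_2\tilde\phi_2+\psi$ together with \eqref{4.01} and the bound $\|R[z_1,z_2]\|_{\mathcal L(l_c^2;l^2)}\lesssim 1$ from Lemma \ref{lem:9}. Conversely, the analyticity of the inverse at $0$ plus the fact that $dF^{-1}_0$ is the projection onto $\C^2\times l_c^2$ following the decomposition $l^2=l_d^2\oplus l_c^2$ gives $|z_1|+|z_2|+\|\eta\|_{l^2}\leq 2\|u\|_{l^2}$ on a possibly smaller ball, completing the claim. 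The main (but mild) obstacle is simply matching the abstract inverse given by the inverse function theorem with the explicit expressions of Lemmas \ref{lem:8}--\ref{lem:9}; everything else is a routine consequence of those two lemmas.
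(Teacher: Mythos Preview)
Your proof is correct and follows precisely the approach the paper indicates: the paper simply states that the lemma is obtained by ``Combining Lemmas \ref{lem:8}, \ref{lem:9}'' and gives no further details, while you have carried out that combination explicitly via the analytic inverse function theorem and the identification of the inverse through $\eta(u)=P_c v(u)$. The only minor remark is that your uniqueness step implicitly relies on the local uniqueness in Lemma \ref{lem:8} (coming from the implicit function theorem), which is indeed available.
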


In the following, we set $(z_1(u),z_2(u),\eta(u))\in \C\times \C\times  l_c^2$ to be the inverse of the map \eqref{10.1}.
%

\section{Darboux theorem}\label{sec:darboux}
In the previous section, we have introduced a coordinate $(z_1,z_2,\eta)$ to express small $l^2$ functions.
Since, $l^2$ is conserved by the flow of \eqref{1}, we can study the dynamics of small solutions of \eqref{1} in this coordinate.
Indeed, since we have the equation \eqref{1} and four orthogonal conditions \eqref{9.041}, \eqref{1} becomes a system of one discrete evolution equation and four ODEs.
However, due to the complexity of the quasi-periodic solution itself, it seems to be difficult to handle this system directly.
Therefore, following \cite{CuMaAPDE}, we make a change of coordinate to have a "canonical" coordinate system and moreover have a simple system of equations \eqref{40}-\eqref{41}, given in the end of this section.

In the following, we introduce exterior derivatives and symplectic forms.

\begin{definition}[Exterior derivative]
Let $F \in C^\infty (l^2;\mathbf B)$, where $\mathbf B$ be a Banach space (in particular we are considering the case $\mathbf B=\R,\C,l^2$).
We think $F$ as a $0$-form and define its exterior derivative $dF(u)$ (which is a $1$-form) by
$dF(u)=D F(u)$, where $D F(u)$ is the Fr\'echet derivative of $F$.
Next, let $\omega(u)$ be $1$-form.
Then, we define its exterior derivative $d \omega(u)$ (which is a $2$-form) by
\begin{align}\label{14}
d \omega (u) (X,Y)=\mathcal L_X \omega(u) (Y) - \mathcal L_Y \omega(u) (X),
\end{align}
where $\mathcal L_X$ is the Lie derivative (i.e.\ $\mathcal L_X \omega(u) (Y)=\left.\frac{d}{d \varepsilon}\right|_{\varepsilon=0} \omega(u+ \varepsilon X)(Y)$).
\end{definition}

\begin{remark}
In general, for the definition of the exterior derivative, we have to add  $-\omega(\mathcal L_X(Y))$ to \eqref{14}.
However, our space $l^2$ is flat and we only have to consider constant vector fields for the definition, we can define $d\omega$ as \eqref{14}.
See section 6.4 of \cite{AMRBook}.

\end{remark}

We set the symplectic form $\Omega$ associated to \eqref{1} by
\begin{align*}
\Omega(X,Y):=\<\im X ,Y \>,
\end{align*}
and
\begin{align*}
B(u)X:=\frac 1 2 \Omega(u,X).
\end{align*}
Then,
\begin{align}\label{15}
dB(u)(X,Y)=\mathcal L_XB(u)Y-\mathcal L_Y B(u)Y=\frac 1 2 \Omega (X,Y)-\frac 1 2 \Omega(Y,X)=\Omega(X,Y).
\end{align}
Therefore, we have $dB(u)=\Omega$.

Next, we introduce a new symplectic form $\Omega_0$.

\begin{definition}
We define the $1$-form $B_0$ and $2$-form $\Omega_0$ by the following.
\begin{align*}
&B_0(u)X:=\frac 1 2 \Omega(\Psi(z_1,z_2), d\Psi(z_1,z_2)(X))+\frac 1 2 \Omega(\eta,d\eta(X)),\\&
\Omega_0(X,Y)=\Omega(d\Psi(z_1,z_2)(X),d\Psi(z_1,z_2)(Y))+\Omega(d\eta(X),d\eta(Y)).
\end{align*}
\end{definition}

\begin{remark}
As \eqref{15}, we see
$
dB_0(u)=\Omega_0.
$
\end{remark}

\begin{remark}
The original symplectic form $\Omega$ do not depend on $u$.
However, the new symplectic form $\Omega_0$ depends on $u$.
So, $\Omega_0(X,Y)$ should be written as $\Omega_0(u)(X,Y)$.
However, we omit $u$ since there should be no confusion.
\end{remark}

Our aim is to change the coordinate system $(z_1,z_2,\eta)$ to have the new symplectic form $\Omega_0$.
To do so, we use the Moser's argument.
Let $\Gamma$ s.t.\ $\Omega-\Omega_0=d \Gamma$ and $\mathcal X^s$ satisfies $i_{\mathcal X^s}(\Omega_0+s(\Omega-\Omega_0))=-\Gamma$, where $i_X \omega(Y)=\omega(X,Y)$.
Then, if we set $\mathcal Y_s$ to be the solution map of $\frac{d}{ds}\mathcal Y_s =\mathcal X^s(\mathcal Y_s)$, we have
\begin{align}\label{15.01}
\frac{d}{d s}(\mathcal Y_s^* \Omega_s)=\mathcal Y_s^*(\mathcal L_{\mathcal X^s}\Omega_s + \partial \Omega_s)=\mathcal Y_s^*(di_{\mathcal X^s}\Omega_s + d \Gamma)=0,
\end{align}
where $\Omega_s=\Omega_0+s(\Omega-\Omega_0)$.
Thus, we have the desired change of coordinate $\mathcal Y= \mathcal Y_1$ which satisfies $\mathcal Y^*\Omega=\Omega_0$.
By this argument, it may look like we have already have the change of the coordinate.
However, for the application to the asymptotic stability of the quasi-periodic solution, we need have an estimate of $\mathcal Y$ in some weighted space.

In the following we construct $\Gamma$ and $\mathcal X^s$ directly.

\begin{lemma}\label{lem:11}
Let $\delta>0$ sufficiently small.
Then, there exists $F_\eta\in C^\omega(B_{\C^2\times P_cl_e^{-a_1}}(\delta);l_e^{a_1})$ and $F_{j,A}\in C^\omega(B_{\C^2\times P_cl_e^{-a_1}}(\delta);l_e^{a_1})$ $(j=1,2,\ A=R,I)$ s.t. there exists $C$ s.t.
\begin{align*}
B(u)-B_0(u)-dC=\sum_{j=1,2,A=R,I}\<F_{j,A},\eta\>dz_{j,A}+\<F_\eta,d\eta\>=:\Gamma.
\end{align*}
Further, for $j=1,2,A=R,I$, we have
\begin{align}
\|F_\eta\|_{l_e^{a_1}}\lesssim  |z|^{6}\|\eta\|_{l_e^{-a_1}},\quad
\|F_{j,A}\|_{l_e^{a_1}}\lesssim |z|^{6}.\label{15.1}
\end{align}
\end{lemma}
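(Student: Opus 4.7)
The plan is to expand $B(u)$ and $B_0(u)$ as one-forms in the $(z_1,z_2,\eta)$ coordinates of Lemma \ref{lem:10}, absorb the two ``gauge-type'' cross terms into a single scalar primitive $C$, and use the orthogonality $R[z_1,z_2]\eta\in\mathcal H_c[z_1,z_2]$ from Lemma \ref{lem:9} to annihilate the remaining cross term. Concretely, write $u=\Psi(z_1,z_2)+R[z_1,z_2]\eta$ and $du=d\Psi+d(R\eta)$; bilinearity of $\Omega$ gives
\[
B(u)-B_0(u)=\tfrac12\Omega(\Psi,d(R\eta))+\tfrac12\Omega(R\eta,d\Psi)+\tfrac12\bigl[\Omega(R\eta,d(R\eta))-\Omega(\eta,d\eta)\bigr].
\]

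Set $C:=\tfrac12\Omega(\Psi,R\eta)=\tfrac12\<\im\Psi,R\eta\>$. A Leibniz calculation yields $dC=\tfrac12\Omega(\Psi,d(R\eta))-\tfrac12\Omega(R\eta,d\Psi)$, so $B-B_0-dC=\Omega(R\eta,d\Psi)+\tfrac12[\Omega(R\eta,d(R\eta))-\Omega(\eta,d\eta)]$. Because $d\Psi(X)$ is a linear combination of the $D_{j,A}\Psi$ while $R\eta\in\mathcal H_c[z_1,z_2]$, the one-form $\Omega(R\eta,d\Psi)=\sum_{j,A}\<\im R\eta,D_{j,A}\Psi\>\,dz_{j,A}$ vanishes identically. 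Writing $R=I+K$ with $K[z]\eta:=\sum_{j,A}\<\alpha_{j,A}(z),\eta\>\phi_{j,A}$ and splitting $d(R\eta)(X)=R(d\eta(X))+(\partial_zK)(dz(X))\,\eta$, I identify
\[
F_\eta:=\tfrac12(R^*\im R-\im)\eta=\tfrac12(\im K+K^*\im+K^*\im K)\eta,\qquad F_{l,C}:=\tfrac12\sum_{j,A}\<\im R\eta,\phi_{j,A}\>\,D_{l,C}\alpha_{j,A}
\]
as the $d\eta$- and $dz_{l,C}$-coefficients, so that $B-B_0-dC=\<F_\eta,d\eta\>+\sum_{l,C}\<F_{l,C},\eta\>dz_{l,C}$. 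The $C^\omega$ regularity of $F_\eta,F_{l,C}$ on $B_{\C^2\times P_cl_e^{-a_1}}(\delta)$ is inherited from that of $\Psi$ and the $\alpha_{j,A}$.

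The estimates follow from $\|\alpha_{j,A}\|_{l_e^{a_1}}\lesssim|z|^6$ (Lemma \ref{lem:9}) and the weighted Cauchy--Schwarz $|\<f,g\>|\le\|f\|_{l_e^{a_1}}\|g\|_{l_e^{-a_1}}$, which give $\|K\|_{l_e^{-a_1}\to l_e^{a_1}},\,\|K^*\|_{l_e^{-a_1}\to l_e^{a_1}}\lesssim|z|^6$ and hence $\|F_\eta\|_{l_e^{a_1}}\lesssim|z|^6\|\eta\|_{l_e^{-a_1}}$. For $F_{l,C}$, Cauchy estimates for analytic functions yield $\|D_{l,C}\alpha_{j,A}\|_{l_e^{a_1}}\lesssim|z|^5$; to recover the missing power I use the orthogonality a \emph{second} time, combining $\<\im R\eta,D_{j,A}\Psi\>=0$ with the expansion $\|D_{j,A}\Psi-\phi_{j,A}\|_{l_e^{a_1}}\lesssim|z|^6$ (from Proposition \ref{prop:1} and Theorem \ref{thm:1}) to deduce $|\<\im R\eta,\phi_{j,A}\>|\lesssim|z|^6\|\eta\|_{l_e^{-a_1}}$, whence $\|F_{l,C}\|_{l_e^{a_1}}\lesssim|z|^{11}\|\eta\|_{l_e^{-a_1}}\lesssim|z|^6$ on the ball. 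The main obstacle is precisely this twofold use of the $\mathcal H_c[z_1,z_2]$-orthogonality: once algebraically, to extract $\Gamma$ from $B-B_0$ modulo an exact form, and once analytically, to recover the declared $|z|^6$ factor in the bound on $F_{l,C}$.
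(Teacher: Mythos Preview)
Your argument is correct and takes a genuinely different route from the paper's. Both choose the same primitive $C=\tfrac12\Omega(\Psi,R\eta)$ (the paper's $C$ is exactly this, written out), so the resulting one-form $\Gamma$ is identical; the difference lies in how the $dz_{j,A}$–coefficients are organised. The paper expands $\Omega(u,du)$ term by term in $u=\Psi+\eta+K\eta$, uses only the \emph{linear} orthogonality $\eta\in l_c^2$ to rewrite $\Omega(\eta,D_{j,A}\Psi)=\Omega(\eta,D_{j,A}(q_1+q_2+\psi))$, and reads off an explicit $F_{j,A}$ whose leading term $-\im D_{j,A}(q_1+q_2+\psi)$ carries the $|z|^6$ smallness from Proposition~\ref{prop:1} and Theorem~\ref{thm:1}. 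You instead exploit the full \emph{nonlinear} orthogonality $R\eta\in\mathcal H_c[z_1,z_2]$ to annihilate $\Omega(R\eta,d\Psi)$ outright, reducing $\Gamma$ to $\tfrac12[\Omega(R\eta,d(R\eta))-\Omega(\eta,d\eta)]$, in which every surviving term contains a factor of $\alpha_{j,A}$ and is thus manifestly $O(|z|^6)$. This is cleaner and yields the sharper bound $\|F_{l,C}\|_{l_e^{a_1}}\lesssim|z|^{11}\|\eta\|_{l_e^{-a_1}}$. One minor simplification: your ``second use'' of the $\mathcal H_c$-orthogonality is not needed, since $\eta\in l_c^2$ already gives $\langle\im R\eta,\phi_{j,A}\rangle=\langle\im K\eta,\phi_{j,A}\rangle$, and $\|K\eta\|\lesssim|z|^6\|\eta\|_{l_e^{-a_1}}$ delivers the same factor directly.
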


\begin{proof}
In the following, we write $\Sigma_{j=1,2,A=R,I}$ as $\Sigma_{j,A}$. Further $\Sigma_{k,B}$ and $\Sigma_{l,C}$ will have the same meaning.
First, since
\begin{align*}
2B(u)=&\Omega(u,du)\\
=&\Omega(\Psi + \eta + \sum_{j,A}\<\alpha_{j,A},\eta\>\phi_{j,A}, d(\Psi + \eta + \sum_{k,B}\<\alpha_{k,B},\eta\>\phi_{k,B}))\\=&
\Omega(\Psi,d\Psi)+\Omega(\eta,d\eta) + \Omega(\Psi, d\eta)+ \Omega(\eta, d\Psi)\\&
\quad + \sum_{k,B}\Omega(\Psi, \phi_{k,B})d(\<\alpha_{k,B},\eta\>)+\sum_{j,A}\sum_{k,B}\Omega(\phi_{j,A},\phi_{k,B})\<\alpha_{a,A},\eta\> d(\<\alpha_{k,B},\eta\>).
\end{align*}
So, we have
\begin{align}
2(B(u)-B_0(u))=&\Omega(\Psi, d\eta)+ \Omega(\eta, d\Psi)\label{16}
 + \sum_{k,B}\Omega(\Psi, \phi_{k,B})d(\<\alpha_{k,B},\eta\>)\\&+\sum_{j,A}\sum_{k,B}\Omega(\phi_{j,A},\phi_{k,B})\<\alpha_{a,A},\eta\> d(\<\alpha_{k,B},\eta\>).\nonumber
\end{align}
The first and second term of r.h.s.\ of \eqref{16} can be rewritten as
\begin{align}\label{17}
\Omega(\Psi, d\eta)+ \Omega(\eta, d\Psi)=d \Omega(\Psi,\eta) + 2 \Omega(\eta, d\Psi).
\end{align}
The third term of r.h.s.\ of \eqref{16} can be rewritten as
\begin{align}\label{18}
\sum_{k,B}\Omega(\Psi, \phi_{k,B})d(\<\alpha_{k,B},\eta\>)=d\(\sum_{k,B}\Omega(\Psi,\phi_{k,B})\<\alpha_{k,B},\eta\>\)+\sum_{k,B}\<\alpha_{k,B},\eta\>\Omega(\phi_{k,B},d\Psi).
\end{align}
The last term of \eqref{16} can be rewritten as
\begin{align}\label{19}
&\sum_{j,A}\sum_{k,B}\Omega(\phi_{j,A},\phi_{k,B})\<\alpha_{a,A},\eta\> d(\<\alpha_{k,B},\eta\>)\nonumber\\&\quad=\sum_{j,A}\sum_{k,B}\Omega(\phi_{j,A},\phi_{k,B})\<\alpha_{a,A},\eta\>\( \<\eta,d\alpha_{k,B}\>+\<\alpha_{k,B},d\eta\>\).
\end{align}
Combining \eqref{17}, \eqref{18} and \eqref{19}, we have
\begin{align}\label{20}
&2(B(u)-B_0(u))=d\(\Omega(\Psi,\eta)+\sum_{k,B}\Omega(\Psi,\phi_{k,B})\<\alpha_{k,B},\eta\>\)\\&
\quad+\Omega\( 2\eta + \sum_{k,B}\<\alpha_{k,B},\eta\>\phi_{k,B}, d\Psi\)+\sum_{l,C}\sum_{k,B}\Omega(\phi_{l,C},\phi_{k,B})\<\alpha_{l,C},\eta\>\( \<\eta,d\alpha_{k,B}\>+\<\alpha_{k,B},d\eta\>\).\nonumber
\end{align}
Since $d\Psi=\sum_{j,A}D_{j,A}\Psi dz_{j,A}$, $d \alpha_{k,B}=\sum_{j,A}D_{j,A}\alpha_{k,B}dz_{j,A}$ and $\Omega(\eta, D_{j,A}\Psi)=\Omega(\eta, D_{j,A}(q_0+q_1+\psi))$ because $P_c\eta=\eta$, from \eqref{20}, we have
\begin{align}
B(u)-B_0(u)= d C + \< F_{j,A}, \eta\>dz_{j,A}+\<F,d\eta\>,
\end{align}
where
\begin{align}
C=&\frac 1 2 \(\Omega(\Psi,\eta)+\sum_{k,B}\Omega(\Psi,\phi_{k,B})\<\alpha_{k,B},\eta\>\),\nonumber\\
F_{j,A}=&-\im D_{j,A}(q_0+q_1+\psi)+\frac 1 2 \sum_{k,B}\Omega(\phi_{k,B},D_{j,A}\Psi)\alpha_{k,B}\label{21}\\&+\frac 1 2 \sum_{k,B}\sum_{l,C}\Omega(\phi_{k,B},\phi_{l,C})\<\eta,D_{j,A}\alpha_{l,C}\>\alpha_{k,B},\nonumber\\
F_\eta=&\sum_{k,B}\sum_{l,C}\Omega(\phi_{k,B},\phi_{l,C})\<\alpha_{k,B},\eta\>\alpha_{l,C}\label{22}
\end{align}
The estimates of \eqref{15.1} follows from \eqref{21} and \eqref{22} and Lemma \ref{lem:9}.
\end{proof}

By lemma \ref{11}, we have
\begin{align*}
\Omega-\Omega_0=d(B(u)-B_0(u))=d(dC+\Gamma)=d \Gamma.
\end{align*}
We set
\begin{align*}
\Omega_s=\Omega_0+s (\Omega-\Omega_0),
\end{align*}
and try to find a solution $\mathcal X^s$ of the equation $i_{\mathcal X^s}\Omega_s=-\Gamma$.
\begin{lemma}\label{lem:12}
Let $\delta>0$ sufficiently small.
Then, there exist $\mathcal X^s_\eta\in C^\omega(B_{\C^2\times P_cl_e^{-a_1}}(\delta);l_e^{a_1})$ and $\mathcal X^s_{j,A}\in C^\omega(B_{\C^2\times P_cl_e^{-a_1}}(\delta);\R)$ for $j=1,2$, $A=R,I$ s.t.
$\mathcal X^s := \sum_{j,A}\mathcal X^s_{j,A}\partial_{z_{j,A}}+\mathcal X^s_\eta \nabla_\eta$
satisfies $i_{\mathcal X^s}\Omega_s=-\Gamma$.
Further, we have
\begin{align*}
&\|\mathcal X^s_\eta\|_{l_e^{a_1}}+\sum_{j,A}|\mathcal X^s_{j,A}|\lesssim  |z|^{6}\|\eta\|_{l_e^{-a_1}}.
\end{align*}
\end{lemma}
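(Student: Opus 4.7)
My plan is to solve the pointwise linear equation $\Omega_s(\mathcal X^s,\cdot)=-\Gamma$ by inverting $\Omega_s$, viewed as a bilinear form on the tangent space $\R^4\times P_c l_e^{-a_1}$ (with values in $\R^4\times P_c l_e^{a_1}$). Writing $\mathcal X^s=\sum_{j,A}\mathcal X^s_{j,A}\partial_{z_{j,A}}+\mathcal X^s_\eta\nabla_\eta$ and testing $\Omega_s(\mathcal X^s,\cdot)$ against the basis vectors $\partial_{z_{k,B}}$ and $\nabla_\eta\zeta$ (with $\zeta\in P_c l_e^{-a_1}$), Lemma \ref{lem:11} gives the block system
\begin{align*}
\sum_{j,A} M_{(k,B),(j,A)}\mathcal X^s_{j,A}+\<N_{k,B},\mathcal X^s_\eta\>&=-\<F_{k,B},\eta\>,\\
\sum_{j,A}\mathcal X^s_{j,A}N_{j,A}+\im\mathcal X^s_\eta+s\,R_\eta(\mathcal X^s)&=-F_\eta,
\end{align*}
where $M_{(k,B),(j,A)}=\Omega_s(\partial_{z_{j,A}},\partial_{z_{k,B}})$, $N_{j,A}$ and $R_\eta$ encode the coupling and the $s\,d\Gamma$-correction to the $\eta$-block, and I used $\Omega(X_\eta,Y_\eta)=\<\im X_\eta,Y_\eta\>$.

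The leading structure is clear from freezing $s=0$ and $(z,\eta)=(0,0)$. Since $D_{j,A}\Psi|_{(0,0)}=\phi_{j,A}$, the $z$-block reduces to the constant antisymmetric matrix $M^{(0)}_{(k,B),(j,A)}=\Omega(\phi_{j,A},\phi_{k,B})$, which by $\<\phi_j,\phi_k\>=\delta_{jk}$ is (up to signs) the standard $4\times 4$ symplectic matrix and is invertible; the $\eta$-block reduces to $\im$ acting on $P_c l_e^{a_1}$, whose inverse is $-\im$ and preserves every weighted $l^2$ norm. Away from $(0,0,0)$ the perturbation $\Omega_s-\Omega_0|_{(0,0)}$ splits as $(\Omega_0-\Omega_0|_{(0,0)})+s(\Omega-\Omega_0)=(\Omega_0-\Omega_0|_{(0,0)})+s\,d\Gamma$. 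The first piece is controlled by the corrections $D_{j,A}(q_1+q_2+\psi)=O(|z|^2)$ coming from Proposition \ref{prop:1} and Theorem \ref{thm:1}; the second piece is controlled by Fr\'echet derivatives of $F_{j,A}$ and $F_\eta$, which are $O(|z|^6)$ by Lemma \ref{lem:11}. Thus, on $B_{\C^2\times P_c l_e^{-a_1}}(\delta)$ with $\delta$ small, $\Omega_s$ is an $O(|z|^2+\|\eta\|_{l_e^{-a_1}})$-perturbation of an invertible operator, and a Neumann series provides the bounded inverse uniformly in $s\in[0,1]$.

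Applying this inverse to the right-hand side and using the bounds $\|F_\eta\|_{l_e^{a_1}}\lesssim |z|^6\|\eta\|_{l_e^{-a_1}}$ and $|\<F_{k,B},\eta\>|\le\|F_{k,B}\|_{l_e^{a_1}}\|\eta\|_{l_e^{-a_1}}\lesssim |z|^6\|\eta\|_{l_e^{-a_1}}$ from Lemma \ref{lem:11} yields the claimed estimate
\[
\|\mathcal X^s_\eta\|_{l_e^{a_1}}+\sum_{j,A}|\mathcal X^s_{j,A}|\lesssim |z|^6\|\eta\|_{l_e^{-a_1}}.
\]
Real analyticity of $\mathcal X^s$ in $(z_1,z_2,\eta)$ (and jointly in $s$) is inherited from the real analyticity of $\Omega_s$ and $\Gamma$ together with analyticity of operator inversion on the set where the inverse exists.

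The main obstacle I expect is the mismatch of function spaces: the argument $\eta$ lies in the weak space $P_c l_e^{-a_1}$, yet the conclusion requires $\mathcal X^s_\eta$ in the strong space $P_c l_e^{a_1}$. This is only possible because Lemma \ref{lem:11} was stated with $F_\eta\in l_e^{a_1}$ and the leading-order $\eta$-block inverse (multiplication by $-\im$) preserves $l_e^{a_1}$; the Neumann corrections must then be checked to remain bounded as operators into $l_e^{a_1}$, which in turn hinges on $\alpha_{j,A}(z_1,z_2)\in l_e^{a_1}$ from Lemma \ref{lem:9} and the exponential-weight bounds on $q_j,\psi$.
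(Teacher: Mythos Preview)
Your proposal is correct and follows essentially the same approach as the paper: both set up the block system for $(\mathcal X^s_{j,A},\mathcal X^s_\eta)$ by testing $\Omega_s(\mathcal X^s,\cdot)=-\Gamma$ against $\partial_{z_{k,B}}$ and $\nabla_\eta\zeta$, then invert using that the leading part is the standard $4\times4$ symplectic matrix on the $z$-block and multiplication by $\im$ on the $\eta$-block. The only organizational difference is that the paper performs a Schur-complement elimination (solving the $\eta$-equation first for $\mathcal X^s_\eta$ in terms of the $\mathcal X^s_{k,B}$ via a Neumann series on the finite-rank operator $\xi\mapsto\Omega(\phi_{l,C},\phi_{r,D})\langle\alpha_{l,C},\xi\rangle\alpha_{r,D}$, then substituting into the $4\times4$ $z$-system), whereas you invoke a single Neumann series on the full block operator; both are equivalent here, and your final paragraph correctly identifies why the $\eta$-component lands in $l_e^{a_1}$ despite $\eta\in l_e^{-a_1}$.
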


\begin{proof}
We directly solve
\begin{align}\label{24}
\Omega_0(\mathcal X^s,\cdot)+s\(\Omega(\mathcal X^s,\cdot)-\Omega_0(\mathcal X^s,\cdot)\)=-\Gamma.
\end{align}
In the following, we omit the summation over $j=1,2,A=R,I$, etc.\ and
$j,k,l,r$ will always be $1,2$ and $A,B,C,D$ will be $R,I$.
First, we have
\begin{align*}
\Omega_0(\mathcal X^s,\cdot)&=\Omega(d\Psi(\mathcal X^s),d\Psi)+\Omega(\mathcal X^s_\eta,d\eta)\\&=
\Omega(D_{k,B}\Psi,D_{j,A}\Psi)\mathcal X^s_{k,B}dz_{j,A}+\Omega(\mathcal X^s_\eta,d\eta),
\end{align*}
and
\begin{align*}
\Omega(\mathcal X^s,\cdot)&=\Omega(d(\Psi+\eta+\<\alpha_{l,C},\eta\>\phi_{l,C})(\mathcal X^s),d(\Psi+\eta+\<\alpha_{r,D},\eta\>\phi_{r,D}))\\&
=\Omega(D_{k,B}\Psi \mathcal X^s_{k,B}+\mathcal X^s_\eta+\<D_{k,B}\alpha_{l,C},\eta\>)\phi_{l,C}\mathcal X^s_{k,B}+\<\alpha_{l,C},\mathcal X^s_\eta\>\phi_{l,C},\\&\quad\quad
D_{j,A}\Psi dz_{j,A}+d\eta+\<D_{j,A}\alpha_{r,D},\eta\>\phi_{r,D}dz_{j,A}+\<\alpha_{r,D},d\eta\>\phi_{r,D})\\&=
\Omega_0(\mathcal X^s,\cdot)+\(G_{j,A,k,B}\mathcal X^s_{k,B}+\<G_{j,A,\eta},\mathcal X^s_\eta\>\)dz_{j,A}-\mathcal X^s_{k,B}\<G_{k,B,\eta},d\eta\>\\&\quad+\Omega(\phi_{l,C},\phi_{r,D})\<\alpha_{l,C},\mathcal X^s_\eta\>\<\alpha_{r,D},d\eta\>
\end{align*}
where
\begin{align*}
G_{j,A,k,B}=&\Omega(D_{k,B}\Psi,\phi_{r,D})\<D_{j,A}\alpha_{r,D},\eta\>+\<D_{k,B}\alpha_{l,C},\eta\>\Omega(\phi_{l,C},D_{j,A}\Psi)\\&+\<D_{k,B}\alpha_{l,C},\eta\>\Omega(\phi_{l,C},\phi_{r,D})\<D_{j,A}\alpha_{r,D},\eta\>,\\
G_{j,A,\eta}=&-\im D_{j,A}(q_0+q_1+\psi)+\Omega(\phi_{l,C},D_{j,A}\Psi)\alpha_{l,C}+\Omega(\phi_{l,C},\phi_{r,D})\<D_{j,A}\alpha_{r,D},\eta\>\alpha_{l,C}.
\end{align*}
Therefore, \eqref{24} can be written as
\begin{align}
&\(\Omega(D_{k,B}\Psi,D_{j,A}\Psi)+sG_{j,A,k,B}\)\mathcal X^s_{k,B}+\<G_{j,A,\eta},\mathcal X^s_\eta\>=-\<F_{j,A},\eta\>\label{25}\\&
\im \mathcal X^s_\eta + s\(-\mathcal X^s_{k,B}G_{k,B,\eta}+\Omega(\phi_{l,C},\phi_{r,D})\<\alpha_{l,C},\mathcal X^s_\eta\>\alpha_{r,D}\)=-F_\eta,\label{26}
\end{align}
where $F_\eta$ and $F_{j,A}$ are given in Lemma \ref{lem:11}.
We first solve \eqref{26} fixing $|\mathcal X^s_{k,B}|\leq 1$.
Notice that \eqref{26} can be rewritten as
\begin{align*}
(1-\mathcal A)\mathcal X^s_\eta=-\im s \mathcal X^s_{k,B} G_{k,B,\eta}+\im F_\eta,
\end{align*}
where
\begin{align*}
\mathcal A\xi=\im s \Omega(\phi_{l,C},\phi_{r,D})\<\alpha_{l,C},\xi\>\alpha_{r,D}.
\end{align*}
Since $\|\mathcal A\|_{l_e^{a_1}\to l_e^{a_1}}\lesssim |z|^6$, we have $\|(1-\mathcal A)^{-1}\|_{\mathcal L(l_e^{a_1})}\lesssim 1$ by Lemma \ref{lem:9}.
Therefore, we have
\begin{align}\label{27}
\mathcal X^s_\eta=(1-\mathcal A)^{-1}\(-\im s \mathcal X^s_{k,B} G_{k,B,\eta}+\im F_\eta\).
\end{align}
Substituting, \eqref{27} into \eqref{25}, we have
\begin{align}\label{28}
&\(\Omega(D_{k,B}\Psi,D_{j,A}\Psi)+s G_{j,A,k,B}-\<G_{j,A,\eta},\im s(1-\mathcal A)^{-1})G_{k,B,\eta}\>\)\mathcal X^s_{k,B}\nonumber\\&\quad=-\<F_{j,A},\eta\>-\<G_{j,A,\eta},\im(1-\mathcal A)^{-1}F_\eta\>.
\end{align}
Considering $\Omega(D_{k,B}\Psi,D_{j,A}\Psi)+s G_{j,A,k,B}+\<G_{j,A,\eta},\im s(1-\mathcal A)^{-1})G_{\eta,k,B}\>$ as a $4\times 4$ matrix, this matrix has the form
\begin{align*}
\begin{pmatrix}
0 & -1 & 0 & 0 \\
1 & 0 & 0 & 0 \\
0 & 0 & 0 & -1 \\
0 & 0 & 1 & 0 
\end{pmatrix}+o(1).
\end{align*}
Since this matrix in invertible, we have the solution of \eqref{28}.
Therefore, we have the solution of \eqref{24}.
Further, we have
\begin{align*}
|\mathcal X^s_{k,B}|\lesssim \|F_{j,A}\|_{l_e^{a_1}}\|\eta\|_{l_e^{-a_1}}+\|G_{j,A,\eta}\|_{l^2}\|F\|_{l^2}\lesssim |z|^6 \|\eta\|_{l_e^{-a_1}},
\end{align*}
and
\begin{align*}
\|\mathcal X^s_\eta\|_{l_e^{a_1}}\lesssim |\mathcal X^s_{k,B}| \|G_{\eta,k,B}\|_{l_e^{a_1}} +\|F\|_{l_e^{a_1}}\lesssim |z|^6 \|\eta\|_{l_e^{-a_1}}.
\end{align*}
\end{proof}

We now construct the desired change of coordinate $\mathcal Y$ by the flow of $\mathcal X^s$.
We consider the following system
\begin{align}\label{29}
&\frac{d}{ds} r_{j}(z_1,z_2,\eta;s)=\mathcal X^s_{j,A}(z_1+r_1(z_1,z_2,\eta;s),z_2+r_2(z_1,z_2,\eta;s), \eta+r_\eta(z_1,z_2,\eta;s)),\\&
\frac{d}{d s}r_\eta(z_1,z_2,\eta;s) =\mathcal X^s_\eta(z_1+r_1(z_1,z_2,\eta;s),z_2+r_2(z_1,z_2,\eta;s),\eta+ r_\eta(z_1,z_2,\eta;s)),\label{30}
\end{align}
with $j=1,2$ and the initial condition $r_{1}(0)=0$, $r_{2}(0)=0$ and $r_\eta(0)=0$.
\begin{lemma}\label{lem:13}
Let $\delta>0$ sufficiently small.
Then, there exists $$(r_1,r_2,r_\eta)\in C^\omega(B_{\C^2\times P_cl_e^{-a_1}}(\delta);C([0,1];\C^2\times l_c^2)),$$ s.t.
$(r_1(z_1,z_2,\eta;\cdot),r_2(z_1,z_2,\eta;\cdot),r_\eta(z_1,z_2,\eta;\cdot))$ is the solution of system \eqref{29}--\eqref{30} and
\begin{align}
&\sum_{j=1,2}|r_{j}(z_1,z_2,\eta;1)|+
\|r_\eta(z_1,z_2,\eta;1)\|_{l_e^{a_1}}\lesssim |z|^{6}\|\eta\|_{l_e^{-a_1}}.\label{30.1}
\end{align}

\end{lemma}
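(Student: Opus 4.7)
My plan is to solve the coupled ODE system \eqref{29}--\eqref{30} by a standard Picard iteration in an appropriately chosen Banach space, with the bound \eqref{30.1} emerging from the pointwise control on the vector field provided by Lemma \ref{lem:12}. Fix $(z_1,z_2,\eta)$ in a small ball $B_{\C^2\times P_c l_e^{-a_1}}(\delta)$. Set $Y:=C([0,1];\C^2\times P_c l_e^{a_1})$ with the sup-in-$s$ norm and define $T\colon Y\to Y$ by
\begin{align*}
T(r_1,r_2,r_\eta)(s) := \int_0^s \bigl(\mathcal X^\sigma_{1,R}+\im\mathcal X^\sigma_{1,I},\ \mathcal X^\sigma_{2,R}+\im\mathcal X^\sigma_{2,I},\ \mathcal X^\sigma_\eta\bigr)\bigl(z_1+r_1(\sigma),z_2+r_2(\sigma),\eta+r_\eta(\sigma)\bigr)\, d\sigma.
\end{align*}
Note that the arguments of $\mathcal X^\sigma$ live in $\C^2\times P_c l_e^{-a_1}$ (using the continuous embedding $l_e^{a_1}\hookrightarrow l_e^{-a_1}$), while the output lands in $\C^2\times l_e^{a_1}$, so $T$ is well-defined as a self-map of $Y$.

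The first step is to show that $T$ leaves invariant the closed ball $B_Y(M)$ for a suitable $M\sim |z|^6\|\eta\|_{l_e^{-a_1}}$. By Lemma \ref{lem:12}, for any $(\tilde z_1,\tilde z_2,\tilde\eta)\in B_{\C^2\times P_c l_e^{-a_1}}(2\delta)$ we have
\begin{align*}
\sum_{j,A}|\mathcal X^\sigma_{j,A}(\tilde z_1,\tilde z_2,\tilde\eta)|+\|\mathcal X^\sigma_\eta(\tilde z_1,\tilde z_2,\tilde\eta)\|_{l_e^{a_1}} \lesssim (|\tilde z_1|+|\tilde z_2|)^6 \|\tilde\eta\|_{l_e^{-a_1}}.
\end{align*}
Hence, provided $M\leq \delta$ and $\delta$ is small enough that $(|z|+M,\,\|\eta\|_{l_e^{-a_1}}+M)$ stays comparable to $(|z|,\,\|\eta\|_{l_e^{-a_1}})$, one obtains $\|T(r)(s)\|\lesssim |z|^6\|\eta\|_{l_e^{-a_1}}$ uniformly in $s\in[0,1]$, so we may take $M=C|z|^6\|\eta\|_{l_e^{-a_1}}$ with a suitable $C$. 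The second step is contractivity: since $\mathcal X^\sigma$ is real analytic in its arguments by Lemma \ref{lem:12}, it is in particular $C^1$ with a derivative bound of order $|z|^5$ on the same ball, and integrating in $\sigma\in[0,1]$ gives
\begin{align*}
\|T(r)-T(\tilde r)\|_Y \lesssim |z|^5 \|r-\tilde r\|_Y,
\end{align*}
so for $\delta$ small $T$ is a contraction. Banach's fixed point theorem yields a unique solution $(r_1,r_2,r_\eta)\in B_Y(M)$, and the estimate \eqref{30.1} follows directly from $\|r\|_Y\leq M\lesssim |z|^6\|\eta\|_{l_e^{-a_1}}$ evaluated at $s=1$.

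The remaining point is the real analytic dependence of $(r_1,r_2,r_\eta)$ on $(z_1,z_2,\eta)$. Because the fixed point map $T=T_{(z_1,z_2,\eta)}$ depends real-analytically on the parameters $(z_1,z_2,\eta)$ (as $\mathcal X^\sigma\in C^\omega$ in its arguments and the integral in $\sigma$ preserves analyticity via uniform convergence of the power series), and because its differential at the fixed point is $I-DT$ with $\|DT\|\leq \tfrac12$ making $I-DT$ invertible, the analytic implicit function theorem in Banach spaces gives $(r_1,r_2,r_\eta)\in C^\omega(B_{\C^2\times P_c l_e^{-a_1}}(\delta);Y)$. The main subtlety I anticipate is bookkeeping: one must track that the argument $\eta+r_\eta(\sigma)$ remains in the open domain where $\mathcal X^\sigma$ is defined throughout $\sigma\in[0,1]$ and that $r_\eta$ stays in the closed subspace $P_c l_e^{a_1}$ (which is automatic once one verifies from the construction in Lemma \ref{lem:12} that $\mathcal X^s_\eta$ takes values in $P_c l_e^{a_1}$, since $\Gamma$ has no component along the discrete directions $\phi_{j,A}$ beyond those already accounted for by $dz_{j,A}$). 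Everything else reduces to the standard analytic Picard--Lindel\"of argument on a fixed time interval.
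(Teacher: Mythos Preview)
Your proposal is correct and follows essentially the same strategy as the paper: both solve the integral form of \eqref{29}--\eqref{30} by a fixed-point argument (you via Banach contraction, the paper via the implicit function theorem applied to the map $\Phi(z_1,z_2,\eta,x_1,x_2,\xi)(s)=(x_1,x_2,\xi)(s)-\int_0^s\mathcal X^\tau(\cdots)\,d\tau$), and both derive \eqref{30.1} from the pointwise bounds of Lemma~\ref{lem:12} plugged into the integral representation. The only cosmetic difference is that the paper peels off the variables one at a time ($\xi$, then $x_2$, then $x_1$) while you do them simultaneously, and the paper closes the estimate via a bootstrap on $A(s)=\sup_{\tau\le s}(|r_1|+|r_2|+\|r_\eta\|_{l_e^{a_1}})$ rather than directly via the radius $M$ of the invariant ball.
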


\begin{proof}
We solve the system \eqref{29}--\eqref{30} by implicit function theorem.
Let $\delta>0$ sufficiently small.
Let $(x_1,x_2,\xi)\in C([0,1];B_{\C^2\times l_c^2}(\delta))$ and set
\begin{align*}
&\Phi(z_1,z_2,\eta,x_1,x_2, \xi)(s)\\&\quad=\(\Phi_0(z_1,z_2,\eta,x_1,x_2, \xi)(s),\Phi_1(z_1,z_2,\eta,x_1,x_2, \xi)(s),\Phi_\eta(z_1,z_2,\eta,x_1,x_2, \xi)(s)\),
\end{align*}
where
\begin{align*}
&\Phi_j(z_1,z_2,\eta,x_1,x_2,\xi)(s)=x_j(s)-\int_0^s\mathcal X_j^{\tau}(z_1+x_1(\tau),z_2+x_2(\tau),\eta+\xi(\tau))\,d\tau,\quad j=1,2,\\&
\Phi_\eta(z_1,z_2,\eta,x_1,x_2,\xi)(s)=\xi(s)-\int_0^s\mathcal X_\eta^\tau(z_1+x_1(\tau),z_2+x_2(\tau),\eta+\xi(\tau))\,d\tau.
\end{align*}
Notice that $\Phi\in C^\omega(B_{\C^2\times P_cl_e^{-a_1}}(\delta)\times B_{C([0,1];\C^2\times P_cl_e^{-a_1})}(\delta);C([0,1];\C^2\times l_c^2))$.
By the estimate of lemma \ref{lem:12} and the analyticity of $\mathcal X_\eta^s$, we have
\begin{align*}
D_\xi \Phi_\eta(0,0,0,0,0,0)=\mathrm{id}_{C([0,1];l_c^2)}.
\end{align*}
Therefore, there exists $\tilde r_\eta(z_1,z_2,\eta,x_1,x_2)$ s.t.\ $\Phi_\eta(z_1,z_2,\eta,x_1,x_2,r_\eta(z_1,z_2,\eta,x_1,x_2))=0$.
Repeatedly, we will have $\tilde r_2(z_1,z_2,\eta,x_1)$ and $r_1(z_1,z_2,\eta)$ with desired property.
Therefore, setting $r_2=\tilde r_2(z_1,z_2,\eta,r_1(z_1,z_2,\eta))$ and $r_\eta(z_1,z_2,\eta)=\tilde r_\eta(z_1,z_2,\eta,r_1(z_1,z_2,\eta),\tilde r_2(z_1,z_2,\eta))$, we have the solution of \eqref{29}--\eqref{30}.
We now prove \eqref{30.1}.
From
\begin{align}
&r_j(z_1,z_2,\eta;s)=\int_0^s\mathcal X_j^{\tau}(z_1+r_1(z_1,z_2,\eta;\tau),z_2+r_2(z_1,z_2,\eta;\tau),\eta+r_\eta(z_1,z_2,\eta;\tau))\,d\tau,\label{30.3}\\&
r_\eta(z_1,z_2,\eta;s)=\int_0^s\mathcal X_\eta^{\tau}(z_1+r_1(z_1,z_2,\eta;\tau),z_2+r_2(z_1,z_2,\eta;\tau),\eta+r_\eta(z_1,z_2,\eta;\tau))\,d\tau,\label{30.4}
\end{align}
and Lemma \ref{lem:12}, setting $A(s):=\sup_{\tau\in[0,s]}\(|r_1(\tau)|+|r_2(\tau)|+\|r_\eta(\tau)\|_{l_e^{a_1}}\)$, we have
\begin{align}\label{30.5}
A(s)\leq \(|z|^6+A(s)^6\)(\|\eta\|_{l_e^{-a_1}}+A(s)).
\end{align}
Combining \eqref{30.5} with the fact $A(0)=0$, we have \eqref{30.1}.
\end{proof}

Now, define $\mathcal Y_s$ by 
\begin{align*}
\mathcal Y_s^*z_j=z_{j}+r_{j}(z_1,z_2,\eta;s), \quad j=1,2,\quad
\mathcal Y_s^*\eta = \eta+r_\eta(z_1,z_2,\eta;s).
\end{align*}
Then, $\mathcal Y_s$ satisfies
\begin{align*}
\frac {d}{ds}\mathcal Y_s = \mathcal X^s(\mathcal Y_s),
\end{align*}
which gives us the desired coordinate change by \eqref{15.01}.
Therefore, setting $\mathcal Y:=\mathcal Y_1$, 
we have
\begin{align*}
\mathcal Y^* \Omega=\Omega_0.
\end{align*}

We set $r_j(z_1,z_2,\eta):=r_j(z_1,z_2,\eta;1)$ for $j=1,2$ and $r_\eta(z_1,z_2,\eta):=r_\eta(z_1,z_2,\eta;1)$.

\begin{remark}\label{rem:3}
We will say $(z_1',z_2',\eta'):=(\mathcal Y^*z_1, \mathcal Y^*z_2, \mathcal Y^*\eta)=(z_1+r_1,z_2+r_2,\eta+r_\eta)$ is the "original" coordinate and $(z_1,z_2,\eta)$ is the "new" coordinate.
\end{remark}

We set the pull-back of the energy by $K$.
That is, we set
\begin{align*}
K(z_1,z_2,\eta)=E\circ \mathcal Y(z_1,z_2,\eta)=E(z_1+r_1,z_2+r_2,\eta+r_\eta).
\end{align*}
Now, we define the Hamiltonian vector field associated to $F$ with respect to the symplectic form $\Omega_0$.
We define $X_F$ by
\begin{align*}
\Omega_0(X_F,Y)=\<\nabla F, Y\>.
\end{align*}
Further, set $(X_F)_{j,A}:=dz_{j,A}X_F$ for $j=1,2$ and $A=R,I$ and $(X_F)_\eta:=d\eta(X_F)$.
Then, if $u$ is a solution of \eqref{1}, $z_{j,A}$ and $\eta$ satisfies
\begin{align*}
\dot z_{j,A} = (X_K)_{j,A},\quad j=1,2,\ A=R,I,\quad
\dot \eta = (X_K)_\eta.
\end{align*}
We now directly compute $(X_K)_\eta$.
By the definition of $X_K$, we have
\begin{align*}
\Omega_0(X_K,Y)=\sum_{j,A}\sum_{k,B}\Omega(D_{k,B}\Psi,D_{j,A}\Psi)(X_K)_{k,B}dz_{j,A}Y+\Omega((X_K)_\eta, d\eta Y),
\end{align*}
and
\begin{align*}
\<\nabla K, Y\>=\sum_{j,A}D_{j,A}F dz_{j,A}Y +\<\nabla_\eta F, d\eta Y\>.
\end{align*}
Therefore, we have
\begin{align}\label{30.51}
\dot \eta =(X_K)_\eta=-\im \nabla_\eta K.
\end{align}
We will postpone the computation of $(X_K)_{j,A}$.

Our next task is to compute the pull-back of the energy $K$.
Before computing, we make one observation.
The following lemma is corresponds to Lemma 4.11 (Cancellation Lemma) of \cite{CuMaAPDE}.
\begin{lemma}\label{lem:14}
Let $\delta>0$ sufficiently small.
Then, for any $(z_1,z_2)\in B_{\C^2}(\delta)$, we have
\begin{align*}
\nabla_\eta K(z_1,z_2,0)=0.
\end{align*}
\end{lemma}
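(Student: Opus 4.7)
The plan is to exploit the fact that $\mathcal Y$ is a symplectomorphism by \eqref{15.01} and transport the invariance of the quasi-periodic manifold under the DNLS flow into the new coordinates. The key geometric observation is that, after the Moser deformation of Lemma \ref{lem:13}, the zero section $\{\eta=0\}$ in the new coordinates coincides with the quasi-periodic manifold, and Theorem \ref{thm:1} says that this manifold is flow-invariant.

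First, I would verify that the submanifold $\{\eta=0\}$ in the new coordinates is exactly the quasi-periodic manifold. This follows from the bound \eqref{30.1} of Lemma \ref{lem:13}: when $\eta=0$ one has $r_1(z_1,z_2,0)=r_2(z_1,z_2,0)=0$ and $r_\eta(z_1,z_2,0)=0$, so the change of coordinates $\mathcal Y$ maps $(z_1,z_2,0)$ to the original-coordinate point $(z_1,z_2,0)$, which by Lemma \ref{lem:10} corresponds to $u=\Psi(z_1,z_2)$. Hence in both coordinate systems $\{\eta=0\}$ describes the same quasi-periodic manifold.

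Second, let $X_E$ denote the Hamiltonian vector field of $E$ with respect to $\Omega$, so that DNLS \eqref{1} reads $\dot u=X_E(u)$. Theorem \ref{thm:1} shows that if $u(0)=\Psi(z_{1,0},z_{2,0})$ then the solution stays on the quasi-periodic manifold for all time, so $X_E$ is tangent to $\{\eta'=0\}$. Because $\mathcal Y^*\Omega=\Omega_0$ and $K=\mathcal Y^*E$, a standard computation (using $\Omega(\mathcal Y_*X_K,\cdot)=(\mathcal Y^*\Omega)(X_K,\mathcal Y_*^{-1}\cdot)=\Omega_0(X_K,\mathcal Y_*^{-1}\cdot)=dK\circ\mathcal Y_*^{-1}=dE$) gives $\mathcal Y_*X_K=X_E$. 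Therefore $X_K$ is tangent to $\mathcal Y^{-1}(\{\eta'=0\})=\{\eta=0\}$, which forces $(X_K)_\eta(z_1,z_2,0)=0$.

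Finally, applying the identity $(X_K)_\eta=-\im\nabla_\eta K$ from \eqref{30.51} yields $\nabla_\eta K(z_1,z_2,0)=0$, as desired. The only delicate point is the identification $\mathcal Y_*X_K=X_E$; this is exactly the payoff of the Moser-type construction of $\mathcal Y$ in Lemmas \ref{lem:11}--\ref{lem:13}, so no new computation is required beyond invoking $\mathcal Y^*\Omega=\Omega_0$. No weighted estimates are needed here, since the argument is purely structural and is applied pointwise in $(z_1,z_2)$.
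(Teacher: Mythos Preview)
Your proof is correct and follows essentially the same route as the paper. Both arguments rest on the same three observations: (i) the estimate \eqref{30.1} forces $r_1=r_2=r_\eta=0$ when $\eta=0$, so $\mathcal Y$ fixes the zero section and it coincides with the quasi-periodic manifold; (ii) the quasi-periodic manifold is invariant under the DNLS flow by Theorem~\ref{thm:1}; and (iii) the identity \eqref{30.51} converts $(X_K)_\eta=0$ into $\nabla_\eta K=0$. Your presentation packages step~(ii) via the push-forward relation $\mathcal Y_*X_K=X_E$, whereas the paper argues directly that $\eta(t)\equiv 0$ along the transported flow and hence $\dot\eta|_{t=0}=0$; these are the same statement, so there is no substantive difference.
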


\begin{proof}
First, notice that if $\eta=0$, from Lemma \ref{lem:13}, we have $r_1=r_2=0$ and $r_\eta=0$.
Therefore, the new and original coordinate corresponds in this case.

Next, recall that if we have the initial condition $u(z_1',z_2',0)=\Psi(z_1',z_2')$, since $\Psi$ is the quasi-periodic solution, $\eta'$ will always be $0$.
Therefore, the new and original coordinate will correspond for all time and further, we will have $\eta=0$ for all time.

Now, suppose $\nabla_\eta K(z_1,z_2,0)\neq 0 $.
Then, from \eqref{30.51}, we have
\begin{align}\label{30.6}
\im \left.\frac{d}{dt}\right|_{t=0} \eta =\nabla _\eta K(z_1,z_2,0)\neq 0.
\end{align}
However, l.h.s.\ of \eqref{30.6} is $0$ because $\eta(t)\equiv 0$.
Therefore, we have the conclusion.
\end{proof}


We prepare another lemma before computing $K$.

\begin{lemma}\label{lem:15}
Set
\begin{align*}
&\delta \Psi(z_1,z_2,\eta):=\Psi(z_1+r_1(z_1,z_2,\eta),z_2+r_2(z_1,z_2,\eta))-\Psi(z_1,z_2),\\
& \delta\eta(z_1,z_2,\eta):=R[z_1+r_1(z_1,z_2,\eta),z_2+r_2(z_1,z_2,\eta)](\eta+r_\eta(z_1,z_2,\eta))-\eta.
\end{align*}
Then, we have
\begin{align}
&\|D^{\mathbf m}\delta\Psi(z_1,z_2,\eta)\|_{l_e^{a_1}}+\|D^{\mathbf m}\delta\eta(z_1,z_2,\eta)\|_{l_e^{a_1}}\lesssim|z|^{\max(6-|\mathbf m|)}\|\eta\|_{l_e^{a_1}},\label{30.7}\\
&\|D^{\mathbf m}D_\eta\delta\Psi(z_1,z_2,\eta)\|_{\mathcal L(l_e^{-a_1}; l_e^{a_1})}+\|D^{\mathbf m}D_\eta \delta\eta(z_1,z_2,\eta)\|_{\mathcal L(l_e^{-a_1}; l_e^{a_1})}\lesssim |z|^{\max(6-|\mathbf m|)},\nonumber\\&
\|D^{\mathbf m}D_\eta^2 \delta\Psi(z_1,z_2,\eta)(\xi_1,\xi_2)\|_{l_e^{a_1}}\lesssim |z|^{\max(6-|\mathbf m|)}\|\xi_1\|_{l_e^{-a_1}}\|\xi_1\|_{l_e^{-a_2}},\nonumber
\end{align}
where $\mathbf m=(m_1,\cdots,m_{|\mathbf m|})$ with $m_j\in \{(k,B)\ |\ k=1,2,B=R,I\}$ and $D^{\mathbf m}=D_{m_1}\cdots D_{m_{|\mathbf m|}}$.

\end{lemma}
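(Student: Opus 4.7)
My plan is to reduce all three estimates to Taylor expansion plus the chain rule, exploiting the real analyticity of all the constituents together with the quantitative bounds already established: Theorem \ref{thm:1} controls $\Psi$ and its derivatives, Lemma \ref{lem:9} controls $\alpha_{j,A}$, and Lemma \ref{lem:13} controls $r_1,r_2,r_\eta$, all analytically on $B_{\C^2\times P_cl_e^{-a_1}}(\delta)$.

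For the base case $|\mathbf{m}|=0$ I would use
\[
\delta\Psi = \int_0^1\sum_{j,A}D_{j,A}\Psi(z_1+tr_1,z_2+tr_2)\,r_{j,A}(z_1,z_2,\eta)\,dt,
\]
so the uniform $l_e^{a_1}$-bound on $D_{j,A}\Psi$ from Theorem \ref{thm:1} combined with $|r_{j,A}|\lesssim |z|^6\|\eta\|_{l_e^{-a_1}}\leq |z|^6\|\eta\|_{l_e^{a_1}}$ from Lemma \ref{lem:13} yields the desired bound on $\delta\Psi$. For $\delta\eta$, the formula of Lemma \ref{lem:9} unfolds to
\[
\delta\eta = r_\eta + \sum_{j,A}\langle \alpha_{j,A}(z+r),\eta+r_\eta\rangle\phi_{j,A},
\]
and the estimate follows from $\|\alpha_{j,A}(z+r)\|_{l_e^{a_1}}\lesssim |z|^6$ together with $\|r_\eta\|_{l_e^{a_1}}\lesssim |z|^6\|\eta\|_{l_e^{-a_1}}$. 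For general $|\mathbf{m}|$ and for the $D_\eta,D_\eta^2$ variants I would apply the multivariable chain rule to expand the derivatives as finite sums of products of derivatives of $\Psi$ (respectively $\alpha_{j,A}$) evaluated at $z+r$, which are uniformly bounded in $l_e^{a_1}$ by analyticity, times derivatives of $r_1,r_2,r_\eta$.

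The crucial input is that $r_j$ and $r_\eta$ vanish identically on $\{z=0\}$ and on $\{\eta=0\}$: this bi-vanishing is inherited from the bound $\|\mathcal X^s\|\lesssim |z|^6\|\eta\|_{l_e^{-a_1}}$ of Lemma \ref{lem:12} through uniqueness of solutions of the fixed-point equations \eqref{30.3}--\eqref{30.4}, since on either hyperplane the right-hand side admits the trivial zero solution. Combined with real analyticity on the ambient ball, this forces the Taylor expansions of $r_j,r_\eta$ in $(z_{1,R},z_{1,I},z_{2,R},z_{2,I},\eta)$ to contain only monomials of total $z$-degree $\geq 6$ and total $\eta$-degree $\geq 1$. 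Cauchy's integral formula on the polydisc of analyticity then gives
\[
\|D^{\mathbf{m}}D_\eta^k r_j(z,\eta)\|_{\mathcal L^k(l_e^{-a_1};\R)}\lesssim |z|^{\max(6-|\mathbf{m}|,0)}
\]
and the analogous bound for $r_\eta$, so each $z$-derivative strips one power of $|z|$ (until exhaustion) and each $\eta$-derivative converts the $\|\eta\|_{l_e^{-a_1}}$ factor into a multilinear action on test vectors. Assembling these via the chain rule yields all three inequalities stated in the lemma.

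The main technical obstacle is justifying this Taylor-vanishing structure at the required orders, since the pointwise bound of Lemma \ref{lem:13} does not by itself imply that all lower-order derivatives at the origin vanish. The cleanest remedy is the uniqueness argument above, which needs only that the Picard iteration of Lemma \ref{lem:13} produces the zero solution when restricted to $\{z=0\}$ or $\{\eta=0\}$. Once this structural fact is secured, the remainder is routine Cauchy-estimate bookkeeping within the real-analytic framework developed in section \ref{sec:darboux}, which is precisely why the proof is packaged into a single statement instead of handled term-by-term.
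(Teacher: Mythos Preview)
Your approach is the same as the paper's: the integral representation of $\delta\Psi$ and the expansion of $\delta\eta$ through $R[z+r]$ give the base case exactly as in the paper, and the paper then simply appeals to ``analyticity'' plus Lemma \ref{lem:13} for the higher derivatives, which your Cauchy-estimate argument makes explicit. One minor imprecision worth flagging: bi-vanishing on $\{z=0\}\cup\{\eta=0\}$ by itself only forces $z$-degree $\geq 1$, not $\geq 6$---what actually drives the power $|z|^{6-|\mathbf m|}$ is the \emph{quantitative} bound $\lesssim |z|^6\|\eta\|_{l_e^{-a_1}}$ of Lemma \ref{lem:13} (valid uniformly on the ball) applied via Cauchy on polydiscs of $z$-radius comparable to $|z|$, but since you invoke that bound anyway the argument goes through.
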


\begin{proof}
By the definition of $\delta\Psi$ and $\delta\eta$, we have
\begin{align*}
\delta \Psi(z_1,z_2,\eta) &= \sum_{j,A}\int_0^1 D_{j,A}\Psi(z_1+s r_1, z_2+s r_2)\,ds r_{j,A},\\
\delta \eta(z_1,z_2,\eta)&= r_\eta +\sum_{j,A}\<\alpha_{j,A}(z_1+r_1,z_2+r_2),\eta+r_\eta\>\phi_{j,A}.
\end{align*}
Combining the above with Lemma \ref{lem:13}, we have \eqref{30.7} with $|\mathbf m|=0$.
The estimates for the derivative respect to $D_{j,A}$ and $D_\eta$ also follows from Lemma \ref{lem:13} because of the analyticity.
\end{proof}

%
%

%
%

We now compute the expansion of $K$.
\begin{lemma}
We have
\begin{align}\label{31}
K(z_1,z_2,\eta)=E(\Psi(z_1,z_2))+E(\eta)+ \mathcal N(z_1,z_2,\eta),
\end{align}
where $\mathcal N$ satisfies
\begin{align}
|\mathcal N(z_1,z_2,\eta)|&\lesssim (|z|^6+\|\eta\|_{l^2}^6)\|\eta\|_{l_e^{-a_2}}^2,\label{32}\\
|D_{j,A}\mathcal N(z_1,z_2,\eta)|&\lesssim (|z|^5+\|\eta\|_{l^2}^5)\|\eta\|_{l_e^{-a_2}}^2,\label{33}\\
\|\nabla_\eta \mathcal N(z_1,z_2,\eta)\|_{l_e^{a_2}}&\lesssim (|z|^6+\|\eta\|_{l^2}^6)\|\eta\|_{l_e^{-a_2}},\label{34}
\end{align}
for $a_2=a_1/3$.
\end{lemma}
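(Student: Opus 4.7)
My plan is to re-express $K$ in the original coordinates $(z_1',z_2',\eta')=(z_1+r_1,z_2+r_2,\eta+r_\eta)$ of Remark \ref{rem:3}, where the orthogonality built into $R[\cdot]$ can be exploited. Writing $w':=R[z_1',z_2']\eta'$, we have $u=\Psi(z_1',z_2')+w'$ with $w'\in\mathcal H_c[z_1',z_2']$. Applying the expansion \eqref{9.02} of $E(\Psi(z')+w')$ around $\Psi(z')$, the only linear-in-$w'$ term $\langle H\Psi(z')+|\Psi(z')|^6\Psi(z'),w'\rangle$ is, by \eqref{9}, a real linear combination of $\langle\im D_{k,B}\Psi(z'),w'\rangle$; these vanish because $w'\in\mathcal H_c[z']$. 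Therefore
\[
K(z_1,z_2,\eta) = E(\Psi(z_1',z_2')) + E(w') + N(z_1',z_2',w'),
\]
and I split $\mathcal N=[E(\Psi(z'))-E(\Psi(z))]+[E(w')-E(\eta)]+N(z',w')=:\mathrm{I}+\mathrm{II}+\mathrm{III}$.

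Next, I observe that $\mathcal N(z_1,z_2,0)=0$: when $\eta=0$, Lemma \ref{lem:13} gives $r=r_\eta=0$, so $z'=z$ and $w'=0$, and each of I, II, III vanishes. Combined with Lemma \ref{lem:14}, this yields $\nabla_\eta\mathcal N(z_1,z_2,0)=0$. Taylor's formula with integral remainder then gives
\[
\mathcal N(z_1,z_2,\eta)=\int_0^1(1-s)\,D_\eta^2\mathcal N(z_1,z_2,s\eta)(\eta,\eta)\,ds,
\]
which automatically supplies the quadratic factor $\|\eta\|_{l_e^{-a_2}}^2$ in \eqref{32}, provided one bounds $D_\eta^2\mathcal N(z,\tilde\eta)$ as a bilinear form on $l_e^{-a_2}\times l_e^{-a_2}$ by $|z|^6+\|\tilde\eta\|^6$.

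The bound on $D_\eta^2\mathcal N$ is obtained piecewise. For III, each summand $\langle G_{k,i,j}(z'),(w')^i(\overline{w'})^j\rangle$ with $i+j=k\in\{2,\dots,7\}$ contains $8-k$ factors of $\Psi(z')$ and $k$ factors of $w'$; pairing two $w'$'s in the $l_e^{a_2}$--$l_e^{-a_2}$ duality (using the pointwise bound $e^{2a_2|n|}|\Psi(n)|^2\lesssim\|\Psi\|_{l_e^{a_1}}^2$, valid because $2a_2\le a_1$), and placing the remaining $k-2$ factors of $w'$ and $6-k$ factors of $\Psi(z')$ in $l^\infty$, yields a bound by $|z|^{8-k}\|\eta\|_{l^2}^{k-2}\|\eta\|_{l_e^{-a_2}}^2$, which Young's inequality with exponents $(6/(8-k),6/(k-2))$ controls by $(|z|^6+\|\eta\|^6)\|\eta\|_{l_e^{-a_2}}^2$. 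For II, write $w'=\eta+\delta\eta$ and expand
\[
E(w')-E(\eta) = \langle\eta,H\delta\eta\rangle + \tfrac{1}{2}\langle H\delta\eta,\delta\eta\rangle + \tfrac{1}{8}\bigl(\langle|w'|^6w',w'\rangle-\langle|\eta|^6\eta,\eta\rangle\bigr);
\]
the self-adjointness of $H$, boundedness of $H$ on $l_e^{a_1}$ (from $\sum\langle n\rangle^{1/2}|V(n)|<\infty$), and the estimate $\|\delta\eta\|_{l_e^{a_1}}\lesssim|z|^6\|\eta\|_{l_e^{-a_1}}$ of Lemma \ref{lem:15} give $|z|^6\|\eta\|_{l_e^{-a_2}}^2$ for the linear piece, with the nonlinear pieces handled similarly via $l^2\hookrightarrow l^8$. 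For I, Taylor expansion gives $\mathrm I=\sum_{j,A}\int_0^1 D_{j,A}E(\Psi(z+sr))\,ds\cdot r_{j,A}$; differentiating twice in $\eta$ and invoking the $D_\eta$ and $D_\eta^2$ bounds for $r$ (equivalently for $\delta\Psi,\delta\eta$ via Lemma \ref{lem:15}) together with $|r|\lesssim|z|^6\|\eta\|_{l_e^{-a_1}}$ yields the required bound. The estimates \eqref{33} and \eqref{34} on $D_{j,A}\mathcal N$ and $\nabla_\eta\mathcal N$ follow by differentiating the split representation once in the appropriate variable and reapplying the weighted bounds of Lemma \ref{lem:15}; the single $\|\eta\|_{l_e^{-a_2}}$ in \eqref{34} reflects that one power of $\eta$ has been used by $\nabla_\eta$, and the $|z|^5$ in \eqref{33} comes from $D_{j,A}$ hitting a $|z|^6$ factor in $r$, $r_\eta$, or $\delta\eta$.

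The main technical obstacle is the careful bookkeeping of weighted operator bounds on $D_\eta^2\mathcal N$: each of the two $\eta$-derivatives can fall on $r$, $r_\eta$, $\Psi(z')$, the coefficients $\alpha_{j,A}$, or the polynomial nonlinearity, and each configuration must be matched against the $l_e^{a_2}$--$l_e^{-a_2}$ duality using the $D^{\mathbf m}D_\eta^\bullet$ estimates of Lemma \ref{lem:15}; the choice $a_2=a_1/3$ leaves enough exponential decay in $\Psi$ to absorb up to three copies of the weight $e^{a_2|n|}$ that arise from the highest-order nonlinear terms ($k=7$ in III).
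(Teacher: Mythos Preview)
Your approach is correct and coincides with the paper's in all essential respects: both start from the orthogonality $w'=R[z']\eta'\in\mathcal H_c[z']$ to kill the linear-in-$w'$ term via \eqref{9}--\eqref{9.02}, arrive at the same three-term decomposition $\mathcal N=\mathrm{I}+\mathrm{II}+\mathrm{III}$ (which is precisely the paper's formula \eqref{35} after writing $\Psi(z')=\Psi+\delta\Psi$ and $w'=\eta+\delta\eta$), and then invoke Lemma~\ref{lem:14} and Lemma~\ref{lem:15} for the estimates.

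The only organizational difference is that you apply the second-order Taylor formula to $\mathcal N$ \emph{globally} (using $\mathcal N(z,0)=0$ and $\nabla_\eta\mathcal N(z,0)=0$) and then bound $D_\eta^2\mathcal N$ piecewise, whereas the paper estimates each term of \eqref{35} directly and reserves the Taylor-plus-cancellation argument for the single dangerous first-order piece $\langle\nabla E(\Psi),\delta\Psi\rangle$ (the other pieces already carry two factors of $\eta$ or $\delta\eta$). Your route is slightly cleaner conceptually---one mechanism for the quadratic factor---at the cost of tracking one extra $\eta$-derivative through every term; the paper's route avoids computing $D_\eta^2$ of terms that are visibly quadratic in $\eta$. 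Both give the same estimates, and your remark that $a_2=a_1/3$ is needed to absorb up to three copies of $e^{a_2|n|}$ (one from the $l_e^{a_2}$-norm in \eqref{34} plus two from the $l_e^{-a_2}$ duality, against a single $\Psi$ factor decaying like $e^{-a_1|n|}$ in the $k=7$ term of III) is exactly the reason the paper gives.
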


\begin{proof}
By Taylor expansion, we have
\begin{align*}
E(\Psi(z_1',z_2'))&=E(\Psi(z_1,z_2))+\int_0^1\<\nabla E(\Psi(z_1,z_2)+s \delta\Psi(z_1,z_2)),\delta\Psi(z_1,z_2)\>,ds,\\
E(R[z_1',z_2']\eta')&=E(\eta)+\int_0^1\<\nabla E(\eta + s \delta\eta(z_1,z_2,\eta)),\delta\eta(z_1,z_2,\eta)\>\,ds.
\end{align*}
Therefore, by \eqref{9.02}, we have \eqref{31}, with
\begin{align}\label{35}
\mathcal N(z_1,z_2,\eta)=&\int_0^1\<\nabla E(\Psi+s \delta\Psi),\delta\Psi\>\,ds+\int_0^1\<\nabla E(\eta + s \delta\eta),\delta\eta\>\,ds\\&+\sum_{k=2}^7\sum_{\substack{i+j=k\\i\geq j}}\sum_{l+r=8-k}C_{k,i,j,l,r}\<(\Psi+\delta\Psi)^l(\overline{\Psi}+\overline{\delta\Psi})^r,(\eta+\delta\eta)^i(\overline{\eta}+\overline{\delta\eta})^j\>.\nonumber
\end{align}
It suffices to estimate each terms of r.h.s.\ of \eqref{35}.
We first estimate the second term of r.h.s.\ of \eqref{35}.
\begin{align*}
\left|\int_0^1\<\nabla E(\eta+s \delta\eta),\delta\eta\>\,ds\right|&\leq \int_0^1 \|\nabla E(\eta+ s \delta\eta)\|_{l_e^{-a_1}}\,ds\|\delta\eta\|_{l_e^{a_1}}
\lesssim \|\eta\|_{l_e^{-a_1}}\|\delta\eta\|_{l_e^{a_1}}\\&
\lesssim |z|^6\|\eta\|_{l_e^{-a_1}}^2.
\end{align*}
One can also estimate the $D_{j,A}$ derivative of this term in similar manner.
We next compute the $\nabla_\eta$ derivative.
\begin{align*}
&\<\nabla_\eta \int_0^1\<\nabla E(\eta+s \delta\eta),\delta\eta\>\,ds,\xi\>=\<\nabla_\eta\int_0^1\<H\eta + sH \delta\eta+(\eta+s \delta \eta)^4(\overline \eta+ s\overline{\delta\eta})^3,\delta\eta\>\,ds,\xi\>\\&
=\<H\xi, \delta\eta\>+\frac 1 2 \<H D_\eta \delta\eta (\xi), \delta\eta\>+4\int_0^1\<\(\xi+s(D_\eta \delta\eta(\xi))\)|\eta+s \delta\eta|^6, \delta\eta\>\,ds\\&
+3\int_0^1\<\(\overline{\xi}+s\overline{D_\eta \delta \eta (\xi)}\)|\eta+s \delta\eta|^4(\eta+ s \delta\eta)^2,\delta\eta\>\,ds+\int_0^1\<\nabla E(\eta+s \delta\eta), D_\eta \delta\eta(\xi)\>\,ds.
\end{align*}
Therefore, we have
\begin{align*}
&\|\nabla_\eta \int_0^1\<\nabla E(\eta+s \delta\eta),\delta\eta\>\,ds\|_{l_e^{a_1}}\lesssim \|\delta\eta\|_{l_e^{a_1}}+\|D_\eta \delta\eta\|_{\mathcal L(l_e^{-a_1};l_e^{a_1})}\|\delta\eta\|_{l_e^{a_1}}\\&\quad+(1+\|D_\eta \delta\eta\|_{\mathcal L(l_e^{-a_1};l_e^{a_1})})(\|\eta\|_{l^\infty}^6+\|\delta\eta\|_{l^\infty}^6)\|\delta\eta\|_{l_e^{a_1}}+\|D_\eta \delta\eta\|_{\mathcal L(l_e^{-a_1};l_e^{a_1})}\int_0^1\|\nabla E(\eta+s \delta\eta )\|_{l_e^{-a_1}}\,ds\\&\lesssim
|z|^6\|\eta\|_{l_e^{-a_1}}.
\end{align*}
The third term of \eqref{35} can be bounded in similar manner.
However, for example, the estimate of $\<\Psi,|\eta|^7\eta\>$, we have
\begin{align*}
|\<\Psi,|\eta|^7\eta\>|\leq |z|\|\eta\|_{l^\infty}^5 \|\eta\|_{l_e^{a_1/3}}^2.
\end{align*}
This is why we have to make $a_1$ smaller and replace $|z|^j$ to $|z|^j+\|\eta\|_{l^2}^j$.

We finally estimate the first term of \eqref{35}.
Expanding $\nabla E(\Psi+s \delta\Psi)$, we have
\begin{align*}
\int_0^1\<\nabla E(\Psi+s \delta\Psi),\delta\Psi\>\,ds=&\quad\<\nabla E(\Psi), \delta\Psi\>+\frac 1 2 \<H \delta\Psi,\delta\Psi\>\\&+\int_0^1\<|\Psi+s \delta\Psi|^6(\Psi+\delta\Psi)-|\Psi|^6\Psi, \delta\Psi\>\,ds.
\end{align*}
The last two terms, which has at least two $\delta\Psi$ can be estimated as before.
Now, notice that the only possible source of the first order term of $\eta$ is $\<\nabla E(\Psi), \delta\Psi\>$.
However, by Lemma \ref{lem:14}, for arbitrary $\xi\in l_c^2$, we have
\begin{align*}
0=\<\nabla_\eta K(z_1,z_2,0),\xi\>=\<\nabla_\eta\<\nabla E(\Psi),\delta \Psi(z_1,z_2,0)\>,\xi\>=\<\nabla E(\Psi),D_\eta \delta\Psi(z_1,z_2,0)(\xi)\>
\end{align*}
Therefore, by Taylor expansion, we have
\begin{align*}
&\<E(\Psi(z_1,z_2)),\delta\Psi(z_1,z_2,\eta)\>=\int_0^1(1-s)\<E(\Psi),D_\eta^2\delta\Psi(z_1,z_2,s\eta)(\eta,\eta)\>\,ds,\\
&\<\nabla_\eta \<E(\Psi(z_1,z_2)),\delta\Psi(z_1,z_2,\eta)\>,\xi\>=\int_0^1\<E(\Psi),D_\eta^2\delta\Psi(z_1,z_2,s\eta)(\eta,\xi)\>\,ds.
\end{align*}
Thus, by Lemma \ref{lem:15}, we have
\begin{align*}
&|\<E(\Psi),\delta\Psi\>|\lesssim \sup_{s\in[0,1]}\|D_\eta^2\delta\Psi(z_1,z_2,s\eta)(\eta,\eta)\|_{l_e^{a_1}}\lesssim |z|^6 \|\eta\|_{l_e^{-a_1}}^2,\\&
\|\nabla_\eta\<E(\Psi),\delta\Psi\>\|_{l_e^{a_1}}\lesssim \sup_{s\in[0,1]}\|D_\eta \delta\Psi(z_1,z_2,s\eta)(\eta,\cdot)\|_{\mathcal L(l_e^{-a_1}; l_e^{a_1})}\lesssim |z|^6 \|\eta\|_{l_e^{-a_1}}.
\end{align*}
The estimate for $D_{j,A}\<E(\Psi),\delta\Psi\>$ can be obtained by similar manner.
Therefore, we have the conclusion.
\end{proof}


We now try to obtain the equations which $z_{j}$ satisfies.
Set
\begin{align*}
\{F,G\}:=dF(u)X_G(u)=\<\nabla F(u), X_G(u)\>=\Omega(X_F(u),X_G(u))
\end{align*}
for $F:l^2\to \C$.
Then, if $u$ is a solution of \eqref{1}, we have
\begin{align*}
\frac{d}{dt}F(u)=\{F,K\},
\end{align*}
Therefore, setting
\begin{align*}
K(z_1,z_2,\eta)=K_0(z_1,z_2)+K_1(z_1,z_2,\eta),
\end{align*}
where $K_0(z_1,z_2)=E(\Psi(z_1,z_2))$.
we have
\begin{align*}
\dot z_j =\{z_j,K_0\}+\{z_j, K_1\}.
\end{align*}
Now, since
\begin{align*}
\Omega_0(X_{K_n},Y)=dK_n(u)Y=\sum_{j=1,2,A=R,I}\partial_{z_{j,A}}K_n Y_{j,A}+\<\nabla _\eta K_n, Y_\eta\>,,
\end{align*}
and
\begin{align*}
\Omega_0(X_{K_n},Y)=\sum_{j,k=1,2,A,B=R,I}\Omega(D_{k,B}\Psi(z_1,z_2),D_{j,A}\Psi(z_1,z_2))(X_{K_n})_{k,B}(Y)_{j,A}+\Omega((X_{K_n})_\eta,Y_\eta),
\end{align*}
we have
\begin{align}\label{36}
(X_F)_z=\mathcal A(z_1,z_2)^{-1}\partial_z F,
\end{align}
where, 
\begin{align*}
(X_F)_z=
\begin{pmatrix}
(X_F)_{1,R}\\
(X_F)_{1,I}\\
(X_F)_{2,R}\\
(X_F)_{2,I}
\end{pmatrix},\quad
\partial_z F=
\begin{pmatrix}
D_{1,R}F\\
D_{1,I}F\\
D_{2,R}F\\
D_{2,I}F
\end{pmatrix},
\end{align*}
and
\begin{align*}
\mathcal A(z_1,z_2)=\begin{pmatrix}
a_{1,R,1,R}(z_1,z_2) & a_{1,I,1,R}(z_1,z_2) & a_{2,R,1,R}(z_1,z_2) & a_{2,I,1,R}(z_1,z_2) \\
a_{1,R,1,I}(z_1,z_2) & a_{1,I,1,I}(z_1,z_2) & a_{2,R,1,I}(z_1,z_2) & a_{2,I,1,I}(z_1,z_2) \\
a_{1,R,2,R}(z_1,z_2) & a_{1,I,2,R}(z_1,z_2) & a_{2,R,2,R}(z_1,z_2) & a_{2,I,2,R}(z_1,z_2) \\
a_{1,R,2,I}(z_1,z_2) & a_{1,I,2,I}(z_1,z_2) & a_{2,R,2,I}(z_1,z_2) & a_{2,I,2,I}(z_1,z_2) 
\end{pmatrix},
\end{align*}
where
$a_{j,A,k,B}(z_1,z_2)=\Omega(D_{j,A}\Psi(z_1,z_2),D_{k,B}\Psi(z_1,z_2))$.
Notice that since
\begin{align*}
\mathcal A(z_1,z_2)=
\begin{pmatrix}
0 & -1 & 0 & 0 \\
1 & 0 & 0 & 0 \\
0 & 0 & 0 & -1 \\
0 & 0 & 1 & 0 
\end{pmatrix}+o(1),
\end{align*}
$\mathcal A(z_1,z_2)$ is invertible

We will not compute $\{z_j,K_0\}$ directly but use the fact that $\Psi(z_1,z_2)$ is the solution of \eqref{1}.

\begin{lemma}
We have
\begin{align*}
\{z_j, K_0(z_1,z_2)\}=-\im \mathcal E_j(|z_1|^2,|z_2|^2)z_j
\end{align*}
\end{lemma}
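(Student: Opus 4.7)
The plan is to exploit that the quasi-periodic solution $\Psi(z_1,z_2)$ is an exact solution of \eqref{1} together with the fact that the Poisson bracket $\{z_j,K_0\}$ depends only on $(z_1,z_2)$.

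First I would observe, using formula \eqref{36}, that because $K_0$ is a function of $(z_1,z_2)$ alone, $\partial_z K_0$ depends only on $(z_1,z_2)$, hence $(X_{K_0})_z=\mathcal A(z_1,z_2)^{-1}\partial_z K_0$ also depends only on $(z_1,z_2)$. In particular $\{z_j,K_0\}=(X_{K_0})_{j}$ is a function of $(z_1,z_2)$ alone. So it suffices to evaluate the identity on any convenient slice, and the natural choice is $\eta=0$.

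Next I would use Lemma~\ref{lem:13} to note that at $\eta=0$ we have $r_1=r_2=0$, $r_\eta=0$, so the new and original coordinates coincide; equivalently, the point $(z_1,z_2,0)$ corresponds (via $\mathcal Y$) to $u=\Psi(z_1,z_2)$. By Theorem~\ref{thm:1}, this is precisely the quasi-periodic solution of \eqref{1} along which $\im\dot z_j=\mathcal E_j(|z_1|^2,|z_2|^2)z_j$, i.e.\ $\dot z_j=-\im\mathcal E_j z_j$.

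On the other hand, the flow of \eqref{1} in the new coordinates is Hamiltonian with Hamiltonian $K=K_0+K_1$ with respect to $\Omega_0$, so $\dot z_j=\{z_j,K_0\}+\{z_j,K_1\}$. By \eqref{36}, $\{z_j,K_1\}=(X_{K_1})_j=(\mathcal A^{-1}\partial_z K_1)_j$, and from \eqref{33} we have $|\partial_z K_1|=|\partial_z\mathcal N|\lesssim(|z|^5+\|\eta\|^5)\|\eta\|_{l_e^{-a_2}}^2$ (the $E(\eta)$ piece of $K_1$ is $\eta$-only and contributes nothing to $\partial_z K_1$), so $\{z_j,K_1\}$ vanishes when $\eta=0$. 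Evaluating $\dot z_j=\{z_j,K_0\}+\{z_j,K_1\}$ at $\eta=0$ therefore gives
\[
\{z_j,K_0\}(z_1,z_2)=\dot z_j\big|_{\eta=0}=-\im\mathcal E_j(|z_1|^2,|z_2|^2)z_j,
\]
which, since $\{z_j,K_0\}$ was already known to be independent of $\eta$, is the identity we wanted for all $(z_1,z_2)$ in the domain. I do not foresee a significant obstacle here; the only point requiring care is checking that the $K_1$-contribution to $\dot z_j$ genuinely vanishes at $\eta=0$, which is immediate from the quadratic-in-$\eta$ bound \eqref{33} on $\mathcal N$ together with the fact that $E(\eta)$ does not depend on $z$.
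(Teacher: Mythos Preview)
Your proof is correct and follows essentially the same approach as the paper's: both evaluate $\dot z_j=\{z_j,K_0\}+\{z_j,K_1\}$ at $\eta=0$, use Lemma~\ref{lem:13} to identify the new and original coordinates there so that $\dot z_j=-\im\mathcal E_j z_j$, observe via \eqref{36} that $\{z_j,K_1\}|_{\eta=0}=0$, and conclude by the $\eta$-independence of $\{z_j,K_0\}$ (which you state up front while the paper states it at the end). The only cosmetic difference is that you invoke the quadratic bound \eqref{33} to kill $\partial_z K_1$ at $\eta=0$, whereas the paper simply remarks that $D_{k,B}K_1$ vanishes there; both justifications are valid.
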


\begin{proof}
First, if $\eta=0$, we have $z_0'=z_0$ and $z_1'=z_1$.
Therefore, since $\Psi(z_1,z_2)$ is the solution \eqref{1} if $\im \dot z_j = \mathcal E(|z_1|^2,|z_2|^2) z_j$, we have
\begin{align*}
-\im \E(|z_1|^2,|z_2|^2)z_j=\dot z_j =\{ z_j, K (z_1,z_2,\eta)\}_{\eta=0}=\{ z_j, K_0 (z_1,z_2)\}_{\eta=0}+\{ z_j, K_1 (z_1,z_2,\eta)\}_{\eta=0}.
\end{align*}
On the other hand, from \eqref{36}, we see that $\{ z_j, K_1 (z_1,z_2,\eta)\}_{\eta=0}=0$ because it consists from the $D_{k,B}$ derivative or $K_1$ which is $0$ if $\eta=0$.
Therefore, we have
\begin{align*}
-\im \E(|z_1|^2,|z_2|^2)z_j=\{ z_j, K_0 (z_1,z_2)\}_{\eta=0}.
\end{align*}
Finally, since the symplectic form $\Omega_0$ do not depend on $\eta$ (although it depends on $z_j$), we have the conclusion.
\end{proof}

We set $R_j=\{z_j,K_1(z_1,z_2,\eta)\}$.
Then, by \eqref{36}, we have $R_j=\{z_j, \mathcal N(z_1,z_2,\eta)\}$.
Futher, combining \eqref{36} with \eqref{33}, we have
\begin{align}\label{37}
|R_j|\lesssim (|z|^5+\|\eta\|_{l^2})\|\eta\|_{l_e^{-a_1}}.
\end{align}

As a conclusion of this section, we have the equations of $z_j$ and $\eta$.
\begin{align}
&\im \eta_t = H\eta + P_c\(|\eta|^6\eta + \nabla_\eta\mathcal N\),\label{40}\\&
\dot z_j=-\im \E(|z_1|^2,|z_2|^2)z_j+R_j,\quad j=1,2\label{41}.
\end{align}

\section{Linear estimates}\label{sec:linest}

In this section, we introduce the linear estimates for the proof of Theorem \ref{thm:2}.
Lemmas \ref{lem:l0}--\ref{lem:l3} can be found in \cite{CT09SIAM}.
See also \cite{PS08JMP} and \cite{KPS09SIAM}.
In the following we always assume $H$ is generic in the sense of Lemma 5.3 of \cite{CT09SIAM}.

\begin{definition}
We say the pair of numbers $(r,p)$ is admissible if
\begin{align*}
\frac 2 r + \frac 1 p = \frac 1 2 ,\quad (r,p)\in [4,\infty]\times [2,\infty].
\end{align*}
We set
\begin{align*}
X_{r,p}:=l^{\frac 3 2 r}(\Z, L_t^\infty([n,n+1], l^p)),\quad X_{r,p}'=l^{\(\frac 3 2 r\)'}(\Z, L_t^1([n,n+1], l^{p'})),
\end{align*}
where $p'$ is the H\"older conjugate of $p$ (i.e. $\frac 1 p + \frac{1}{p'}=1$).
\end{definition}


\begin{lemma}[Dispersive estimate]\label{lem:l0}
We have
\begin{align*}
\|e^{-\im t H}P_c\|_{\mathcal L(l^1;l^\infty)}\lesssim \<t\>^{-1/3}.
\end{align*}
\end{lemma}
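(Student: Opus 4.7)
The plan is to proceed in the classical two-step fashion: first establish the free dispersive estimate for $e^{\im t\Delta}$, then transfer it to $e^{-\im tH}P_c$ via a spectral/resolvent representation, invoking the genericity hypothesis on $H$ to control the spectral edges. Since this estimate is attributed to Cuccagna--Tarulli, the end of the argument should reduce to a citation of \cite{CT09SIAM}, but I sketch how one would carry it out directly.

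For the free piece, I would use the discrete Fourier transform on $\T=[-\pi,\pi]$: writing
\begin{align*}
(e^{\im t\Delta}f)(n)=\frac{1}{2\pi}\int_{-\pi}^{\pi} e^{\im[t(2\cos\xi-2)+n\xi]}\,\hat f(\xi)\,d\xi,
\end{align*}
the $l^1\to l^\infty$ bound reduces to showing that the oscillatory integral with $\hat f\equiv 1$ is $O(\<t\>^{-1/3})$ uniformly in $n$. The phase $\phi(\xi)=t(2\cos\xi-2)+n\xi$ has $\phi''(\xi)=-2t\cos\xi$, which vanishes at $\xi=\pm\pi/2$ while $\phi'''=\pm 2t\neq 0$ there; so a standard van der Corput estimate of order $3$, split around a neighbourhood of $\pm\pi/2$ and combined with the usual stationary phase away from it, yields the $|t|^{-1/3}$ decay. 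The short-time bound $\<t\>^{-1/3}\lesssim 1$ is trivial from $\|e^{\im t\Delta}\|_{\mathcal L(l^\infty)}\leq 1$.

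For the perturbed evolution, I would invoke Stone's formula
\begin{align*}
e^{-\im tH}P_c=\frac{1}{2\pi\im}\int_{0}^{4}e^{-\im t\lambda}\bigl[R_H^+(\lambda)-R_H^-(\lambda)\bigr]\,d\lambda,
\end{align*}
and then use the second resolvent identity $R_H^\pm(\lambda)=(1+R_0^\pm(\lambda)V)^{-1}R_0^\pm(\lambda)$ to pull the estimates back to the free resolvent. The genericity assumption from \cite{CT09SIAM} is exactly the statement that $1+R_0^\pm(\lambda)V$ is invertible at the spectral endpoints $\lambda=0$ and $\lambda=4$ on the appropriate weighted $l^{2,\sigma}$ spaces (no zero-energy resonance, no embedded eigenvalue at the edges). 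This uniformises the weighted resolvent bound over $[0,4]$ and allows one to reinsert the spectral decomposition of the free propagator.

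The main obstacle is precisely the endpoint behaviour at $\lambda=0$ and $\lambda=4$: these are the frequencies at which $\phi''=0$, so the free oscillatory integral is only saturating the $|t|^{-1/3}$ decay (not the faster $|t|^{-1/2}$ typical of non-degenerate 1D phases), and they are simultaneously the points where $R_0^\pm(\lambda)$ fails to be bounded on $l^2$ without weights. It is for exactly this reason that the $\<t\>^{-1/3}$ rate, rather than $\<t\>^{-1/2}$, is sharp, and it is the genericity hypothesis that prevents the decay from being degraded further by a resonance of $H$ at either edge. Once this is in hand, a partition of unity in $\lambda$ combined with a stationary phase argument near the endpoints (as carried out in \cite{CT09SIAM,PS08JMP,KPS09SIAM}) delivers the stated bound.
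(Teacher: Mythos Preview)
Your proposal is correct, and in fact it goes well beyond what the paper does: the paper gives no proof at all for this lemma, merely stating at the start of Section~\ref{sec:linest} that Lemmas~\ref{lem:l0}--\ref{lem:l3} ``can be found in \cite{CT09SIAM}'' (with further pointers to \cite{PS08JMP,KPS09SIAM}), under the standing genericity assumption on $H$. Your sketch --- van der Corput of order three for the free propagator near the degenerate phase points $\xi=\pm\pi/2$, then Stone's formula plus the second resolvent identity with the genericity hypothesis handling the spectral edges --- is exactly the argument carried out in those references, so your proposal is consistent with (and more informative than) the paper's treatment.
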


\begin{lemma}[Strichartz estimate]\label{lem:l1}
Let $(p,r)$, $(p_1,r_1)$ and $(p_2,r_2)$ admissible.
Then, we have
\begin{align*}
\|e^{-\im t H}P_c f\|_{X_{r,p}}\lesssim \|f\|_{l^2},
\end{align*}
and
\begin{align*}
\left\|\int_0^t e^{-\im (t-s)H}P_cg(s)\,ds\right\|_{X_{r_1,p_1}}\lesssim \|g\|_{X_{r_2,p_2}'}.
\end{align*}
\end{lemma}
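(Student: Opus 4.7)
The plan is to adapt the standard Keel--Tao Strichartz machinery to the discrete setting, using Lemma \ref{lem:l0} together with the trivial $l^2$ conservation $\|e^{-\im tH}P_cf\|_{l^2}\lesssim\|f\|_{l^2}$. The key structural feature to exploit is that the time-localized norm of $X_{r,p}$, namely $L^\infty$ on unit intervals followed by $l^{3r/2}$ summation over the integers, plays the role that $L^q_t$ would play with $q=3r/2$ in the standard framework, while being robust enough to absorb the slow $\langle t\rangle^{-1/3}$ decay.

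First I would interpolate Lemma \ref{lem:l0} with $l^2$ conservation to obtain, for any $p\in[2,\infty]$,
\begin{equation*}
\|e^{-\im tH}P_c\|_{l^{p'}\to l^p}\lesssim\langle t\rangle^{-(1-2/p)/3}.
\end{equation*}
For an admissible pair the identity $1-2/p=4/r$ converts this to decay $\langle t\rangle^{-4/(3r)}$. Next, by the $TT^*$ argument, proving the homogeneous estimate reduces to the bilinear bound
\begin{equation*}
\Bigl\|\int_{\R} e^{-\im(t-s)H}P_cg(s)\,ds\Bigr\|_{X_{r,p}}\lesssim\|g\|_{X_{r,p}'}.
\end{equation*}
Applying Minkowski and the interpolated dispersive bound, then introducing the sequences $F(n):=\sup_{t\in[n,n+1]}\|u(t)\|_{l^p}$ and $G(m):=\int_m^{m+1}\|g(s)\|_{l^{p'}}\,ds$, the problem reduces to the discrete convolution inequality
\begin{equation*}
F(n)\lesssim\sum_{m\in\Z}\langle n-m\rangle^{-4/(3r)}G(m).
\end{equation*}
Since $\{\langle n\rangle^{-4/(3r)}\}_{n\in\Z}$ belongs to weak-$l^{3r/4}$ whenever $r<\infty$, the one-dimensional discrete Hardy--Littlewood--Sobolev inequality yields $\|F\|_{l^{3r/2}}\lesssim\|G\|_{l^{(3r/2)'}}$, which is precisely the claim; the degenerate case $r=\infty$, $p=2$ is just $l^2$ conservation.

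For the inhomogeneous estimate with possibly distinct admissible pairs $(r_1,p_1)$ and $(r_2,p_2)$, I would first establish the untruncated version (with $\int_\R$ in place of $\int_0^t$) by the same Minkowski--HLS scheme applied to the mixed-pair kernel, and then remove the restriction $s\leq t$ by invoking the Christ--Kiselev lemma, which applies because $3r_1/2>(3r_2/2)'$ holds strictly in the non-degenerate regime. The principal obstacle I expect is the endpoint $(r,p)=(4,\infty)$: here the kernel $\langle n\rangle^{-1/3}$ sits on the strong-type HLS boundary and the convolution barely maps $l^{6/5}\to l^6$. Verifying that the $L^\infty_t$ unit-interval structure of $X_{4,\infty}$ is precisely what makes this endpoint accessible is the technical heart of the argument; indeed this is the reason the paper works with $l^{3r/2}_nL^\infty_t([n,n+1],l^p)$ rather than $L^r_tl^p$, since the latter formulation would fail at that endpoint.
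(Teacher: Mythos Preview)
The paper does not prove this lemma; it simply cites \cite{CT09SIAM} (with \cite{PS08JMP} and \cite{KPS09SIAM} as additional references) at the beginning of Section~\ref{sec:linest}. Your sketch is essentially the argument that lies behind that citation: interpolate the dispersive bound of Lemma~\ref{lem:l0} with $l^2$ conservation, run $TT^*$, bin time into unit intervals to reduce to a discrete Hardy--Littlewood--Sobolev inequality for the kernel $\langle n\rangle^{-4/(3r)}$, and then recover the retarded inhomogeneous estimate via Christ--Kiselev. So the approach is correct and matches the source the paper defers to.

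One minor correction to your closing commentary: the pair $(r,p)=(4,\infty)$ is not actually a borderline HLS case. The kernel $\langle n\rangle^{-1/3}$ lies in weak-$l^3$, and weak-Young/HLS gives $l^{6/5}\to l^6$ with the strict conditions $1<6/5<6<\infty$ satisfied, so nothing delicate happens there. The genuine reason for working with $X_{r,p}=l^{3r/2}_n L^\infty_t([n,n+1],l^p)$ rather than a pure $L^q_t l^p$ space is that the discrete dispersive decay is $\langle t\rangle^{-1/3}$ rather than $|t|^{-1/3}$: it is bounded near $t=0$, so there is no short-time gain and the classical Keel--Tao scaling does not apply directly. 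Binning into unit intervals is what separates the local $l^2$-boundedness from the large-time decay, and this is a feature of the entire discrete-Schr\"odinger framework, not of the endpoint pair specifically.
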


\begin{lemma}[Kato Smoothing]\label{lem:l2}
Let $\sigma>1$.
Then, we have
\begin{align*}
\|e^{-\im t H}P_c f\|_{L^tl^{2,-\sigma}}\lesssim \|f\|_{l^2},
\end{align*}
and
\begin{align*}
\left\| \int_0^te^{-\im(t-s)H}P_c g(s)\,ds\right\|_{L^2_tl^{2,-\sigma}}\lesssim\|g\|_{L^2l^{2,\sigma}}.
\end{align*}
\end{lemma}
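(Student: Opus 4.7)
The plan is to prove both bounds simultaneously through a $TT^*$ argument anchored on a limiting absorption principle (LAP) for $H$. Let $T : l^2 \to L^2_t(\R; l^{2,-\sigma})$ be defined by $Tf(t) := \<n\>^{-\sigma} e^{-\im tH} P_c f$. By standard duality, the first (homogeneous) bound is equivalent to $TT^*$ being bounded on $L^2_t(\R;l^2)$, where
\[
TT^* g(t) = \<n\>^{-\sigma}\int_\R e^{-\im(t-s)H} P_c \<n\>^{-\sigma} g(s)\,ds.
\]
Thus proving the homogeneous estimate yields the inhomogeneous \emph{nonretarded} version automatically, and the retarded version (with integration restricted to $s<t$) then follows from the Christ--Kiselev lemma since we are comfortably off the endpoint ($L^2_t$ on both sides with a non-delta kernel).

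The first main step is to establish the LAP: for $\sigma>1/2$, the boundary values $R^\pm(\lambda) := (H-\lambda\mp \im 0)^{-1}$ exist as bounded maps $l^{2,\sigma} \to l^{2,-\sigma}$ and are continuous in $\lambda$ on $[0,4]$; away from $[0,4]$ ordinary resolvent bounds hold. For the unperturbed operator $-\Delta$ on $\Z$ this is an explicit computation via the discrete Fourier transform, with Green's function behaving like $\lambda^{-1/2}$ and $(4-\lambda)^{-1/2}$ near the thresholds. One then transfers this to $H=-\Delta+V$ by the standard resolvent identity $R^\pm_H = R^\pm_{-\Delta}(1+VR^\pm_{-\Delta})^{-1}$, with invertibility of $1+VR^\pm_{-\Delta}$ on the interior of $[0,4]$ following from the absence of embedded eigenvalues (which is part of the ``generic'' hypothesis in Lemma~5.3 of \cite{CT09SIAM}), and invertibility at the thresholds $0$ and $4$ following from the absence of threshold resonances (the other part of genericity).

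With the LAP in hand, the second step is Stone's formula: for $f\in l^2$ with sufficient decay,
\[
e^{-\im tH} P_c f = \frac{1}{2\pi \im}\int_0^4 e^{-\im t\lambda}\bigl(R^+(\lambda)-R^-(\lambda)\bigr) f\,d\lambda.
\]
Applying Plancherel in $t$ and the LAP gives
\[
\int_\R \|\<n\>^{-\sigma} e^{-\im tH} P_c f\|_{l^2}^2\,dt \lesssim \int_0^4 \|\<n\>^{-\sigma}(R^+(\lambda)-R^-(\lambda))\<n\>^{-\sigma}\|_{l^2\to l^2}\|\<n\>^\sigma f\|_{l^2}\|f\|_{l^2}\,d\lambda,
\]
and density together with the uniform LAP bound closes the homogeneous estimate. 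The extra margin $\sigma>1$ (rather than the ``free'' threshold $\sigma>1/2$) is taken to absorb the $\lambda^{-1/2}$-type singularity of the spectral density at the endpoints $0$ and $4$ of the continuous spectrum without any cancellation argument.

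The main obstacle is the LAP at the thresholds $\lambda\in\{0,4\}$: the free resolvent kernel $R^\pm_{-\Delta}(\lambda)(n,m)$ diverges like $(\lambda(4-\lambda))^{-1/2}$ there, and one needs the generic assumption to guarantee that $1+VR^\pm_{-\Delta}$ remains invertible in the correct weighted spaces at these points. Once this is handled as in \cite{CT09SIAM} (see also \cite{KPS09SIAM,PS08JMP}), the remainder of the proof is standard spectral calculus and functional-analytic duality.
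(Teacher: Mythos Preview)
The paper does not prove this lemma; it simply cites \cite{CT09SIAM} (see the sentence preceding Lemma~\ref{lem:l0}). Your outline via the limiting absorption principle and $TT^*$ is the standard route and is essentially what is carried out there and in \cite{KPS09SIAM,PS08JMP}.

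There is, however, a genuine gap in your argument. You claim the retarded inhomogeneous estimate ``follows from the Christ--Kiselev lemma since we are comfortably off the endpoint ($L^2_t$ on both sides).'' This is backwards: the Christ--Kiselev lemma requires strict inequality $p<q$ between the time exponents and \emph{fails} precisely when $p=q=2$ (the Hilbert transform is the classical counterexample). Here both the input and output norms are $L^2_t$, so you are exactly at the forbidden case and Christ--Kiselev gives you nothing.

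The retarded bound is still true, but it must be obtained directly, as in Kato's original smoothing theorem. One way: the time-Fourier transform of $\chi_{\{t>0\}}\<n\>^{-\sigma}e^{-\im tH}P_c\<n\>^{-\sigma}$ equals, up to a constant, $\<n\>^{-\sigma}R^+(\lambda)P_c\<n\>^{-\sigma}$, whose uniform boundedness in $\lambda$ is precisely the LAP you have already established; Plancherel then gives the $L^2_t\to L^2_t$ bound for the convolution over $(-\infty,t]$, and the passage from $\int_{-\infty}^t$ to $\int_0^t$ costs only one application of the homogeneous estimate and its dual. Equivalently, once you have verified that $\<n\>^{-\sigma}$ is $H$-smooth in Kato's sense, both the homogeneous and the retarded inhomogeneous bounds are part of the conclusion of his theorem---no truncation lemma is needed.
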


\begin{lemma}\label{lem:l3}
Let $\sigma>1$ and $(r,p)$ admissible.
Then, we have
\begin{align*}
\left\|\int_0^t e^{-\im(t-s)H}P_cg(s,\cdot)\,ds\right\|_{X_{r,p}}\lesssim \|g\|_{L^2_t l^{2,\sigma}}.
\end{align*}
\end{lemma}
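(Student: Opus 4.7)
The plan is to prove this as a hybrid (mixed) Strichartz--Kato smoothing estimate, obtained by a $TT^*$-type composition of the two preceding lemmas, followed by a Christ--Kiselev argument to pass from the full-line (non-retarded) integral to the truncated integral $\int_0^t$.

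First, I would introduce the two ``half'' operators. Let $T_1 : l^2 \to X_{r,p}$ be defined by $T_1 f := e^{-\im t H}P_c f$, which is bounded by the homogeneous Strichartz estimate of Lemma \ref{lem:l1}. Next, consider the formal adjoint
\begin{align*}
T_2 g := \int_{\R} e^{\im s H}P_c g(s,\cdot)\,ds.
\end{align*}
The dual Kato smoothing estimate stated in Lemma \ref{lem:l2} (the second inequality, or rather its endpoint version obtained by duality from the first inequality) tells us exactly that $T_2 : L^2_t l^{2,\sigma} \to l^2$ is bounded whenever $\sigma>1$. Indeed, by duality $(l^{2,-\sigma})^* = l^{2,\sigma}$ and $e^{-\im tH}P_c: l^2\to L^2_t l^{2,-\sigma}$ being bounded is exactly the dual statement of $T_2$ boundedness.

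Composing, the operator $T_1 T_2 : L^2_t l^{2,\sigma} \to X_{r,p}$ is bounded, and a direct computation gives
\begin{align*}
(T_1 T_2 g)(t,\cdot) = \int_{\R} e^{-\im(t-s)H}P_c g(s,\cdot)\,ds.
\end{align*}
So the non-retarded version of the desired inequality holds for free. The remaining step is to replace $\int_{\R}$ by $\int_0^t$. This is the only nontrivial point, and it is exactly what the Christ--Kiselev lemma is designed for: since the time exponent $2$ on the source side is strictly less than the time exponent $r \geq 4$ on the $X_{r,p}$ target side (and likewise the spatial $l^{3r/2}(\Z,L^\infty_t)$ norm is dominated by the corresponding $L^\infty_t l^{3r/2}$ norm upon duality with mild care), the restriction to $s<t$ is bounded by the same constant up to a universal multiplicative factor.

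The main obstacle, as usual in such arguments, is the low-regularity end of the admissible range. For $(r,p)$ with $r$ strictly greater than $2$ (which is guaranteed here since $r\geq 4$), Christ--Kiselev applies cleanly, so no endpoint issue arises. A minor bookkeeping point is that $X_{r,p}$ involves the $l^{3r/2}_n(L^\infty_t)$ reordering rather than the plain $L^r_t l^p_x$ mixed norm; one handles this by dominating $L^\infty_t([n,n+1])$ pointwise in $n$ using the Duhamel representation and then applying Christ--Kiselev in the outer $l^{3r/2}$ summation, which is permissible since $3r/2>2$ as well. Once Christ--Kiselev is applied, the stated estimate follows with the constant controlled by the product of the Strichartz constant from Lemma \ref{lem:l1} and the Kato smoothing constant from Lemma \ref{lem:l2}.
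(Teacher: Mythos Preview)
The paper does not give its own proof of this lemma; it simply records it among the linear estimates borrowed from \cite{CT09SIAM} (see the sentence preceding Lemma~\ref{lem:l0}). So there is nothing to compare against directly. Your approach---compose the homogeneous Strichartz estimate with the dual of Kato smoothing via a $TT^*$ argument to obtain the untruncated bilinear estimate, then invoke Christ--Kiselev to pass to the retarded integral---is the standard way such hybrid inequalities are established, and is correct in outline.

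The only place where your write-up is genuinely thin is the Christ--Kiselev step. The target space $X_{r,p}=l^{3r/2}\bigl(\Z;L^\infty_t([n,n+1],l^p)\bigr)$ is not of the form $L^q_t(Y)$, so the textbook lemma does not apply as stated. Your suggested fix is the right one but needs unpacking: decompose also the source norm as $L^2_t l^{2,\sigma}=l^2_m\bigl(L^2_{t\in[m,m+1]}l^{2,\sigma}\bigr)$, regard the integer time index as the ``time'' variable, and apply Christ--Kiselev at that coarse level (the exponents are $2<3r/2$ since $r\ge4$). This handles the off-diagonal blocks $m<n$. The diagonal block $m=n$, i.e.\ the piece $\int_n^t$ for $t\in[n,n+1]$, must be treated separately; it is harmless because $\|e^{-\im\tau H}P_c g(s)\|_{l^p}\le\|g(s)\|_{l^{2,\sigma}}$ for $p\ge2$, and the unit-interval integration followed by the embedding $l^2_n\hookrightarrow l^{3r/2}_n$ closes the estimate. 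With this clarification your proof is complete.
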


\section{Proof of Theorem \ref{thm:2}}\label{sec:proofthm2}

We are now in the position to prove Theorem \ref{thm:2} and Corollary \ref{cor:1}.

Fix $\sigma>1$ and set $X=X_{4,\infty}\cap X_{\infty,2}\cap L^2l^{2,-\sigma}$.
\begin{proposition}\label{prop:4}
Under the hypothesis of Theorem \ref{thm:2}, there exists $\epsilon_0>0$ s.t. if 
$\|u_0\|_{l^2}=\epsilon<\epsilon_0$, we have
\begin{align}
\|\eta\|_{X}&\lesssim \|\eta(0)\|_{l^2},\label{50}\\
\left\|\frac{d}{dt}|z_j|^2\right\|_{L^1}&\lesssim \epsilon^6\|\eta(0)\|_{l^2}^2,\quad j=1,2.\label{51}
\end{align}
\end{proposition}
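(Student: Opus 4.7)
The plan is a standard bootstrap/continuity argument based on Duhamel's formula for \eqref{40} and the linear estimates of Section \ref{sec:linest}.

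\emph{Step 1 (uniform bounds).} Mass conservation for \eqref{1} gives $\|u(t)\|_{l^2} \equiv \epsilon$, which by the coordinate equivalence \eqref{11} yields the a priori bound $|z_1(t)| + |z_2(t)| + \|\eta(t)\|_{l^2} \lesssim \epsilon$ on the maximal time interval on which the chart of Lemma \ref{lem:10} is defined.

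\emph{Step 2 (bootstrap setup).} Let $C_0$ denote the constant from the homogeneous estimates of Lemmas \ref{lem:l1}--\ref{lem:l3}. Introduce
\begin{align*}
T^\ast := \sup\{T>0 \,:\, \|\eta\|_{X([0,T])} \leq 2C_0 \|\eta(0)\|_{l^2}\},
\end{align*}
and aim to prove $T^\ast = +\infty$ once $\epsilon_0$ is sufficiently small. Writing Duhamel's formula for \eqref{40},
\begin{align*}
\eta(t) = e^{-\im t H}\eta(0) - \im \int_0^t e^{-\im(t-s)H} P_c\bigl(|\eta|^6\eta + \nabla_\eta \mathcal N\bigr)(s)\,ds,
\end{align*}
the homogeneous contribution to $\|\eta\|_X$ is $\leq C_0 \|\eta(0)\|_{l^2}$. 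The two nonlinear Duhamel contributions are handled separately.

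\emph{Step 3 (nonlinear estimates).} For the quasi-periodic coupling, the estimate \eqref{34} together with $|z|, \|\eta\|_{l^2} \lesssim \epsilon$ gives the pointwise-in-time bound $\|\nabla_\eta \mathcal N\|_{l^{2,\sigma}} \lesssim \epsilon^6 \|\eta\|_{l^{2,-\sigma}}$, so by Lemma \ref{lem:l3}
\begin{align*}
\left\|\int_0^t e^{-\im(t-s)H} P_c \nabla_\eta \mathcal N\, ds\right\|_X \lesssim \epsilon^6 \|\eta\|_{L^2 l^{2,-\sigma}} \lesssim \epsilon^6 \|\eta\|_X.
\end{align*}
For the pure-power term one uses that in one discrete dimension $\|\eta\|_{l^p(\Z)} \leq \|\eta\|_{l^2(\Z)}$ for $p \geq 2$, so $|\eta|^6 \eta$ is $l^2$-subcritical; a H\"older argument based on the $X_{4,\infty}$ and $X_{\infty,2}$ components of $X$ and the inhomogeneous Strichartz estimate of Lemma \ref{lem:l1} yield a contribution bounded by $\|\eta\|_X^7 \lesssim \epsilon^6 \|\eta\|_X$. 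Collecting gives $\|\eta\|_X \leq C_0 \|\eta(0)\|_{l^2} + C \epsilon^6 \|\eta\|_X$, which for $\epsilon_0$ small closes the bootstrap strictly, forcing $T^\ast = +\infty$ and proving \eqref{50}.

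\emph{Step 4 (modulation estimate).} From \eqref{41} and $\mathcal E_j\in\R$,
\begin{align*}
\frac{d}{dt}|z_j|^2 = 2\Re(R_j\overline{z_j}),
\end{align*}
since the $-\im \mathcal E_j |z_j|^2$ piece is purely imaginary. Since $R_j = \{z_j,\mathcal N\}$ inherits through \eqref{36} and \eqref{33} a quadratic-in-$\eta$ bound $|R_j|\lesssim (|z|^5+\|\eta\|_{l^2}^5)\|\eta\|_{l^{2,-\sigma}}^2$, the uniform bound $|z|\lesssim \epsilon$ from Step 1 gives $|R_j\overline{z_j}|\lesssim \epsilon^6 \|\eta\|_{l^{2,-\sigma}}^2$. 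Integrating in time and invoking \eqref{50},
\begin{align*}
\left\|\frac{d}{dt}|z_j|^2\right\|_{L^1} \lesssim \epsilon^6 \|\eta\|_{L^2 l^{2,-\sigma}}^2 \lesssim \epsilon^6 \|\eta(0)\|_{l^2}^2,
\end{align*}
which is \eqref{51}.

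\emph{Main obstacle.} The essential point is that both nonlinear contributions must close with an extra power of $\epsilon$. For the coupling $\nabla_\eta \mathcal N$ this requires that it be truly at least quadratic in $\eta$; this is precisely what is achieved by the Darboux change of coordinates of Section \ref{sec:darboux}, where the cancellation of Lemma \ref{lem:14} eliminates the would-be linear-in-$\eta$ forcing and produces the sharp bound \eqref{34}. Without this cancellation the modulation equation \eqref{41} would be driven by a non-integrable $O(\|\eta\|)$ term and neither \eqref{50} nor \eqref{51} could be closed. The remaining technicality is the choice of Strichartz admissible pairs compatible with the mixed space $X$ so that $|\eta|^6\eta$ also absorbs with a factor $\epsilon^6$; this is where the subcriticality of the discrete one-dimensional embedding $l^2\subset l^p$ plays a decisive role.
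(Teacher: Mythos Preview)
Your proposal is correct and follows essentially the same route as the paper's proof: Duhamel plus the Strichartz and Kato smoothing estimates of Section~\ref{sec:linest} to close a continuity argument for \eqref{50}, then the quadratic-in-$\eta$ bound \eqref{33} on $R_j$ to obtain \eqref{51}. The only omission is in Step~1, where \eqref{11} refers to the \emph{original} coordinates of Lemma~\ref{lem:10}, so one must also invoke \eqref{30.1} of Lemma~\ref{lem:13} to transfer the a~priori bound to the Darboux coordinates in which \eqref{40}--\eqref{41} are written, exactly as the paper does.
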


\begin{proof}
First, by the $l^2$ conservation of \eqref{1} and \eqref{30.1} of Lemma \ref{lem:13}, we have 
\begin{align*}
|z_1|+|z_2|+\|\eta\|_{l^2}&\lesssim \sum_{j=1,2}|z_j'|+|r_j(z_1,z_2,\eta)|+\|\eta\|_{l^2}+\|r_\eta(z_1,z_2,\eta)\|_{l^2}\\&
\lesssim \epsilon+|z|^6\|\eta\|_{l^2}.
\end{align*}
Therefore, we have
\begin{align}\label{52}
|z_1|+|z_2|+\|\eta\|_{l^2}\lesssim \epsilon,
\end{align}
for all time $t$.
By \eqref{40},  
for any admissible pair $(r,p)$, we have
\begin{align*}
\|\eta\|_{X_{r,p}}&\leq \|e^{-\im t H}\eta(0)\|_{X_{r,p}}+\|\int_0^t e^{-\im(t-s)H}P_c \nabla_\eta\mathcal N\,ds\|_{X_{r,p}}+\|\int_0^t e^{-\im(t-s)H}P_c \mathcal |\eta|^6\eta\,ds\|_{X_{r,p}}\\&
\lesssim \|\eta(0)\|_{l^2}+\|\nabla_\eta\mathcal N\|_{L^2l^{2,\sigma}}+\| |\eta|^6\eta \|_{L^1l^2}\\&
\lesssim \|\eta(0)\|_{l^2}+ \epsilon^6 \|\eta\|_{L^2l^{2,-\sigma}}+ \|\eta\|_{L^\infty l^2}\|\eta\|_{X_{4,\infty}}^6,
\end{align*}
where we have used Lemma \ref{lem:l1} and \ref{lem:l3} in the first inequality and \eqref{34} in the second inequality.
Again by \eqref{40} and Lemma \ref{lem:l1}, \ref{lem:l2}, we have
\begin{align*}
\|\eta\|_{L^2l^{2,-\sigma}}&\lesssim \|\eta(0)\|_{l^2} + \|\nabla_\eta\mathcal N\|_{L^2l^{2,\sigma}}+\int_0^\infty \||\eta|^7\|_{l^2}\,ds\\&
\lesssim \|\eta(0)\|_{l^2} + \epsilon^6\|\eta\|_{L^2l^{2,-\sigma}}+\|\eta\|_{X_{14/3,14}}^7,
\end{align*}
where we have use $\|\eta\|_{L^7l^{14}}\leq \|\eta\|_{X_{14/3,14}}$ in the second inequality.
Therefore, we have
\begin{align*}
\|\eta\|_X\lesssim \|\eta(0)\|_{l^2}+\epsilon^6\|\eta\|_X+\|\eta\|_X^7.
\end{align*}
By continuity argument, we have \eqref{50}.

Next, multiplying $\overline{z_j}$ to \eqref{41} and taking the real part, we have
\begin{align*}
\frac{d}{dt}|z_j|^2=R_j\overline{z_j}+\overline{R_j}z_j.
\end{align*}
Therefore, by \eqref{37}, we have
\begin{align*}
\|\frac{d}{dt}|z_j|^2\|_{L^1}\leq (\|z_1\|_{L^\infty}^6+\|z_2\|_{L^\infty}^6+\|\eta\|_{L^\infty l^2}^6)\|\eta\|_{L^2 l^{2,-\sigma}}^2\lesssim \epsilon^6\|\eta\|_{L^2 l^{2,-\sigma}}^2.
\end{align*}
Combining the above with \eqref{50}, we obtain \eqref{51}.
\end{proof}

We now prove Theorem \ref{thm:2}.
\begin{proof}[Proof of Theorem \ref{thm:2}]
By Proposition \ref{prop:4}, we see that there exists $\rho_{j,+}$ and $v_+$ s.t.
\begin{align*}
|z_j(t)|\to \rho_{j,+},\quad \mathrm{and}\quad \|\eta(t)-e^{\im t \Delta}\eta_+\|_{l^2}\to 0.
\end{align*}
with $\rho_{0,+}+\rho_{1,+}+\|v_+\|_{l^2}\lesssim \epsilon$.

Now, by Lemma \ref{lem:l0} we have $\|\eta(t)\|_{l_e^{-a}}\to 0$ for any $a>0$.
Therefore, by Lemma \ref{lem:13}, we see $\|\eta'(t)-\eta(t)\|_{l^2}\to 0$ and $\left||z_j(t)|-|z_j'(t)|\right|\to 0$ as $t\to \infty$.
Here, $(z'_1,z_2',\eta')$ are the original coordinates (see remark \ref{rem:3}).
Therefore, we have the conclusion.
\end{proof}

We next prove Corollary \ref{cor:1}.

\begin{proof}[Proof of Corollary \ref{cor:1}]
Fix $j\in \{1,2\}$ and fix $z_j\in \C$ with $|z_j|\ll1$.
Now, let $0\ll \epsilon\ll|z_j|$ and assume $\|u(0)-\phi_2(z_j)\|_{l^2}\lesssim \epsilon$, then we have $|z_{3-j}(0)|+|z_j-z_j(0)|+\|\eta(0)\|_{l^2}\lesssim \epsilon$.
Further, by Proposition \ref{prop:4}, we have
\begin{align*}
\sup_{t\geq 0}\(|z_{3-j}(t)|^2+\|\eta(t)\|_{l^2}\)\lesssim |z_{3-j}(0)|^2+\|\eta(0)\|_{l^2}^2,
\end{align*}
and
\begin{align*}
\sup_{t\geq 0}\(|z_j(t)|^2-|z_j|^2\)\lesssim |z_j(0)|^2-|z_j|^2+ |z_j|^6\|\eta(0)\|_{l^2}.
\end{align*}
Therefore, going back to the original coordinate, we have the conclusion.
\end{proof}

\appendix

\section{Proof of Proposition \ref{prop:1}}

In this section, we prove Proposition \ref{prop:1}.
Before proving Proposition \ref{prop:1}, we prepare an elementary estimate.
\begin{lemma}\label{lem:a1}
Let $\delta>0$.
Then there exists $a(\delta)>0$ s.t.\ for $a\in (0,a(\delta))$ and for
$\lambda\notin (-\delta,4+\delta)\cup (e_1-\delta, e_1+\delta)\cup (e_2-\delta,e_2+\delta)$,
we have
\begin{align}\label{a0}
\|(H-\lambda)^{-1}\|_{\mathcal L(l_e^a)}\lesssim_\delta \<\lambda\>^{-1}.
\end{align}
Further, let $j=1,2$.
Then, for sufficiently small $a>0$, we have
\begin{align}\label{a0.1}
\left\|\(\left.(H-e_j)\right|_{\phi_j^\perp}\)^{-1}\right\|_{\mathcal L(l_e^{a})}\lesssim 1.
\end{align}
\end{lemma}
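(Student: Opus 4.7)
The plan is to use a conjugation by the exponential weight and reduce everything to a perturbation problem on $l^2$. Let $M_a$ denote multiplication by $e^{a|n|}$, so that $\|u\|_{l_e^a} = \|M_a u\|_{l^2}$ and an operator $T$ is bounded on $l_e^a$ if and only if $M_a T M_a^{-1}$ is bounded on $l^2$, with equal norms. A direct calculation on the shift part of $\Delta$ (the multiplication operator $V$ commutes with $M_a$) shows that on $l^2$,
\begin{align*}
H_a := M_a H M_a^{-1} = H + K_a, \qquad \|K_a\|_{\mathcal L(l^2)} = O(a),
\end{align*}
since conjugation only rescales the off-diagonal shift coefficients by factors $e^{\pm a} = 1 + O(a)$.

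For the first estimate, the self-adjointness of $H$ on $l^2$ and $\sigma(H)=[0,4]\cup\{e_1,e_2\}$ give, via the spectral theorem,
$\|(H-\lambda)^{-1}\|_{\mathcal L(l^2)} = \mathrm{dist}(\lambda,\sigma(H))^{-1} \lesssim_\delta \langle\lambda\rangle^{-1}$
for every $\lambda$ lying at distance $\geq \delta$ from each spectral piece. I would then choose $a(\delta)$ small enough that $\|K_a (H-\lambda)^{-1}\|_{\mathcal L(l^2)}\leq 1/2$ uniformly in such $\lambda$ (using $\|K_a\|\lesssim a$ and that the resolvent is uniformly bounded by $1/\delta$ for those $\lambda$), so that a Neumann series yields $\|(H_a-\lambda)^{-1}\|_{\mathcal L(l^2)}\lesssim_\delta \langle\lambda\rangle^{-1}$. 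Undoing the conjugation gives \eqref{a0}.

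For the second estimate, the key identity is that $(H-e_j)\phi_j=0$ implies $(H_a-e_j)(M_a\phi_j)=0$; the eigenfunction $\phi_j$ decays exponentially by a Combes--Thomas / Agmon bound (its eigenvalue lies outside the essential spectrum $[0,4]$), hence $M_a\phi_j\in l^2$ for all sufficiently small $a$. Simplicity of $e_j$ as an eigenvalue of $H$ is preserved under the small perturbation $K_a$, so $H_a$ has a simple eigenvalue at $e_j$ with eigenfunction $M_a\phi_j$; the associated (non-orthogonal) spectral projection has the form $\tilde P_j v = (M_a\phi_j)\langle v, M_a^{-1}\phi_j\rangle$ up to normalization, where $M_a^{-1}\phi_j$ is the eigenfunction of $H_a^{*}=M_a^{-1} H M_a$ at $e_j$. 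The crucial pairing check is
\begin{align*}
\langle u,\phi_j\rangle = \langle M_a u, M_a^{-1}\phi_j\rangle,
\end{align*}
which shows that the condition $\langle u,\phi_j\rangle=0$ in $l_e^a$ matches exactly $\tilde P_j (M_a u)=0$ in $l^2$. Analytic perturbation theory for the isolated simple eigenvalue then gives $\|(H_a-e_j)^{-1}\|_{\mathcal L(\ker\tilde P_j)}\lesssim 1$ for $a$ small, because the unperturbed inverse $((H-e_j)|_{\phi_j^\perp})^{-1}$ is bounded on $l^2$. Translating back through $M_a$ produces \eqref{a0.1}.

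The main obstacle I expect is the non-self-adjoint bookkeeping in the second estimate: verifying that the spectral projection $\tilde P_j$ remains rank one and close to $P_j$, and, most importantly, that the dual identification $\tilde\psi_j = M_a^{-1}\phi_j$ aligns $l_e^a$-orthogonality to $\phi_j$ with membership in $\ker\tilde P_j$ after applying $M_a$. Once that compatibility is established, everything else is a routine Neumann-series argument, so the real content is the exponential decay of $\phi_j$ combined with the orthogonality matching.
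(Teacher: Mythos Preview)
Your approach is correct. For the first estimate it coincides with the paper's: conjugate by the exponential weight, observe that $H$ changes by an $O(a)$ bounded operator, and invert by Neumann series. The only cosmetic difference is that the paper introduces a truncated weight $T_{a,N}$ (multiplication by $e^{a\min(|n|,N)}$) so that every manipulation stays among bounded operators on $l^2$, and then lets $N\to\infty$; your direct use of $M_a$ as an isometry $l_e^a\to l^2$ is equivalent once one checks (as you do) that $H_a=M_aHM_a^{-1}$ is bounded on $l^2$.

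For the second estimate your route genuinely differs. The paper argues by hand: given $u\perp\phi_j$ with $(H-e_j)u=f\in l_e^a$, it decomposes $T_{a,N}u = QT_{a,N}u + \langle u,T_{a,N}\phi_j\rangle\phi_j$ (with $Q$ the $l^2$-projection off $\phi_j$), applies $H-e_j+B_{a,N}$, and bounds $\|QT_{a,N}u\|_{l^2}$ directly via the $l^2$-invertibility of $(H-e_j)|_{\phi_j^\perp}$, absorbing the $O(a)$ error terms and finally taking $N\to\infty$. You instead identify the Riesz projection of $H_a$ at $e_j$ and match its kernel with $M_a(\phi_j^\perp\cap l_e^a)$ via the pairing $\langle u,\phi_j\rangle=\langle M_au,M_a^{-1}\phi_j\rangle$, then quote boundedness of the reduced resolvent under small perturbation. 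Your argument is more structural and makes the dual eigenfunction $M_a^{-1}\phi_j$ explicit; the paper's is self-contained and avoids spectral-projection machinery. Both require the exponential decay of $\phi_j$ (so that $M_a\phi_j\in l^2$), which the paper also invokes.
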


\begin{proof}
Set $T_{a,N}$ by
\begin{align*}
(T_{a,N}v)(n)=e^{a \min(|n|,N)}v(n).
\end{align*}
We first claim there exists $B_{a,N}:l^2\to l^2$ s.t.\ $\|B_{a,N}\|_{l^2\to l^2}\lesssim a$ (the implicit constant do not depend on $N$) and
\begin{align*}
T_{a,N}(H-\lambda)T_{a,N}^{-1}=H-\lambda + B_{a,N}.
\end{align*}
Indeed, setting $B_{a,N}=T_{a,N}(-\Delta)T_{a,N}^{-1}+\Delta$, we have
\begin{align*}
(B_au)(n)=\(1-e^{a(\min(|n|,N)-\min(|n+1|,N))}\)u(n+1)+\(1-e^{a(\min(|n|,N)-\min(|n-1|,N))}\)u(n-1).
\end{align*}
Since $|1-e^{a(\min(|n|,|N|)-\min(|n+1|,N))}|\lesssim a$ and $|1-e^{a(\min(|n|,N)-\min(|n-1|,N))}|\lesssim a$, we have the desired bound for $B_{a,N}$.
Now, since
\begin{align*}
T_{a,N}(H-\lambda)^{-1}T_{a,N}^{-1}&=(T_{a,N}(H-\lambda)T_{a,N})^{-1}=(H-\lambda+B_{a,N})^{-1}\\&=(H-\lambda)^{-1}(1+(H-\lambda)^{-1}B_{a,N})^{-1}.
\end{align*}
Therefore, by Neumann expansion and since $\|(H-\lambda)^{-1}\|_{l^2\to l^2}\lesssim \delta^{-1}$, if we take $a>0$ sufficiently small s.t.\ $a \delta^{-1}\ll 1$, we have
\begin{align*}
\|T_{a,N}(H-\lambda)^{-1}T_{a,N}^{-1}\|_{\mathcal L(l^2)}\lesssim \|(H-\lambda)^{-1}\|\lesssim_\delta \lambda^{-1}
\end{align*}
This implies that for $u\in l_e^a$,
\begin{align*}
\|T_{a,N}(H-\lambda)^{-1}u\|_{l^2}\lesssim_\delta \lambda^{-1}\| T_{a,N}u\|_{l^2}\leq \lambda^{-1}\|u\|_{l_e^a}.
\end{align*}
Taking $N\to \infty$, we obtain \eqref{a0}.

Next, we prove \eqref{a0.1}.
Suppose $u,f\perp \phi_j$ and $(H-e_j)u=f$, $u\in l^2$, $f\in l_e^{a}$.
Set $P:=\<\cdot,\phi_j\>\phi_j$ and $Q=1-P$.
Now, we have
\begin{align*}
T_{a,N}f=(H-e_j+B_{a,N})T_{a,N}u=(H-e_j+B_{a,N})(QT_{a,N}u+\<u,T_{a,N}\phi\>\phi).
\end{align*}
Therefore, we have
\begin{align*}
(H-e_j)QT_{a,N}u=T_{a,N}f-B_{a,N}QT_{a,N}u-\<u,T_{a,N}\phi\>B_{a,N}\phi.
\end{align*}
Now, by $f,\phi\in l_e^a$, where $a>0$ is sufficiently small so that $\phi_j\in l_e^a$, we have
\begin{align*}
\|QT_{a,N}u\|_{l^2}\lesssim \|f\|_{l_e^a}+a\|QT_{a,N}u\|_{l^2}+a\|u\|_{l^2}.
\end{align*}
Thus, for $a$ sufficiently small,
\begin{align*}
\|QT_{a,N}u\|_{l^2}\lesssim \|f\|_{l_e^a}+a\|u\|_{l^2},
\end{align*}
and
\begin{align*}
\|T_{a,N}u\|_{l^2}\leq \|QT_{a,N}u\|_{l^2}+\|P T_{a,N}u\|_{l^2}\lesssim \|f\|_{l_e^a}+\|u\|_{l^2}.
\end{align*}
Finally, taking $N\to \infty$, we have
\begin{align*}
\|u\|_{l_e^2}\lesssim \|f\|_{l_2^a},
\end{align*}
where we have used the fact that $\|u\|_{l^2}\lesssim \|f\|_{l^2}\leq \|f\|_{l_e^a}$.
\end{proof}

We now prove Proposition \ref{prop:1}.

\begin{proof}[Proof of Proposition \ref{prop:1}]
For simplicity, we write $\phi_j$ as $\phi$, $e_j$ as $e$ and $E_j$ as $E$.
Consider a solution in the form $z(\phi+q(|z|^2))$ with real valued $q$ with $\<\phi, q\>=0$.
Now, substitute it in the equation and we have
\begin{align*}
Hq + |z|^6 |\phi + q|^6 (\phi + q) = e q + (E-e)(\phi + q).
\end{align*}
Then, we have
\begin{align*}
&(|z|^6 |\phi + q|^6 (\phi + q) ,\phi) = E-e,\\&
Hq + Q\(|z|^6 |\phi + q|^6 (\phi + q)\) = E q.
\end{align*}
Therefore, we set
\begin{align}\label{a1}
E(|z|^2,q):=e+|z|^6\< |\phi + q|^6 (\phi + q) ,\phi\>, 
\end{align}
and we have
\begin{align*}
(H-e)q&= \(E(z,q)-e\)q-Q\(|z|^6 |\phi + q|^6 (\phi + q)\)\\&
=|z|^6\(f(q),\phi\)q-|z|^6 Q f(q),
\end{align*}
where $f(q)=|\phi + q|^6 (\phi + q)$.
We set $\mathcal F:Q l_e^a\times \R \to Q l_e^a$ by
\begin{align*}
\mathcal F(q, s):=(H-e)q -s^3\(f(q),\phi\)q+s^3 Q f(q).
\end{align*}
Then, $\mathcal F$ is real analytic with respect to $q$ and $s$.
Further, since
\begin{align*}
\left.D_{q}\mathcal F(q, s)\right|_{(q, s)=(0,0)}=H-q
\end{align*}
is invertible in $Q l_e^a$ for sufficiently small $a>0$, by implicit function theorem, for sufficiently small $s$, there exists $q(s)$ s.t. $q (s)$ is real analytic with respect to $s$ and $\mathcal F(q(s),s)=0$.
Further, comparing the Taylor series of
\begin{align*}
q(s) =s^3(H-e)^{-1}\((f(q),\phi)-Qf(q)\),
\end{align*}
we see $\|q(s)\|_{l_e^a}\lesssim s^3$.
Therefore, $q(|z|^2)$ is the desired solution.
Finally, set $E(s)=E(s,q(s))$, where the r.h.s.\ is given in \eqref{a1}.
Then, since $E(s,q)$ and $q$ is both real analytic, $E(s)$ also becomes real analytic.
The estimate $|E(|z|^2)-e|\lesssim |z|^6$ also follows from \eqref{a1}.


\end{proof}

\section{Proof of Lemma \ref{lem:2.0}}
\begin{proof}[Proof of Lemma \ref{lem:2.0}]
Set
$\mathbf v^k=\{v_{jm}^k\}_{j=1,2,m\geq 0}$, ($k=1,2,3$).
Then, using the relation \eqref{6.03}, we have
\begin{align*}
M_{1m}=
&\sum_{\substack{l\geq 0\\ l+m\geq m_1\geq 0}}|z_1|^{2l+2}|z_2|^{2l}v_{1m_1}^1v_{1l}^2v_{1(l+m-m_1)}^3
+\sum_{l-1\geq m_2\geq 0}|z_1|^{2l}|z_2|^{2l}v^1_{1(m+l)}v^2_{1m_2}v^3_{2(l-m_2-1)}\\&
+\sum_{\substack{m_2,m_3\geq 0\\ m_2+m_3\leq m-1}}v^1_{1(m-m_2-m_3-1)}v^2_{2m_2}v^3_{1m_3}
+\sum_{\substack{l\geq 0\\l+m\geq m_1\geq 0}}|z_1|^{2l}|z_2|^{2l+2}v^1_{1m_1}v^2_{2(l+m-m_1)}v^3_{2l}\\&
+\sum_{l-1\geq m_1\geq 0}|z_1|^{2l}|z_2|^{2l}v^1_{2m_1}v^2_{1(l-m_1-1)}v^3_{1(m+l)}
+\sum_{\substack{l\geq 0\\l+m\geq m_2\geq 0}}|z_1|^{2l}|z_2|^{2l+2}v^1_{2l}v^2_{2m_2}v^3_{1(l+m-m_2)}\\&
+\sum_{l\geq m_1\geq 0}|z_1|^{2l}|z_2|^{2l+4}v^1_{2m_1}v^2_{2(m+l+1)}v^3_{2(l-m_1)},
\end{align*}
and
\begin{align*}
M_{2m}=&
\sum_{l\geq m_1\geq 0}|z_1|^{2l+4}|z_2|^{2l}v^1_{1m_1}v^2_{1(m+l+1)}v^3_{1(l-m_1)}
+\sum_{\substack{l\geq0\\m+l\geq m_2\geq 0}}|z_1|^{2l+2}|z_2|^{2l}v^1_{1l}v^2_{1m_2}v^3_{2(m+l-m_2)}\\&
+\sum_{l-1\geq m_1\geq 0}|z_1|^{2l}|z_2|^{2l}v^1_{1m_1}v^2_{2(l-m_1-1)}v^3_{2(m+l)}
+\sum_{\substack{l\geq 0\\l+m\geq m_1\geq 0}}|z_1|^{2l+2}|z_2|^{2l}v^1_{2m_1}v^2_{1(l+m-m_1)}v^3_{1l}\\&
+\sum_{\substack{m_2,m_3\geq 0\\m_2+m_3\leq m+1}}v^1_{2(m-m_2-m_3-1)}v^2_{1m_2}v^3_{2m_3}
+\sum_{l-1\geq m_2\geq 0}|z_1|^{2l}|z_2|^{2l}v^1_{2(m+l)}v^2_{2m_2}v^3_{1(l-m_2-1)}\\&
+\sum_{\substack{l\geq 0\\l+m\geq m_1\geq 0}}|z_1|^{2l}|z_2|^{2l+2}v^1_{2m_1}v^2_{2l}v^3_{2(l+m-m_1)}.
\end{align*}
Therefore, we can express $\mathcal M$ such as
\begin{align*}
&\mathcal M(|z_1|^2,|z_2|^2,\mathbf v^1,\mathbf v^2,\mathbf v^3)=\tilde {\mathbf m}^{00}(\mathbf v^1,\mathbf v^2,\mathbf v^3)+\sum_{l\geq 0}\(|z_1|^{2(l+2)}|z_2|^{2l}\mathbf m^{(l+2)l}(\mathbf v^1,\mathbf v^2,\mathbf v^3)\right.\\&\quad\left.+|z_1|^{2(l+1)}|z_2|^{2l}\mathbf m^{(l+1)l}(\mathbf v^1,\mathbf v^2,\mathbf v^3)+|z_1|^{2l}|z_2|^{2l}\mathbf m^{ll}(\mathbf v^1,\mathbf v^2,\mathbf v^3)\right.\\&\left.\quad +|z_1|^{2l}|z_2|^{2(l+1)}\mathbf m^{l(l+1)}(\mathbf v^1,\mathbf v^2,\mathbf v^3)+|z_1|^{2l}|z_2|^{2(l+2)}\mathbf m^{l(l+2)}(\mathbf v^1,\mathbf v^2,\mathbf v^3)\),
\end{align*}
where $\tilde{\mathbf m}^{00}=\{\tilde m^{00}_{jm}\}_{j=1,2,m\geq 0}$ and $\mathbf m^{l_1l_2}=\{m^{l_1l_2}_{jm}\}_{j=1,2,m\geq 0}$ are given by
\begin{align*}
&\tilde m^{00}_{1m}=\sum_{\substack{m_2,m_3\geq 0\\ m_2+m_3\leq m-1}}v^1_{1(m-m_2-m_3-1)}v^2_{2m_2}v^3_{1m_3},\quad \tilde m^{00}_{2m}=\sum_{\substack{m_2,m_3\geq 0\\m_2+m_3\leq m+1}}v^1_{2(m-m_2-m_3-1)}v^2_{1m_2}v^3_{2m_3},
\end{align*}
and
\begin{align*}
&m^{(l+2)l}_{1m}=0,\quad
m^{(l+2)l}_{2m}=\sum_{l\geq m_1\geq 0}v^1_{1m_1}v^2_{1(m+l+1)}v^3_{1(l-m_1)},\quad
m^{(l+1)l}_{1m}=\sum_{\substack{l\geq 0\\ l+m\geq m_1\geq 0}}v_{1m_1}^1v_{1l}^2v_{1(l+m-m_1)}^3,\\&
m^{(l+1)l}_{2m}=
\sum_{\substack{l\geq0\\m+l\geq m_2\geq 0}}v^1_{1l}v^2_{1m_2}v^3_{2(m+l-m_2)}
+\sum_{\substack{l\geq 0\\l+m\geq m_1\geq 0}}v^1_{2m_1}v^2_{1(l+m-m_1)}v^3_{1l},\\
&m^{ll}_{1m}=\sum_{l-1\geq m_2\geq 0}v^1_{1(m+l)}v^2_{1m_2}v^3_{2(l-m_2-1)}+
\sum_{l-1\geq m_1\geq 0}v^1_{2m_1}v^2_{1(l-m_1-1)}v^3_{1(m+l)},\\&
m^{ll}_{2m}=
\sum_{l-1\geq m_1\geq 0}v^1_{1m_1}v^2_{2(l-m_1-1)}v^3_{2(m+l)}
+\sum_{l-1\geq m_2\geq 0}v^1_{2(m+l)}v^2_{2m_2}v^3_{1(l-m_2-1)},\\&
m^{l(l+1)}_{1m}=\sum_{\substack{l\geq 0\\l+m\geq m_1\geq 0}}v^1_{1m_1}v^2_{2(l+m-m_1)}v^3_{2l}
+\sum_{\substack{l\geq 0\\l+m\geq m_2\geq 0}}v^1_{2l}v^2_{2m_2}v^3_{1(l+m-m_2)},\\&
m^{l(l+1)}_{2m}=\sum_{\substack{l\geq 0\\l+m\geq m_1\geq 0}}v^1_{2m_1}v^2_{2l}v^3_{2(l+m-m_1)},\quad
m^{l(l+2)}_{1m}=\sum_{l\geq m_1\geq 0}v^1_{2m_1}v^2_{2(m+l+1)}v^3_{2(l-m_1)},\quad
m^{l(l+2)}_{2m}=0.
\end{align*}
Now, we can estimate $\|\tilde {\mathbf m}^{00}\|_{ar}$ as
\begin{align*}
&\|\tilde{\mathbf m}^{00}(\mathbf v_1,\mathbf v_2,\mathbf v_3)\|_{ar}=\sum_{j=1,2,m\geq 0}r^{2m+1}\|\tilde m^{00}_{jm}(\mathbf v_1,\mathbf v_2,\mathbf v_3)\|_{l_e^a}\leq 
\sum_{j=1,2,m\geq 0}r^{2m+1}\\&\times\(\sum_{\substack{m_2,m_3\geq 0\\ m_2+m_3\leq m-1}}\|v^1_{1(m-m_2-m_3-1)}v^2_{2m_2}v^3_{1m_3}\|_{l_e^a}+\sum_{\substack{m_2,m_3\geq 0\\m_2+m_3\leq m+1}}\|v^1_{2(m-m_2-m_3-1)}v^2_{1m_2}v^3_{2m_3}\|_{l_e^a}\)\\&\leq \|\mathbf v^1\|_{ar}\|\mathbf v^2\|_{ar}\|\mathbf v^3\|_{ar}.
\end{align*}
Similarly, we have
\begin{align*}
\|\mathbf m^{(l+2)l}(\mathbf v_1,\mathbf v_2,\mathbf v_3)\|_{ar}&\leq r^{-(4l+4)} \|\mathbf v^1\|_{ar}\|\mathbf v^2\|_{ar}\|\mathbf v^3\|_{ar},\\
\|\mathbf m^{(l+1)l}(\mathbf v_1,\mathbf v_2,\mathbf v_3)\|_{ar}&\leq 3 r^{-(4l+2)} \|\mathbf v^1\|_{ar}\|\mathbf v^2\|_{ar}\|\mathbf v^3\|_{ar},\\
\|\mathbf m^{ll}(\mathbf v_1,\mathbf v_2,\mathbf v_3)\|_{ar}&\leq 4 r^{-4l} \|\mathbf v^1\|_{ar}\|\mathbf v^2\|_{ar}\|\mathbf v^3\|_{ar},\\
\|\mathbf m^{l(l+1)}(\mathbf v_1,\mathbf v_2,\mathbf v_3)\|_{ar}&\leq 3 r^{-(4l+2)} \|\mathbf v^1\|_{ar}\|\mathbf v^2\|_{ar}\|\mathbf v^3\|_{ar},\\
\|\mathbf m^{l(l+2)}(\mathbf v_1,\mathbf v_2,\mathbf v_3)\|_{ar}&\leq  r^{-(4l+4)} \|\mathbf v^1\|_{ar}\|\mathbf v^2\|_{ar}\|\mathbf v^3\|_{ar}.
\end{align*}
Therefore, we see that $\mathcal M$ uniformly converges in $\mathcal L^3(X_{ar};X_{ar})$ if $|z|\leq \delta<r$.
Thus, we have the conclusion of Lemma \ref{lem:2.0} for $k=1$.
The cases $k\geq 2$ follow from the inductive definition of $\mathcal M_{2k+1}$.
\end{proof}

\subsection*{Acknowledgments}   
The author appreciate S.\ Cuccagna for helpful comments.
The author also would like to thank D.\ Pelinovsky and D.\ Bambusi for pointing out related works.
The author was supported by the Japan Society for the Promotion of Science (JSPS) with the Grant-in-Aid for Young Scientists (B) 15K17568.

%

Department of Mathematics and Informatics,
Faculty of Science,
Chiba University,
Chiba 263-8522, Japan

{\it E-mail Address}: {\tt maeda@math.s.chiba-u.ac.jp}

\end{document}